\DeclareSymbolFont{largesymbol}{OMX}{yhex}{m}{n}
\DeclareMathAccent{\Widehat}{\mathord}{largesymbol}{"62}
\definecolor{verylight}{gray}{0.97}
\definecolor{light}{gray}{0.9}
\definecolor{medium}{gray}{0.85}
\definecolor{dark}{gray}{0.6}
 \def\G{{\mathcal G}}
 \def\supp{{\mathcal supp}}
 \def\opn#1#2{\def#1{\operatorname{#2}}} 
 \opn\chara{char} \opn\length{\ell} \opn\pd{pd} \opn\rk{rk}
 \opn\projdim{proj\,dim} \opn\injdim{inj\,dim} \opn\rank{rank}
 \opn\depth{depth} \opn\grade{grade} \opn\height{height}
 \opn\embdim{emb\,dim} \opn\codim{codim}
 \opn\Tr{Tr} \opn\bigrank{big\,rank}
 \opn\superheight{superheight}\opn\lcm{lcm}
 \opn\trdeg{tr\,deg}
 \opn\reg{reg} \opn\lreg{lreg} \opn\ini{in} \opn\lpd{lpd}
 \opn\size{size} \opn\sdepth{sdepth}
 \opn\link{link}\opn\fdepth{fdepth}\opn\lex{lex}
 \opn\tr{tr}
 \opn\type{type}
 \opn\Borel{Borel}
\opn\cdeg{cdeg}
 \opn\div{div} \opn\Div{Div} \opn\cl{cl} \opn\Cl{Cl}
 \opn\Spec{Spec} \opn\Supp{Supp} \opn\supp{supp} \opn\Sing{Sing}
 \opn\Ass{Ass} \opn\Min{Min}\opn\Mon{Mon}
 \opn\Ann{Ann} \opn\Rad{Rad} \opn\Soc{Soc}
 \opn\Im{Im} \opn\Ker{Ker} \opn\Coker{Coker} \opn\Am{Am}
 \opn\Hom{Hom} \opn\Tor{Tor} \opn\Ext{Ext} \opn\End{End}
 \opn\Aut{Aut} \opn\id{id}
 \opn\nat{nat}
 \opn\pff{pf}
 \opn\Pf{Pf} \opn\GL{GL} \opn\SL{SL} \opn\mod{mod} \opn\ord{ord}
 \opn\Gin{Gin} \opn\Hilb{Hilb}\opn\sort{sort}
 \opn\PF{PF}\opn\Ap{Ap}
 \opn\aff{aff} \opn
\opn\relint{relint} \opn\st{st}
 \opn\lk{lk} \opn\cn{cn} \opn\core{core} \opn\vol{vol}  \opn\inp{inp} \opn\nilpot{nilpot}
 \opn\link{link} \opn\star{star}\opn\lex{lex}\opn\set{set}
 \opn\width{wd}
 \opn\Fr{F}
 \opn\QF{QF}
 \opn\G{G}
 \opn\type{type}\opn\res{res}
 \opn\gr{gr}
  \def\cdeg{deg}
 \def\pot#1#2{#1[\kern-0.28ex[#2]\kern-0.28ex]}
 \opn\dirlim{\underrightarrow{\lim}}
 \opn\inivlim{\underleftarrow{\lim}}
 \def\Implies{\ifmmode\Longrightarrow \else
         \unskip${}\Longrightarrow{}$\ignorespaces\fi}
 \def\implies{\ifmmode\Rightarrow \else
         \unskip${}\Rightarrow{}$\ignorespaces\fi}
 \def\iff{\ifmmode\Longleftrightarrow \else
         \unskip${}\Longleftrightarrow{}$\ignorespaces\fi}
 \newtheorem{Theorem}{Theorem}[section]
 \newtheorem{Lemma}[Theorem]{Lemma}
 \newtheorem{Corollary}[Theorem]{Corollary}
 \newtheorem{Proposition}[Theorem]{Proposition}
 \newtheorem{Example}[Theorem]{Example}
 \newtheorem{Definition}[Theorem]{Definition}
 \let\epsilon\varepsilon
 \let\kappa=\varkappa
 \def\qed{\ifhmode\textqed\fi
       \ifmmode\ifinner\quad\qedsymbol\else\dispqed\fi\fi}
 \def\textqed{\unskip\nobreak\penalty50
        \hskip2em\hbox{}\nobreak\hfil\qedsymbol
        \parfillskip=0pt \finalhyphendemerits=0}
 \def\dispqed{\rlap{\qquad\qedsymbol}}
 \opn\dis{dis}
 \def\pnt{{\raise0.5mm\hbox{\large\bf.}}}
 \opn\Lex{Lex}
\begin{document}
\title {Regularity of  powers of edge ideals of vertex-weighted  oriented unicyclic graphs}

\author {Guangjun Zhu$^{^*}$\!\!\!,  Hong Wang,  Li Xu and Jiaqi Zhang }

\address{Authors¡¯ address:  School of Mathematical Sciences, Soochow
University, Suzhou 215006, P.R. China}
\email{zhuguangjun@suda.edu.cn(Corresponding author:Guangjun Zhu),
\linebreak[4] 1240470845@qq.com(Li Xu), 651634806@qq.com(Hong Wang),\nolinebreak[2] zjq7758258@vip.qq.com(Jiaqi Zhang).}

\dedicatory{ }

\begin{abstract}
In this paper we provide some exact formulas for the  regularity of powers
of edge ideals of vertex-weighted oriented cycles and  vertex-weighted  unicyclic graphs. These formulas are  functions of the weight of  vertices and the number of edges. We  also give some examples to show that these  formulas are related to direction selection and
 the weight of vertices.
\end{abstract}

\thanks{* Corresponding author}

\subjclass[2010]{ Primary: 13F20; Secondary 05C20, 05C22, 05E40.}


\keywords{regularity, edge ideal, vertex-weighted oriented cycles, vertex-weighted unicyclic graphs}

\maketitle

\setcounter{tocdepth}{1}

\section{Introduction}

A {\em directed graph} or {\em digraph} $D$ consists of a finite set $V(D)$ of vertices, together
with a collection $E(D)$ of ordered pairs of distinct points called edges or
arrows. If $\{u,v\}\in E(D)$ is an edge, we write $uv$ for $\{u,v\}$, which is denoted to be the directed edge
where the direction is from $u$ to $v$ and $u$ (resp. $v$) is called the {\em starting}  point (resp. the {\em ending} point).
Given any digraph $D$, we can associate a graph $G$ on the same vertex set
simply by replacing each arrow by an edge with the same ends. This graph is called the
underlying graph of $D$, denoted by $G(D)$. Conversely, any graph $G$ can be regarded as
a digraph, by replacing each of its edges by just one of the  two oppositely oriented arrows with the same ends.
 Such a digraph is called an {\em  orientation} of $G$.
An orientation of a simple graph is referred to as an {\em simple  oriented}  graph.

Edge ideals of edge-weighted graphs were introduced and studied by Paulsen
and Sather-Wagstaff \cite{PS}. In this work we consider edge ideals of graphs which are
oriented and have weights on the vertices. In what follows by a weighted oriented
graph we shall always mean a vertex-weighted oriented graph.

 A vertex-weighted oriented graph is a triplet $D=(V(D), E(D),w)$, where $V(D)$ is the  vertex set,
$E(D)$ is the edge set and $w$ is a weight function $w: V(D)\rightarrow \mathbb{N}^{+}$, where $N^{+}=\{1,2,\ldots\}$.
Some times for short we denote the vertex set $V(D)$ and edge set $E(D)$
by $V$ and $E$ respectively.
The weight of $x_i\in V$ is $w(x_i)$, denoted by $w_i$  or $w_{x_i}$.
The edge ideal of a vertex-weighted digraph was first introduced by Gimenez et al \cite{GBSVV}. Let $D=(V,E,w)$ be a  vertex-weighted digraph with the vertex set $V=\{x_{1},\ldots,x_{n}\}$. We consider the polynomial ring $S=k[x_{1},\dots, x_{n}]$ in $n$ variables over a field $k$. The edge ideal of $D$,  denoted by $I(D)$, is the ideal of $S$ given by
$$I(D)=(x_ix_j^{w_j}\mid  x_ix_j\in E).$$

Edge ideals of weighted digraphs arose in the theory of Reed-Muller codes as initial ideals of vanishing ideals
of projective spaces over finite fields \cite{MPV,PS}.
If a vertex $x_i$ of $D$ is a source (i.e., has only arrows leaving $x_i$) we shall always
assume $w_i=1$ because in this case the definition of $I(D)$ does not depend on the
weight of $x_i$. If  $w_j=1$ for all $j$, then $I(D)$ is the edge ideal of its underlying graph.

Our motivation to study the regularity of powers of edge ideals springs from a famous result:
for  any homogeneous ideal $I$ in a polynomial ring, it is well known that the regularity of $I^t$ is asymptotically a linear function in $t$, that is, there exist constants $a$ and $b$ such that for all $t\gg 0$, $\mbox{reg}\,(I^t)=at+b$ (see \cite{CHT}). Generally, the problem of finding the exact linear form $at+b$ and the
smallest value $t_0$ such that $\mbox{reg}\,(I^t)=at+b$ for all $t\geq t_ 0$ has proved to be very difficult.  There are few classes of graphs for which $a$, $b$ and $t_0$ are explicitly computed (see \cite{AB,ABS,B1,BBH1,BHT,MSY}).

Our objective in this paper is to find $a$, $b$ and $t_0$ in terms of combinatorial
invariants of the vertex-weighted digraph $D$ when $D$ is a vertex-weighted oriented  unicyclic graph.
 The digraph $D=(V(D),E(D),w)$ is called  an oriented  unicyclic graph, denoted by $D=C_m\cup(\bigcup\limits_{j=1}^{s}T_j)$, if its underlying graph is $G=G_0\cup(\bigcup\limits_{j=1}^{s}G_j)$,  and $C_m$ is  an oriented cycle with underlying graph $G_0$ and $T_j$ is an oriented tree with  underlying graph $G_j$, its  orientation  is as follows:  if $V(G_0)\cap V(G_j)=\{x_{i_j}\}$, then $x_{i_j}$ is the root of $T_j$, and  all edges in $T_j$ are oriented away from $x_{i_j}$ for $1\leq j\leq s$.
In \cite{Z4}, the first three authors  derive some exact formulas for the  regularity of  edge ideals of vertex-weighted rooted forests and oriented cycles.
In \cite{Z5}, we provide some exact formulas for the  regularity of   powers of  edge ideals of vertex-weighted rooted forests.
To the best of our knowledge,  few papers consider the regularity of $I(D)^t$ for a  vertex-weighted digraph.

In this article, we are interested in algebraic properties corresponding to the   regularity of $I(D)^t$ for some  vertex-weighted oriented graphs. By using the approaches of Betti splitting and polarization, we derive some exact formulas  for the  regularity  of powers of edge ideals of some directed graphs.
The results are as follows:

\begin{Theorem}
Let $C_n=(V(C_n),E(C_n),w)$ be a vertex-weighted oriented cycle with $w(x)\geq 2$ for any $x\in V(C_n)$,   then  for any $t\geq 1$
$$\mbox{reg}\,(I(C_n)^{t})=\sum\limits_{x\in V(C_n)}w(x)-|E(C_n)|+1+(t-1)(w+1)$$
where $w=\mbox{max}\,\{w(x)\mid x\in V(C_n)\}$.
\end{Theorem}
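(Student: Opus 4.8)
The plan is to compute $\reg(I(C_n)^t)$ by induction on $t$, using a Betti splitting of $I(C_n)^t$ together with the base case $t=1$, which is the formula for $\reg I(C_n)$ established in \cite{Z4}. First I would fix a notation: write $C_n$ with vertices $x_1,\dots,x_n$ and edges arranged cyclically, so that $I(C_n)=(x_1x_2^{w_2},x_2x_3^{w_3},\dots,x_{n-1}x_n^{w_n},x_nx_1^{w_1})$ (up to relabelling, depending on the chosen orientation). The key structural fact I want to exploit is that deleting one edge from $C_n$ produces an oriented path, and the edge ideal of a weighted oriented path is the edge ideal of a weighted rooted forest, whose powers have regularity already computed in \cite{Z5}. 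So the strategy is to pick a convenient generator $f=x_nx_1^{w_1}$ of $I(C_n)$ and write, for each $t\ge 2$,
\[
I(C_n)^t = f\cdot I(C_n)^{t-1} + J^t + (\text{mixed terms}),
\]
more precisely to set $J=I(P)$ where $P=C_n\setminus\{x_nx_1\}$ is the weighted oriented path on $x_1,\dots,x_n$, and to realize $I(C_n)^t = f I(C_n)^{t-1}+J^t$ as a sum of two ideals. Then I would invoke the Betti-splitting machinery: if $I=I_1+I_2$ is a Betti splitting, then
\[
\reg(I)=\max\{\reg(I_1),\ \reg(I_2),\ \reg(I_1\cap I_2)+1\}.
\]
The main work is to verify that this decomposition is indeed a Betti splitting (e.g.\ via the sufficient criterion that both $I_1$ and $I_2$ are generated by subsets of a minimal generating set with compatible monomial structure, or by a direct Tor computation), and to identify $I_1\cap I_2$ explicitly as a monomial ideal amenable to regularity estimates.

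Carrying this out, $I_1=fI(C_n)^{t-1}$ has regularity $w(x_n)+w_1-1+\reg(I(C_n)^{t-1})$ (a shift by the degree of $f$, which is $w(x_n)+w_1$, minus $1$... more carefully, $\reg(gL)=\deg g+\reg(L)$ for $L$ an ideal and $g$ a monomial, so $\reg(I_1)=w(x_n)+w_1+\reg(I(C_n)^{t-1})-$ adjust for the ideal-vs-module convention), so by the induction hypothesis this contributes a term that must be reconciled with the target formula. The ideal $I_2=J^t=I(P)^t$ has regularity given by the rooted-forest formula of \cite{Z5}, namely $\sum_{x\in V(P)}w_P(x)-|E(P)|+1+(t-1)(w+1)$ where $w_P$ is the induced weight (here $w_P(x)=w(x)$ for all $x$ except possibly the need to set the source of $P$ to weight $1$ — and since every $w(x)\ge 2$, one must check the source-weight convention carefully, as $|E(P)|=n-1=|E(C_n)|-1$). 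Finally $I_1\cap I_2$: since $f=x_nx_1^{w_1}$ and $J$ does not involve the edge $x_nx_1$, a colon/intersection computation should give $I_1\cap I_2 = f\cdot(I(C_n)^{t-1}\cap (J^t:f^\infty)\cdots)$, and I expect it to simplify to something like $x_nx_1^{w_1}\cdot J^{t-1}\cdot(\text{small ideal})$, whose regularity is again controlled by the rooted-forest formula plus the degree shift. One then checks that $\reg(I_1\cap I_2)+1$ does not exceed the claimed maximum.

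The hard part will be the bookkeeping in the intersection $I_1\cap I_2$ and in verifying the Betti-splitting hypothesis uniformly in $t$: monomial ideal intersections of powers are delicate, and one must show the "$+1$" shift from the $I_1\cap I_2$ term is always dominated by (or exactly matches) the contributions of $\reg I_1$ and $\reg I_2$, so that the induction closes to give precisely $\sum_{x\in V(C_n)}w(x)-|E(C_n)|+1+(t-1)(w+1)$. A secondary subtlety is the hypothesis $w(x)\ge 2$ for all $x$: this is exactly what forces there to be no source of weight $1$ creating a degenerate edge, and it must be used both in the base case (citing the corresponding hypothesis in \cite{Z4}) and in controlling the regularity of the auxiliary path ideals. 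I would also keep polarization in reserve: replacing each $x_j^{w_j}$ by a product of distinct new variables turns $I(C_n)$ into a squarefree monomial ideal (the edge ideal of a graph built from $C_n$ by adding whiskers/paths), for which combinatorial-topological tools (Hochster's formula, the Lyubeznik/Terai-type bounds, or known regularity results for powers of edge ideals of such graphs) can be applied, and regularity is preserved under polarization; this gives an independent route to the upper bound if the Betti-splitting estimate for $I_1\cap I_2$ proves too coarse.
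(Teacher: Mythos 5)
Your overall flavor (polarization, Betti splitting, the known formulas for $t=1$ and for powers of rooted forests) matches the toolkit of the paper, but the specific decomposition you propose is not the paper's and, as written, the two load-bearing steps are missing. First, the splitting $I(C_n)^t=f\,I(C_n)^{t-1}+I(P)^t$ with $f=x_nx_1^{w_1}$ is never shown to be a Betti splitting, and the only criterion available in this circle of ideas (Francisco--H\`a--Van Tuyl: one summand contains \emph{all} generators divisible by a fixed variable and has a \emph{linear resolution}) does not apply to it: neither $f\,I(C_n)^{t-1}$ nor $I(P)^t$ has a linear resolution, and every variable dividing the generators of $f\,I(C_n)^{t-1}$ (namely $x_1,x_n$, or their polarized copies) also divides generators of $I(P)^t$, so even after polarization the hypothesis fails; ``a direct Tor computation'' is not sketched and is exactly the hard point. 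Without the splitting you only get the upper bound $\reg(I)\leq\max\{\reg(I_1),\reg(I_2),\reg(I_1\cap I_2)-1\}$ (note the sign: it is $\reg(I_1\cap I_2)-1$, not $+1$ as you wrote), and no lower bound at all. Second, the intersection $f\,I(C_n)^{t-1}\cap I(P)^t$ is not computed; it is not of the guessed form $f\cdot I(P)^{t-1}\cdot(\text{small ideal})$, and intersections of powers of two non-principal monomial ideals are precisely the kind of object this strategy was supposed to avoid. There is also a quieter gap: your induction only closes if the deleted edge is chosen so that its head has maximal weight ($w_1=w$), since otherwise $\reg(f\,I(C_n)^{t-1})=(w_1+1)+\reg(I(C_n)^{t-1})$ falls strictly below the target and the exact value would have to come from the uncomputed intersection term; you never make this choice.

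For contrast, the paper splits off only the single generator $K=(x_n^tx_1^{tw_1})$ (with $w_1=w$ after relabeling) from the polarization of $I(C_n)^t$: then $K^{\mathcal P}$ is principal, hence has a linear resolution, its distinguished variable $x_{1,tw_1}$ occurs in no other generator, so the Francisco--H\`a--Van Tuyl criterion applies verbatim, and the intersection is simply $K^{\mathcal P}\cdot L$ for an explicit squarefree ideal $L$ whose regularity is read off from the Lin--McCullough formula; the dominant term in the Betti-splitting maximum is in fact $\reg(J^{\mathcal P}\cap K^{\mathcal P})-1$, which is what produces the lower bound. The matching upper bound is not obtained from the rooted-forest formula for a path at all, but from a separate and substantial analysis (Sections 3--4): an explicit total order on $\mathcal{G}(I(C_n)^t)$, a description of the colon ideals $(J_i:L_i^{(t)})$, and iterated short exact sequences. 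Your sketch implicitly replaces all of that by unproven claims, so as it stands the argument does not go through.
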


\begin{Theorem}
Let $D=(V(D),E(D),w)$ be a vertex-weighted oriented unicyclic graph with $w(x)\geq 2$ for any $d(x)\neq 1$, then  for any $t\geq 1$
$$\mbox{reg}\,(I(D)^{t})=\sum\limits_{x\in V(D)}w(x)-|E(D)|+1+(t-1)(w+1)$$
where $w=\mbox{max}\,\{w(x)\mid x \in V(D)\}$.
\end{Theorem}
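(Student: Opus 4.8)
The plan is to reduce the unicyclic case to the oriented-cycle case proved just above, peeling off the trees one edge at a time. I would induct on the number $q=|E(D)|-|E(C_m)|$ of edges lying in the trees $T_1,\dots,T_s$, with a nested induction on $t$. For $q=0$ we have $D=C_m$, and since every vertex of a cycle has degree $2$ (hence weight $\ge 2$) the statement is exactly the theorem on vertex-weighted oriented cycles, for all $t\ge 1$; the inner case $t=1$ is subsumed in the general step via $I(D)^{0}=S$. Since regularity is invariant under polarization, I would freely replace the generators $x_ix_j^{w_j}$ by their squarefree polarizations whenever that makes a regularity estimate cleaner.

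For the step $q\ge 1$, choose a leaf $x$ of $D$ (necessarily in some $T_j$), let $y$ be its unique parent, and set $a=w(x)$, $u=yx^{a}$. Since $x$ is a sink of in-degree one, $x$ occurs in $I(D)$ only in the generator $u$, so $I(D)=I(D')+(u)$, where $D'=D\setminus x$ (delete the vertex $x$ and the edge $yx$) is again a vertex-weighted oriented unicyclic graph (or cyclic, if that tree was a single edge) satisfying the hypothesis, with $q-1$ tree edges. Expanding powers gives
$$I(D)^{t}=I(D')^{t}+u\,I(D)^{t-1},$$
where every generator of the summand $u\,I(D)^{t-1}$ is divisible by $x$ and no generator of $I(D')^{t}$ is. I would check that this is a Betti splitting (using the Betti-splitting machinery for powers of monomial ideals; in any case the Mayer--Vietoris sequence gives the matching upper bound unconditionally), so that
$$\reg I(D)^{t}=\max\Bigl\{\reg I(D')^{t},\ \reg\bigl(u\,I(D)^{t-1}\bigr),\ \reg\bigl(I(D')^{t}\cap u\,I(D)^{t-1}\bigr)+1\Bigr\}.$$

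It then remains to match the maximum of these three quantities with the target $F(t):=\sum_{x\in V(D)}w(x)-|E(D)|+1+(t-1)(w+1)$. The middle term is immediate: $u$ is a nonzerodivisor, so $u\,I(D)^{t-1}\cong I(D)^{t-1}(-a-1)$ and $\reg\bigl(u\,I(D)^{t-1}\bigr)=\reg I(D)^{t-1}+a+1$, which is known by the induction on $t$; the first term $\reg I(D')^{t}$ is known by the induction on $q$. Comparing the three, one finds $F(t)$ is realized by the first term when $a=1$, by the middle term when $a$ equals the global maximum weight $w$, and otherwise (when $2\le a<w$) only by the third term; one must also distinguish whether $w$ is attained solely at the deleted leaf $x$, since then the maximum weight of $D'$ is some $w'<a$ and the coefficient of $t-1$ changes accordingly.

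The crux --- and the step I expect to be the main obstacle --- is the third term: one must prove $\reg\bigl(I(D')^{t}\cap u\,I(D)^{t-1}\bigr)\le F(t)-1$, with equality when $2\le a<w$. The entry point is the identity $I(D')^{t}\cap u\,I(D)^{t-1}=x^{a}\bigl(I(D')^{t}\cap y\,I(D)^{t-1}\bigr)$, which strips off $x^{a}$; decomposing $y\,I(D)^{t-1}=\sum_{k=0}^{t-1}y\,u^{k}I(D')^{t-1-k}$ and then stripping off $y$ reduces the problem to the regularities of colon ideals $(I(D')^{t}:y)$ and of intersections $I(D')^{t}\cap y^{k+1}I(D')^{t-1-k}$ living in the smaller graph $D'$, each of which requires its own auxiliary lemma (again via polarization). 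The hypothesis $w(z)\ge 2$ at every vertex of degree $\ne 1$ enters precisely here: it guarantees that deleting a leaf never creates an interior vertex of weight $1$, so that every graph arising in the induction stays within the class where the formula holds.
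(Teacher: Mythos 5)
Your reduction $I(D)^t=I(D')^t+u\,I(D)^{t-1}$ at a leaf is a legitimate identity of monomial ideals, but the way you exploit it has two genuine problems. First, the splitting formula has the wrong sign: for a Betti splitting $I=J+K$ one has $\reg(I)=\max\{\reg(J),\reg(K),\reg(J\cap K)-1\}$ (the paper's Corollary \ref{cor1}), not $\reg(J\cap K)+1$. This is not cosmetic, because it shifts the target of your ``crux'' step by $2$: what you would actually need is $\reg\bigl(I(D')^t\cap u\,I(D)^{t-1}\bigr)\le F(t)+1$, with equality when $2\le a<w$, where $F(t)=\sum_{x\in V(D)}w(x)-|E(D)|+1+(t-1)(w+1)$. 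The inequality you propose to prove, $\reg\le F(t)-1$, is in fact false in that range: if it held, the Mayer--Vietoris upper bound alone would give $\reg(I(D)^t)\le\max\{F(t)+1-a,\ F(t)+a-w,\ F(t)-2\}\le F(t)-1$, contradicting the theorem already for a weight-$2$ leaf on a graph whose maximal weight is $3$. So the key lemma as you state it cannot be proved.

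Second, and more substantively, your argument supplies no lower bound. The only Betti-splitting criterion available (Lemma \ref{lem1}) requires the part containing all generators divisible by the leaf variable, namely $u\,I(D)^{t-1}$, to have a linear resolution, which fails as soon as two weights differ; without a genuine Betti splitting the exact sequence yields only ``$\le$''. In the case $2\le a<w$ both terms you can compute by induction are strictly below $F(t)$, so the whole theorem hinges on the intersection term being exactly attained --- precisely the part you leave to unproven auxiliary lemmas. The paper is organized to avoid this: the upper bound comes from colon exact sequences at a leaf, using the identities $(I(D)^t,z^{w_z})=(I(D\setminus z)^t,z^{w_z})$, $(I(D)^t:yz^{w_z})=I(D)^{t-1}$ and $((I(D)^t:z^{w_z}),y)=(I(D\setminus y)^t,y)$ of Lemma \ref{lem10}; the lower bound is then obtained after polarization by splitting off the polarized pure power of a single edge (a principal ideal, so the linear-resolution hypothesis is automatic), whose intersection with the remainder is a product $K_iL_i$ in disjoint variables computable via Lemma \ref{lem9}, and letting the split-off edge pass through a maximal-weight vertex realizes $F(t)$. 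Two further details you would also have to handle: the induced-subgraph convention resets to $1$ the weight of a vertex that becomes a source after deleting the leaf (this enters the bookkeeping of $\sum_{x}w(x)$), and the leaf must be chosen, as in the paper ($w_z\le w_x$ for another leaf $x$), so that the maximal weight of $D'$ is still $w$; otherwise the coefficient of $t-1$ in your first term drops and your case analysis needs to be redone.
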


Our paper is organized as follows. In section  $2$, we recall some
definitions and basic facts used in the  paper.
In section $3$, we provide a special order on the set of minimal
monomial generators of powers of edge ideals of vertex-weighted oriented cycles. Using this order, we give exact formulas for the
regularity of powers of edge ideals of vertex-weighted oriented cycles in  section $4$.  Moreover, we give some examples to show regularity of powers of edge ideals of vertex-weighted oriented cycles is related to
direction selection and  the assumption  that $w(x)\geq 2$  for any vertex $x$ cannot be dropped.
In section $5$,  we  give some exact formulas for  regularity of powers of  edge
ideals of vertex-weighted oriented unicyclic graphs. Moreover, we also give some examples to show  regularity of powers of
 edge ideals of vertex-weighted oriented unicyclic graphs are related to
direction selection and  the assumption  that
$w(x)\geq 2$  if $d(x)\neq 1$ cannot be dropped.

\medskip
For all unexplained terminology and additional information, we refer to \cite{JG} (for the theory
of digraphs), \cite{BM} (for graph theory), and \cite{BH,HH2} (for the theory of edge ideals of graphs and
monomial ideals).  We greatfully acknowledge the use of  computer algebra system CoCoA (\cite{Co}) for our experiments.

Throughout this paper, if $C_n=(V(C_n),E(C_n),w)$ be  an $n$-cycle such that $w(x)\geq 2$ for any $x\in V(C_n)$,
we  set $x_j=x_i$ if
$j\equiv i$ mod $n$ $(1\leq i\leq n)$. The  oriented  unicyclic graph $D=C_m\cup(\bigcup\limits_{j=1}^{s}T_j)$ satisfying if its underlying graph is $G=G_0\cup(\bigcup\limits_{j=1}^{s}G_j)$,  and $C_m$ is  an oriented cycle with underlying graph $G_0$ and $T_j$ is an oriented tree with  underlying graph $G_j$, its  orientation  is as follows:  if $V(G_0)\cap V(G_j)=\{x_{i_j}\}$, then $x_{i_j}$ is the root of $T_j$, and  all edges in $T_j$ are oriented away from $x_{i_j}$ for $1\leq j\leq s$.

\medskip
\section{Preliminaries }

In this section, we gather together needed  definitions and basic facts, which will
be used throughout this paper. However, for more details, we refer the reader to \cite{BBH1,BM,FHT,HH2,JG,MPV,PRT,Z2,Z4}.

Every concept that is valid for graphs automatically applies to digraphs too.
For example, let $D=(V(D),E(D))$ be a digraph,  the degree of a vertex $x$ in the  digraph $D$, denoted $d(x)$, is simply the degree of $x$ in
$G(D)$. Likewise, a digraph is said to be connected if
its underlying graph is connected. An {\em  oriented}  path or {\em  oriented}  cycle  is an orientation of a
path or cycle in which each vertex dominates its successor in the sequence.
An {\em  oriented} acyclic graph is a simple digraph without oriented cycles.
An {\em  oriented tree} or {\em polytree} is a  oriented acyclic graph formed by orienting the edges of undirected acyclic graphs.
A  {\em rooted tree} is an oriented tree in which all edges  are oriented  either away from or
towards the root.  Unless specifically stated, a rooted tree in this article
 is an oriented tree in which all edges  are oriented away from  the root.
An {\em oriented forest} is a disjoint union of oriented trees. A {\em rooted forest} is a disjoint union of rooted trees.

For any homogeneous ideal $I$ of the polynomial ring  $S=k[x_{1},\dots,x_{n}]$, there exists a {\em graded
minimal finite free resolution}

\vspace{3mm}
$$0\rightarrow \bigoplus\limits_{j}S(-j)^{\beta_{p,j}(I)}\rightarrow \bigoplus\limits_{j}S(-j)^{\beta_{p-1,j}(I)}\rightarrow \cdots\rightarrow \bigoplus\limits_{j}S(-j)^{\beta_{0,j}(I)}\rightarrow I\rightarrow 0,$$
where the maps are exact, $p\leq n$, and $S(-j)$ is an $S$-module obtained by shifting
the degrees of $S$ by $j$. The number
$\beta_{i,j}(I)$, the $(i,j)$-th graded Betti number of $I$, is
an invariant of $I$ that equals the number of minimal generators of degree $j$ in the
$i$th syzygy module of $I$.
Of particular interest is  the following invariant which measures the ¡°size¡± of the minimal graded
free resolution of $I$.
The regularity of $I$, denoted $\mbox{reg}\,(I)$, is defined by
$$\mbox{reg}\,(I):=\mbox{max}\,\{j-i\ |\ \beta_{i,j}(I)\neq 0\}.$$

\vspace{3mm}
Let $I$  be a monomial ideal,  $\mathcal{G}(I)$ denote the unique minimal set
of monomial generators of  $I$. We now derive some formulas for $\mbox{reg}\,(I)$ in some special cases by using some
tools developed in \cite{FHT}.

\begin{Definition} \label{bettispliting}Let $I$  be a monomial ideal, and suppose that there exist  monomial
ideals $J$ and $K$ such that $\mathcal{G}(I)$ is the disjoint union of $\mathcal{G}(J)$ and $\mathcal{G}(K)$. Then $I=J+K$
is  {\em Betti splitting} if
$$\beta_{i,j}(I)=\beta_{i,j}(J)+\beta_{i,j}(K)+\beta_{i-1,j}(J\cap K)\hspace{2mm}\mbox{for all}\hspace{2mm}i,j\geq 0,$$
where $\beta_{i-1,j}(J\cap K)=0\hspace{2mm}  \mbox{if}\hspace{2mm} i=0$.
\end{Definition}

In  \cite{FHT}, the authors describe some sufficient conditions for an
ideal $I$ to have a Betti splitting. We need  the following lemma.

\begin{Lemma}\label{lem1}(\cite[Corollary 2.7]{FHT}).
Suppose that $I=J+K$ where $\mathcal{G}(J)$ contains all
the generators of $I$ divisible by some variable $x_{i}$ and $\mathcal{G}(K)$ is a nonempty set containing
the remaining generators of $I$. If $J$ has a linear resolution, then $I=J+K$ is   Betti
splitting.
\end{Lemma}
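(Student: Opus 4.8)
The plan is to run the Mayer--Vietoris long exact sequence attached to the decomposition $I=J+K$ and to prove that, under the two hypotheses, every map out of $\Tor(J\cap K,k)$ in that sequence vanishes; the sequence then collapses into short exact sequences whose $k$-dimensions give exactly the additivity of Definition \ref{bettispliting}. Concretely, I would begin with the short exact sequence of $\NN$-graded $S$-modules
\[
0 \to J\cap K \xrightarrow{\ f\mapsto (f,-f)\ } J\oplus K \xrightarrow{\ (g,h)\mapsto g+h\ } I \to 0,
\]
whose exactness is immediate from $I=J+K$. Applying $-\otimes_S k$ and reading off the graded long exact sequence in $\Tor$ gives, for all $i,j$,
\[
\cdots \to \Tor_{i,j}(J\cap K,k) \xrightarrow{\ \psi_{i,j}\ } \Tor_{i,j}(J,k)\oplus \Tor_{i,j}(K,k) \to \Tor_{i,j}(I,k) \to \Tor_{i-1,j}(J\cap K,k) \to \cdots,
\]
where $\psi_{i,j}$ is induced by the two inclusions $J\cap K\hookrightarrow J$ and $J\cap K\hookrightarrow K$. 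Since $\beta_{i,j}(M)=\dim_k \Tor_{i,j}(M,k)$, proving the lemma amounts to showing $\psi_{i,j}=0$ for all $i,j$: then each boundary map is onto and each $\Tor_{i,j}(J)\oplus\Tor_{i,j}(K)\to\Tor_{i,j}(I)$ is injective, so the long sequence splits into $0\to \Tor_{i,j}(J)\oplus\Tor_{i,j}(K)\to \Tor_{i,j}(I)\to \Tor_{i-1,j}(J\cap K)\to 0$ and dimensions add.

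To kill the first component $\Tor_{i,j}(J\cap K)\to \Tor_{i,j}(J)$ I would use the linear-resolution hypothesis as a degree obstruction. Because $J$ has a linear resolution it is generated in a single degree $d$ and $\beta_{i,j}(J)=0$ unless $j=i+d$. The minimal generators of $J\cap K$ lie among the monomials $\lcm(f,g)$ with $f\in\mathcal{G}(J)$ and $g\in\mathcal{G}(K)$; as $f$ and $g$ are distinct minimal generators of $I$ neither divides the other, so $\deg \lcm(f,g) > \deg f = d$. Thus $J\cap K$ is generated in degrees $\geq d+1$, forcing $\beta_{i,j}(J\cap K)=0$ whenever $j<i+d+1$. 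Hence in every bidegree where the source $\Tor_{i,j}(J\cap K)$ is nonzero we have $j\geq i+d+1>i+d$, so the target $\Tor_{i,j}(J)$ vanishes and the map is zero.

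To kill the second component $\Tor_{i,j}(J\cap K)\to \Tor_{i,j}(K)$, total degrees no longer separate the two modules and I would pass to the fine $\NN^n$-grading. Every minimal generator of $J$ is divisible by $x_i$, hence $J\subseteq (x_i)$ and so every generator of $J\cap K$ is divisible by $x_i$ as well (since $J\cap K\subseteq J$); thus $J\cap K = x_i N$ for a monomial ideal $N$, and multiplication by $x_i$ is a multigraded isomorphism $N(-e_i)\cong J\cap K$. Consequently $\beta_{i,\mathbf{a}}(J\cap K)=\beta_{i,\mathbf{a}-e_i}(N)=0$ unless the $i$-th coordinate satisfies $a_i\geq 1$. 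On the other hand $K$ is generated by monomials free of $x_i$, hence is extended from the subring $k[x_\ell:\ell\neq i]$, and by flat base change its minimal resolution is supported only in multidegrees $\mathbf{a}$ with $a_i=0$. In each multidegree at least one of source and target is therefore zero, so the multigraded map vanishes; summing over $\mathbf{a}$ with $|\mathbf{a}|=j$ gives $\Tor_{i,j}(J\cap K)\to\Tor_{i,j}(K)$ zero as well. With both components of $\psi_{i,j}$ annihilated, the reduction of the first paragraph yields the Betti splitting identity.

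The crux is the second component: its vanishing is invisible at the level of total degrees and genuinely needs the multigraded separation by the splitting variable $x_i$ (divisibility by $x_i$ on the $J\cap K$ side against freeness from $x_i$ on the $K$ side), whereas the linear-resolution hypothesis is used only to separate $J$ from $J\cap K$ by a single total-degree shift. A minor point to verify carefully is the reduction itself, namely that $\psi_{i,j}\equiv 0$ really does force the long exact sequence to decompose as claimed, including the boundary case $i=0$, where $\beta_{-1,j}(J\cap K)=0$ and the identity degenerates to $\beta_{0,j}(I)=\beta_{0,j}(J)+\beta_{0,j}(K)$, i.e. the disjointness $\mathcal{G}(I)=\mathcal{G}(J)\sqcup\mathcal{G}(K)$.
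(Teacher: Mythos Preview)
The paper does not supply its own proof of this lemma; it is quoted verbatim as \cite[Corollary 2.7]{FHT} and used as a black box. Your argument is correct and is in fact the proof given in \cite{FHT}: there the Betti-splitting criterion is first reduced (their Proposition~2.1) to the vanishing of the two comparison maps $\Tor(J\cap K)\to\Tor(J)$ and $\Tor(J\cap K)\to\Tor(K)$ in the Mayer--Vietoris sequence, and Corollary~2.7 is then deduced via exactly the two observations you make---the linear-resolution hypothesis creates a total-degree gap between $J$ and $J\cap K$, while the $x_i$-splitting hypothesis creates a multidegree gap between $K$ and $J\cap K$.
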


When $I$ is a Betti
splitting ideal, Definition \ref{bettispliting} implies the following results:
\begin{Corollary} \label{cor1}
If $I=J+K$ is a Betti splitting ideal, then
$$\mbox{reg}\,(I)=\mbox{max}\,\{\mbox{reg}\,(J),\mbox{reg}\,(K),\mbox{reg}\,(J\cap K)-1\}.$$
\end{Corollary}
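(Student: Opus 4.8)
The plan is to extract the formula directly from the defining identity of a Betti splitting, using nothing beyond the characterization $\mbox{reg}\,(M)=\mbox{max}\,\{j-i\mid \beta_{i,j}(M)\neq 0\}$ and the fact that graded Betti numbers are non-negative integers, so that no cancellation can occur on the right-hand side of that identity.

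First I would invoke Definition \ref{bettispliting}: since $I=J+K$ is a Betti splitting, for all $i,j\geq 0$ we have $\beta_{i,j}(I)=\beta_{i,j}(J)+\beta_{i,j}(K)+\beta_{i-1,j}(J\cap K)$, with the convention that the last term is $0$ when $i=0$. Since all three terms on the right are non-negative, $\beta_{i,j}(I)\neq 0$ holds if and only if at least one of $\beta_{i,j}(J)$, $\beta_{i,j}(K)$, $\beta_{i-1,j}(J\cap K)$ is nonzero. Taking the maximum of $j-i$ over all such pairs $(i,j)$ therefore yields
$$\mbox{reg}\,(I)=\mbox{max}\,\bigl\{\,\alpha_J,\ \alpha_K,\ \alpha_{J\cap K}\,\bigr\},$$
where $\alpha_J=\mbox{max}\,\{j-i\mid \beta_{i,j}(J)\neq 0\}=\mbox{reg}\,(J)$, similarly $\alpha_K=\mbox{reg}\,(K)$, and $\alpha_{J\cap K}=\mbox{max}\,\{j-i\mid \beta_{i-1,j}(J\cap K)\neq 0\}$.

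It then remains to identify $\alpha_{J\cap K}$ with $\mbox{reg}\,(J\cap K)-1$. For this I would re-index by setting $i'=i-1$; by the convention above only $i\geq 1$ contributes, so $i'\geq 0$ runs over the homological degrees of $J\cap K$, and $j-i=(j-i')-1$. Hence $\alpha_{J\cap K}=\mbox{max}\,\{(j-i')-1\mid \beta_{i',j}(J\cap K)\neq 0\}=\mbox{reg}\,(J\cap K)-1$, and substituting this back gives the asserted equality $\mbox{reg}\,(I)=\mbox{max}\,\{\mbox{reg}\,(J),\mbox{reg}\,(K),\mbox{reg}\,(J\cap K)-1\}$.

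There is no real obstacle in this argument; the only two points deserving care are the observation that non-negativity of the Betti numbers forbids cancellation in the sum $\beta_{i,j}(J)+\beta_{i,j}(K)+\beta_{i-1,j}(J\cap K)$, and the correct tracking of the homological shift under $i\mapsto i-1$ together with the boundary case $i=0$. One also tacitly uses that $J$, $K$, and therefore $J\cap K$ are nonzero ideals --- which is exactly the setting of Definition \ref{bettispliting}, where $\mathcal{G}(J)$ and $\mathcal{G}(K)$ are nonempty --- so that all three regularities appearing in the statement are genuinely defined.
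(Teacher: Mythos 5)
Your proof is correct and is exactly the argument the paper intends: the paper states the corollary as an immediate consequence of Definition \ref{bettispliting}, and your expansion (no cancellation among non-negative Betti numbers, plus the homological re-indexing $i\mapsto i-1$ producing the $-1$ on $\mbox{reg}\,(J\cap K)$) is just that implication written out carefully.
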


\medskip
The following lemmas is often used in this article.
\begin{Lemma}
\label{lem2}(\cite[Lemma 1.3]{HTT}) Let $S$ be a polynomial ring over a field and let $I$ be a proper non-zero homogeneous
ideal in $S$. Then
\[\mbox{reg}\, (I)=\mbox{reg}\,(S/I)+1.\]
\end{Lemma}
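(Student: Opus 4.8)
The plan is to read the identity off the minimal graded free resolution, via the standard comparison between the Betti numbers of $I$ and those of $S/I$. First I would take the graded minimal free resolution $\mathbf{F}_\bullet \to I \to 0$ displayed in the Preliminaries and splice it with the surjection $S \to S/I$ to obtain the complex
$$\cdots \to \bigoplus_j S(-j)^{\beta_{1,j}(I)} \to \bigoplus_j S(-j)^{\beta_{0,j}(I)} \to S \to S/I \to 0 ,$$
which is a graded free resolution of $S/I$. Because $I$ is a proper ideal, all its minimal generators lie in $\mm=(x_1,\dots,x_n)$, so the entries of the map $\bigoplus_j S(-j)^{\beta_{0,j}(I)} \to S$ lie in $\mm$; combined with the minimality of $\mathbf{F}_\bullet$ this shows the spliced complex is itself minimal. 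Hence $\beta_{0,j}(S/I)=1$ if $j=0$ and $0$ otherwise, while $\beta_{i,j}(S/I)=\beta_{i-1,j}(I)$ for all $i\ge 1$ and all $j$.

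Next I would compute directly from $\reg(S/I)=\max\{\,j-i : \beta_{i,j}(S/I)\neq 0\,\}$. The homological degree $i=0$ contributes only $0-0=0$. For $i\ge 1$, substituting $\beta_{i,j}(S/I)=\beta_{i-1,j}(I)$ and reindexing with $i'=i-1$ turns the contribution into $\max\{\,(j-i')-1 : \beta_{i',j}(I)\neq 0\,\}=\reg(I)-1$. Therefore $\reg(S/I)=\max\{0,\ \reg(I)-1\}$. Since $I$ is nonzero and proper, it has at least one minimal generator and every generator has positive degree, so $\reg(I)\ge 1$; thus the maximum is $\reg(I)-1$, giving $\reg(I)=\reg(S/I)+1$.

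The one place I would be careful is the bookkeeping in homological degree $0$: one must ensure the copy of $S$ sitting in degree $0$ of the resolution of $S/I$ does not push the regularity past $\reg(I)-1$, and this is precisely where the hypotheses are used — "$I$ proper" keeps the generators in positive degree (so the spliced resolution stays minimal and $\reg(I)\ge 1$) and "$I$ nonzero" ensures $\reg(I)$ is defined in the first place. An alternative is the long exact sequence in local cohomology associated to $0\to I\to S\to S/I\to 0$ together with $\reg(M)=\max\{\,i+j : H^i_{\mm}(M)_j\neq 0\,\}$, but that route needs extra care with the top local cohomology $H^n_{\mm}(S)$, so I would favor the Betti-number argument above.
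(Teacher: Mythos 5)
Your argument is correct. The paper itself offers no proof of this lemma --- it is quoted verbatim from \cite[Lemma 1.3]{HTT} --- so there is nothing internal to compare against; what you have written is the standard (and essentially canonical) proof: splicing the minimal graded free resolution of $I$ with $S\to S/I$, noting that properness of $I$ forces the map $\bigoplus_j S(-j)^{\beta_{0,j}(I)}\to S$ to have entries in $\mathfrak{m}$ so that minimality is preserved, and hence $\beta_{i,j}(S/I)=\beta_{i-1,j}(I)$ for $i\geq 1$ with only $\beta_{0,0}(S/I)=1$ in homological degree $0$. Your bookkeeping at the end is also right: nonzero and proper give $\reg(I)\geq 1$, so $\reg(S/I)=\max\{0,\reg(I)-1\}=\reg(I)-1$, which is exactly the claimed identity.
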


\medskip
  Let  $u\in S$ be a monomial, we set $\mbox{supp}(u)=\{x_i: x_i|u\}$. If $\mathcal{G}(I)=\{u_1,\ldots,u_m\}$, we set $\mbox{supp}(I)=\bigcup\limits_{i=1}^{m}\mbox{supp}(u_i)$. The following lemma is well known.
 \begin{Lemma}
\label{lem3}(\cite[Lemma 2.5]{HTT})
Let $S_{1}=k[x_{1},\dots,x_{m}]$ and $S_{2}=k[x_{m+1},\dots,x_{n}]$ be two polynomial rings, $I\subseteq S_{1}$ and
$J\subseteq S_{2}$ be two non-zero homogeneous  ideals. Then
\[\mbox{reg}\,(I+J)=\mbox{reg}\,(I)+\mbox{reg}\,(J)-1.\]
\end{Lemma}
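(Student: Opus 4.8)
The plan is to reduce the statement about ideals to a statement about quotient rings and then invoke the Künneth formula for $\Tor$ over the base field $k$. Write $S=k[x_1,\dots,x_n]$, and note that since the two variable sets are disjoint we have $S=S_1\tensor_k S_2$. Regularity is unaffected by the ambient polynomial ring (adjoining variables is a free base change and leaves graded Betti numbers unchanged), so $\reg(I)$ may be computed over $S_1$ and $\reg(J)$ over $S_2$. Applying Lemma \ref{lem2} in $S_1$, in $S_2$, and in $S$ respectively gives $\reg(I)=\reg(S_1/I)+1$, $\reg(J)=\reg(S_2/J)+1$, and $\reg(I+J)=\reg(S/(I+J))+1$. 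Substituting these, the claimed identity $\reg(I+J)=\reg(I)+\reg(J)-1$ becomes equivalent to
\[
\reg\bigl(S/(I+J)\bigr)=\reg(S_1/I)+\reg(S_2/J),
\]
so it suffices to prove this tensor-product formula for the regularity of the quotient rings.

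Next I would set $A=S_1/I$ and $B=S_2/J$ and observe that, because the variable sets are disjoint, $S/(I+J)\iso A\tensor_k B$ as graded $S$-modules. Let $F_\bullet$ be the minimal graded free resolution of $A$ over $S_1$ and $G_\bullet$ the minimal graded free resolution of $B$ over $S_2$. The central step is to show that the tensor product complex $F_\bullet\tensor_k G_\bullet$ is the minimal graded free resolution of $A\tensor_k B$ over $S$. It is a complex of free $S$-modules because each $F_p\tensor_k G_q$ is free over $S_1\tensor_k S_2=S$; it is acyclic and resolves $A\tensor_k B$ because $k$ is a field, so tensoring over $k$ is exact and the Künneth theorem produces no higher $\Tor$ terms; and it is minimal because every entry of the differential of $F_\bullet$ lies in the irrelevant maximal ideal of $S_1$ and every entry of the differential of $G_\bullet$ lies in that of $S_2$, so all entries of the differential of $F_\bullet\tensor_k G_\bullet$ lie in the maximal ideal of $S$.

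Granting this, the graded Betti numbers multiply according to
\[
\beta_{i,j}^{S}(A\tensor_k B)=\sum_{\substack{p+q=i\\ a+b=j}}\beta_{p,a}^{S_1}(A)\,\beta_{q,b}^{S_2}(B).
\]
Since all Betti numbers are non-negative integers, there is no cancellation, and $\beta_{i,j}^{S}\neq 0$ precisely when it can be written as such a sum with at least one product of non-zero factors. Computing regularity as $\max\{j-i\mid \beta_{i,j}\neq 0\}$ and writing $j-i=(a-p)+(b-q)$ then yields $\reg_S(A\tensor_k B)=\reg_{S_1}(A)+\reg_{S_2}(B)$, which is exactly the reduced claim, and hence the lemma.

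The main obstacle is the verification that $F_\bullet\tensor_k G_\bullet$ is genuinely the \emph{minimal} resolution of $A\tensor_k B$ over $S$: one must establish both acyclicity (through the Künneth theorem and the flatness of $k$-modules) and, equally crucially, minimality (that no unit entries arise in the differential of the tensor complex). Once these two facts are secured, the regularity identity follows as a purely combinatorial consequence of the multiplicativity of the Betti numbers, with the $\max$ of a sum splitting into the sum of the two separate maxima precisely because no cancellation can occur.
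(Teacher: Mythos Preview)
Your argument is correct and follows the standard route: reduce to quotient rings via Lemma~\ref{lem2}, identify $S/(I+J)\cong (S_1/I)\otimes_k(S_2/J)$, tensor the minimal free resolutions over $k$, and read off the additivity of regularity from the multiplicativity of graded Betti numbers. The acyclicity of $F_\bullet\otimes_k G_\bullet$ follows from exactness of $-\otimes_k-$ over a field, and minimality is immediate since the differentials of the tensor complex inherit entries from the maximal ideals of $S_1$ and $S_2$.

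There is nothing to compare against: the paper does not supply its own proof of this lemma but simply quotes it from \cite[Lemma~2.5]{HTT}. Your write-up is a complete and self-contained justification of the cited fact.
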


 \begin{Lemma}\label{lem4}
  Let  $I, J=(u)$ be two monomial ideals  such that $\mbox{supp}\,(u)\cap \mbox{supp}\,(I)=\emptyset$. If the degree of monomial $u$  is  $d$. Then
\begin{itemize}
\item[(1)] $\mbox{reg}\,(J)=d$,
\item[(2)]$\mbox{reg}\,(JI)=\mbox{reg}\,(I)+d$.
\end{itemize}
\end{Lemma}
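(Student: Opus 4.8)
The plan is to treat the two parts separately; both are formal, and part~(2) reduces to understanding the effect of a degree shift on a minimal graded free resolution.

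For part~(1), since $J=(u)$ is a principal ideal and $S$ is an integral domain, multiplication by $u$ yields a graded short exact sequence $0\to S(-d)\xrightarrow{\,\cdot u\,}S\to S/J\to 0$; equivalently, $J\cong S(-d)$ as graded $S$-modules. Hence the only nonvanishing graded Betti number of $J$ is $\beta_{0,d}(J)=1$, so $\mbox{reg}\,(J)=d-0=d$.

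For part~(2), I would first observe that $JI=uI$ because $J=(u)$ is principal. Multiplication by $u$ is an injective (as $S$ is a domain and $u\neq 0$) and degree-$0$ homomorphism of graded $S$-modules $I(-d)\to S$ whose image is precisely $uI=JI$; therefore $I(-d)\cong JI$ as graded $S$-modules. Consequently $\beta_{i,j}(JI)=\beta_{i,j-d}(I)$ for all $i,j$, and so
\[
\mbox{reg}\,(JI)=\max\{\,j-i\mid \beta_{i,j}(JI)\neq 0\,\}=\max\{\,j'-i\mid \beta_{i,j'}(I)\neq 0\,\}+d=\mbox{reg}\,(I)+d .
\]

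I do not expect a genuine obstacle here: the result is a direct consequence of how graded Betti numbers behave under the shift functor $-\otimes_S S(-d)$. The only point needing a line of care is to check that the isomorphism $I(-d)\cong JI$ really is a degree-$0$ map of graded modules, which is immediate once multiplication by $u$ is written out on a set of generators. It is worth noting that the hypothesis $\mbox{supp}\,(u)\cap\mbox{supp}\,(I)=\emptyset$ is not strictly needed for either part; it is recorded because it additionally forces $(I:u)=I$, hence $J\cap I=uI=JI$, which is the shape in which this lemma is most convenient to feed into Corollary~\ref{cor1} and Lemma~\ref{lem3}.
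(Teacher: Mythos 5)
Your proof is correct. The paper in fact states Lemma \ref{lem4} without any proof (it is treated as a standard fact), so there is no argument of the authors' to compare against; your argument --- $J\cong S(-d)$ and $JI=uI\cong I(-d)$ via the degree-zero injection ``multiplication by $u$'', followed by the shift rule $\beta_{i,j}(I(-d))=\beta_{i,j-d}(I)$ --- is exactly the standard justification one would supply. Your closing remark is also accurate: the hypothesis $\mbox{supp}\,(u)\cap\mbox{supp}\,(I)=\emptyset$ is not needed for either equality, but it is what guarantees $(I:u)=I$ and hence $J\cap I=uI=JI$, which is the form in which the lemma is actually invoked (e.g.\ together with Corollary \ref{cor1} and Lemma \ref{lem9} in the computations of $\mbox{reg}\,(J^{\mathcal{P}}\cap K^{\mathcal{P}})$ and $\mbox{reg}\,(J_i\cap K_i)$).
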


\medskip
\begin{Definition} \label{polarization}
Suppose that $u=x_1^{a_1}\cdots x_n^{a_n}$ is a monomial in $S$. We define the {\it polarization} of $u$ to be
the squarefree monomial $$\mathcal{P}(u)=x_{11}x_{12}\cdots x_{1a_1} x_{21}\cdots x_{2a_2}\cdots x_{n1}\cdots x_{na_n}$$
in the polynomial ring $S^{\mathcal{P}}=k[x_{ij}\mid 1\leq i\leq n, 1\leq j\leq a_i]$.
If $I\subset S$ is a monomial ideal with $\mathcal{G}(I)=\{u_1,\ldots,u_m\}$,  the  {\it polarization}
of $I$,  denoted by $I^{\mathcal{P}}$, is defined as:
$$I^{\mathcal{P}}=(\mathcal{P}(u_1),\ldots,\mathcal{P}(u_m)),$$
which is a squarefree monomial ideal in the polynomial ring $S^{\mathcal{P}}$.
\end{Definition}

\medskip
A monomial ideal $I$ and its polarization $I^{\mathcal{P}}$ share many homological and
algebraic properties.  The following is a very useful property of polarization.

\begin{Lemma}
\label{lem5}(\cite[Corollary 1.6.3]{HH2}) Let $I\subset S$ be a monomial ideal and $I^{\mathcal{P}}\subset S^{\mathcal{P}}$ its polarization.
Then
\begin{itemize}
\item[(1)] $\beta_{ij}(I)=\beta_{ij}(I^{\mathcal{P}})$ for all $i$ and $j$,
\item[(2)] $\mbox{reg}\,(I)=\mbox{reg}\,(I^{\mathcal{P}})$.
\end{itemize}
\end{Lemma}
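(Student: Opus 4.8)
The plan is to prove both statements by reducing to a single ``polarization step'' and showing that such a step preserves all graded Betti numbers. Part (2) then follows immediately from part (1): by definition $\reg(I)=\max\{j-i\mid\beta_{ij}(I)\neq 0\}$ depends only on the graded Betti numbers, so once (1) is established, taking the maximum of $j-i$ over nonvanishing Betti numbers on both sides gives (2). Thus the entire content lies in (1).

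First I would record that $I^{\mathcal{P}}$ is obtained from $I$ by finitely many elementary steps, each introducing one new variable, so it suffices to handle one step and iterate. In one step, choose a variable $x$ whose top power occurring in $\mathcal{G}(I)$ is $b\geq 2$, introduce a fresh variable $x'$, and form $I'\subset S'=S[x']$ by replacing the factor $x^{b}$ in every generator of top $x$-power with $x^{b-1}x'$, leaving all other generators unchanged. Writing $\mathcal{G}(I')=\{x'a_i\}\cup\{b_j\}$ with $a_i,b_j$ free of $x'$, we have $a_i=x^{b-1}v_i$, so $A:=(a_i)\subseteq(x)$ and $xA\subseteq A$, while each $b_j$ has $x$-degree at most $b-1$. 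The depolarization map $\varphi\colon S'\to S$, $x'\mapsto x$, carries $I'$ onto $I$, whence $(S'/I')/(x-x')\cong S/I$.

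The key step, and the main obstacle, is to show that $z=x-x'$ is a nonzerodivisor on $S'/I'$. I would prove this by a direct monomial computation. Grading an element $f=\sum_{k\geq 0}x'^{k}f_k$ (with each $f_k$ free of $x'$) by the power of $x'$, and using that the monomial ideal $I'$ is $x'$-graded, the condition $(x-x')f\in I'$ decomposes into $xf_0\in B$ and $xf_k-f_{k-1}\in A+B$ for all $k\geq 1$, where $B=(b_j)$. Descending from the top $x'$-degree of $f$ and repeatedly invoking $x(A+B)\subseteq A+B$ yields $f_k\in A+B$ for every $k$. The delicate point is to upgrade $f_0\in A+B$ to $f_0\in B$, which is exactly what is needed for $f\in I'$; here the degree bookkeeping is essential. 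If a monomial $m$ of $f_0$ lay in $A$, then $\nu_x(m)\geq b-1$, so any generator $b_j$ dividing $xm$ would have $x$-degree at most $b-1\leq\nu_x(m)$ and would therefore already divide $m$, placing $m\in B$. Hence $f_0\in B$, so $f\in I'$ and $z$ is a nonzerodivisor.

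Finally I would invoke the standard specialization principle for a regular linear form. Since $z$ is a nonzerodivisor on $S'/I'$ and on $S'$, one has $\Tor^{S'}_{i}(S'/I',S'/(z))=0$ for $i\geq 1$, so tensoring a minimal graded free resolution of $S'/I'$ over $S'$ with $S'/(z)\cong S$ produces a resolution of $(S'/I')/(z)\cong S/I$ that remains minimal, because the differentials keep their entries in the maximal ideal. Consequently $\beta_{ij}(S'/I')=\beta_{ij}(S/I)$ for all $i,j$, and hence $\beta_{ij}(I')=\beta_{ij}(I)$ for all $i,j$. Iterating over the successive polarization steps gives $\beta_{ij}(I)=\beta_{ij}(I^{\mathcal{P}})$ for all $i,j$, which is (1), and (2) follows as noted above.
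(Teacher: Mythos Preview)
Your proof is correct. The paper does not supply its own argument for this lemma; it simply cites \cite[Corollary~1.6.3]{HH2} and uses the result as a black box. What you have written is essentially the standard proof found in Herzog--Hibi: reduce to a single polarization step, show that the linear form $x-x'$ is regular on $S'/I'$ by a direct monomial analysis, and then invoke the fact that reduction modulo a homogeneous nonzerodivisor preserves a minimal graded free resolution. Your treatment of the delicate point---upgrading $f_0\in A+B$ to $f_0\in B$ via the $x$-degree bound $\nu_x(b_j)\le b-1$---is handled correctly. So there is nothing to compare: the paper outsources the proof, and you have reproduced the classical one.
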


The following lemma can be used for computing the
regularity of an ideal.
\begin{Lemma}
\label{lem6}(\cite[Lemma 1.1 and Lemma 1.2]{HTT})  Let\ \ $0\rightarrow A \rightarrow  B \rightarrow  C \rightarrow 0$\ \  be a short exact sequence of finitely generated graded $S$-modules.
Then
\begin{itemize}
\item[(1)]$\mbox{reg}\,(C)\leq \mbox{max}\, \{\mbox{reg}\,(A)-1,\mbox{reg}\,(B)\}$,
\item[(2)]$\mbox{reg}\,(B)\leq \mbox{max}\, \{\mbox{reg}\,(A),\mbox{reg}\,(C)\}$,
\item[(3)]$\mbox{reg}\,(B)= \mbox{reg}\,(A)$ if $\mbox{reg}\,(A)>\mbox{reg}\,(C)+1$,
\item[(4)]$\mbox{reg}\,(B)= \mbox{reg}\,(C)$ if $\mbox{reg}\,(C)\geq\mbox{reg}\,(A)$,
\item[(5)]$\mbox{reg}\,(C)= \mbox{reg}\,(A)-1$ if $\mbox{reg}\,(A)> \mbox{reg}\,(B)$,
\end{itemize}
\end{Lemma}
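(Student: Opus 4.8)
The plan is to reduce everything to the local cohomology description of Castelnuovo--Mumford regularity. Writing $\mm=(x_1,\dots,x_n)$ for the graded maximal ideal of $S$ and setting, for a finitely generated graded module $M$,
$$a_i(M)=\max\{\,j : \Coh{i}{M}_j\neq 0\,\},$$
with the convention $a_i(M)=-\infty$ when $\Coh{i}{M}=0$, one has $\reg(M)=\max_{i\geq 0}\{a_i(M)+i\}$. Applying the local cohomology functor to the short exact sequence $0\to A\to B\to C\to 0$ produces a long exact sequence of graded modules
$$\cdots \to \Coh{i}{A} \to \Coh{i}{B} \to \Coh{i}{C} \xrightarrow{\ \delta\ } \Coh{i+1}{A} \to \cdots$$
in which every arrow, including the connecting map $\delta$, is homogeneous of degree $0$. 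The whole argument is then a degree-by-degree diagram chase through three consecutive terms of this sequence.

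First I would establish two ``one-step'' bounds. Fix $i$ and a degree $j$ with $\Coh{i}{B}_j\neq 0$; by exactness at $\Coh{i}{B}$, the image of such an element in $\Coh{i}{C}_j$ is either nonzero, forcing $a_i(C)\geq j$, or zero, in which case it lifts to $\Coh{i}{A}_j$, forcing $a_i(A)\geq j$. Either way $i+j\leq\max\{\reg(A),\reg(C)\}$, and taking the supremum over all such $(i,j)$ gives (2). An identical chase at $\Coh{i}{C}$, using the segment $\Coh{i}{B}\to\Coh{i}{C}\xrightarrow{\delta}\Coh{i+1}{A}$, shows that whenever $\Coh{i}{C}_j\neq 0$ the element either maps to a nonzero element of $\Coh{i+1}{A}_j$ (whence $a_{i+1}(A)\geq j$ and so $i+j\leq\reg(A)-1$) or is hit from $\Coh{i}{B}_j$ (whence $i+j\leq\reg(B)$); this yields (1). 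The same chase at $\Coh{i}{A}$, via $\Coh{i-1}{C}\xrightarrow{\delta}\Coh{i}{A}\to\Coh{i}{B}$, gives the auxiliary inequality $\reg(A)\leq\max\{\reg(B),\reg(C)+1\}$, which is not part of the statement but is needed below.

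The three equalities (3)--(5) then follow formally by combining these bounds with the respective hypotheses. For (3), the hypothesis $\reg(A)>\reg(C)+1$ makes the auxiliary inequality collapse to $\reg(A)\leq\reg(B)$, while (2) gives $\reg(B)\leq\max\{\reg(A),\reg(C)\}=\reg(A)$, hence equality. For (4), since $\reg(C)\geq\reg(A)>\reg(A)-1$, inequality (1) collapses to $\reg(C)\leq\reg(B)$, and (2) gives the reverse, so $\reg(B)=\reg(C)$. For (5), the hypothesis $\reg(A)>\reg(B)$ turns (1) into $\reg(C)\leq\reg(A)-1$ and simultaneously forces the auxiliary inequality to read $\reg(A)\leq\reg(C)+1$, i.e.\ $\reg(C)\geq\reg(A)-1$; the two together give $\reg(C)=\reg(A)-1$.

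The only genuine subtlety, and thus the \emph{main obstacle}, is the bookkeeping of degenerate cases in which one of the local cohomology modules vanishes, so that some $a_i$ equals $-\infty$: one must verify that the convention $\reg(\cdot)=\max_i\{a_i+i\}$ (with the $\max$ read as $-\infty$ when no cohomology is nonzero) keeps every inequality meaningful, and that no module occurring in the sequence is zero in a way that would render a claimed equality vacuous. Beyond that, the argument relies only on the standard fact that the connecting homomorphisms in the local cohomology long exact sequence preserve the internal grading; granting this, everything is a routine diagram chase requiring no further input.
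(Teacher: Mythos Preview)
Your argument is correct and is the standard proof via the long exact sequence in local cohomology; each of the five items is handled properly, including the auxiliary bound $\reg(A)\leq\max\{\reg(B),\reg(C)+1\}$ that you need for (3) and (5). Note, however, that the paper does not actually prove this lemma: it is quoted from \cite[Lemma~1.1 and Lemma~1.2]{HTT} and used as a black box, so there is no ``paper's own proof'' to compare against. Your write-up is essentially what one finds in the cited source (or in any standard treatment), so nothing further is required.
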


\vspace{5mm}
\section{Ordering the minimial generators of powers of edge ideals of  vertex-weighted oriented cycles}

In this section, we provide a special order on the unique minimal set of monomial generators of
 powers of edge ideals of vertex-weighted oriented cycles. Using this order, we will give some exact formulas for the
regularity of powers of edge ideals of vertex-weighted oriented cycles in  next section.

Throughout this section, let $C_n=(V(C_n),E(C_n),w)$ be  an $n$-cycle such that $w(x)\geq 2$ for any $x\in V(C_n)$ and $V(C_n)=\{x_1,\ldots,x_n\}$.  We define an order $L_1>\cdots>L_n$ on the set $\mathcal{G}(I(C_n))$ where $L_i=x_{i-1}x_i^{w_i}$ for $1\leq i\leq n$ and $x_j=x_i$ if
$j\equiv i$ mod $n$ $(1\leq i\leq n)$.
For any integer $t\geq 1$, we define an order on  the set $\mathcal{G}(I(C_n)^{t})$ as follows: We say
$M>N$ for $M,N\in \mathcal{G}(I(C_n)^{t})$ if $M=L_1^{a_1}\cdots L_n^{a_n}$, $N=L_1^{b_1}\cdots L_n^{b_n}$ such that $\sum\limits_{i=1}^{n}a_{i}=\sum\limits_{i=1}^{n}b_{i}=t$,
 we have  $({a_1},\ldots,{a_n})>_{lex}({b_1},\ldots,{b_n})$. We denoted by $L^{(t)}$  the totally ordered
set of  $\mathcal{G}(I(C_n)^{t})$ ordered in the way  above and by $L_k^{(t)}$  the $k$-th element of the set $L^{(t)}$.

\medskip
According to the order defined above, we can sort the set of generators of the following ideal.
\begin{Example} \label{exm1} Let $I(C_3)=(x_3x_1^{2},x_1x_2^2,x_2x_3^{2})$ be the edge ideal of $3$-cycle $C_3$. Then $L^{(2)}=\{(x_3x_1^{2})^2, (x_3x_1^{2})(x_1x_2^2), (x_3x_1^{2})(x_2x_3^{2}), (x_1x_2^2)^2,
(x_1x_2^2)(x_2x_3^{2}), (x_2x_3^{2})^2\}$.
\end{Example}

\medskip
We have the following fundamental fact.
\begin{Theorem}\label{thm1}
 Let $t$ be a positive integer and  $M\in \mathcal{G}(I(C_n)^{t})$, then $M$ can be shown as
 $M=L_{i_1}^{a_{i_1}}\cdots L_{i_{\ell}}^{a_{i_{\ell}}}$ with $\sum\limits_{p=1}^{\ell}a_{i_p}=t$, $a_{i_p}>0$ for $1\leq p\leq \ell$ and $1\leq i_1<\cdots < i_{\ell}\leq n$.  Moreover, the  expression of this form is unique.
\end{Theorem}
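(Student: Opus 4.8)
The plan is to separate the statement into an \emph{existence} part (that $M$ has an expression of the stated shape) and a \emph{uniqueness} part, the latter carrying essentially all the content.

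For existence, I would invoke the general fact that if $I=(g_1,\dots,g_r)$ is a monomial ideal, then $I^t$ is generated by the products $g_{j_1}\cdots g_{j_t}$ with $1\le j_1\le\cdots\le j_t\le r$; hence every element of the minimal generating set $\mathcal{G}(I^t)$ is itself one of these products. Taking $I=I(C_n)$ with $g_i=L_i$, the element $M\in\mathcal{G}(I(C_n)^t)$ is a product of exactly $t$ of the $L_i$. Collecting repeated factors gives $M=L_1^{a_1}\cdots L_n^{a_n}$ with $a_i\ge 0$ and $\sum_{i=1}^n a_i=t$, and deleting the indices with $a_i=0$ produces the required form $M=L_{i_1}^{a_{i_1}}\cdots L_{i_\ell}^{a_{i_\ell}}$ with $1\le i_1<\cdots<i_\ell\le n$ and all $a_{i_p}>0$.

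For uniqueness, the idea is that the exponent vector of $M$ already recovers the tuple $(a_1,\dots,a_n)$. Since $L_i=x_{i-1}x_i^{w_i}$ (indices read modulo $n$), the variable $x_j$ divides $L_i$ exactly when $i=j$, with exponent $w_j$, or when $i=j+1$, with exponent $1$; thus $M=\prod_i L_i^{a_i}$ forces $\deg_{x_j}(M)=w_ja_j+a_{j+1}$ for every $j$. If $\prod_i L_i^{a_i}=\prod_i L_i^{b_i}$ with $a_i,b_i\ge 0$, set $c_j=a_j-b_j$; comparing the exponent of each $x_j$ gives the cyclic recursion $c_{j+1}=-w_jc_j$, whence $c_1=(-1)^n\big(\prod_{j=1}^n w_j\big)c_1$, i.e. $\big(1-(-1)^n\prod_{j=1}^n w_j\big)c_1=0$. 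Since $w_j\ge 2$ for all $j$ we have $\prod_{j=1}^n w_j\ge 2^n>1$, so the coefficient is nonzero and $c_1=0$; the recursion then forces $c_j=0$ for all $j$. Hence $a_i=b_i$ for all $i$, which is exactly the uniqueness of the expression.

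The only real obstacle is the non-vanishing of $1-(-1)^n\prod_{j=1}^n w_j$, and this is precisely where the standing hypothesis $w(x)\ge 2$ enters. Equivalently, one is asserting that the $n\times n$ matrix with diagonal entries $w_1,\dots,w_n$, entries $1$ in positions $(j,j+1)$ for $1\le j\le n$ (indices modulo $n$), and zeros elsewhere is invertible; its determinant equals $\prod_{j=1}^n w_j+(-1)^{n-1}$, which vanishes when $n$ is even and all $w_j=1$ (for instance $L_1L_3=L_2L_4$ in the $4$-cycle). Everything else is routine: the existence part is just a restatement of how $\mathcal{G}(I^t)$ looks, and the recursion bookkeeping in the uniqueness part is elementary.
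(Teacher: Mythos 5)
Your proof is correct, and the uniqueness argument takes a genuinely different route from the paper's. The paper proves uniqueness by induction on $t$: it first claims that two factorizations of $M$ must share a common edge factor (proving this by contradiction -- if the index sets were disjoint, comparing total degrees yields $\sum_p a_{i_p}(1-w_{i_p}w_{i_p+1})\geq 0$, impossible when all weights are at least $2$), then cancels the common factor and invokes the induction hypothesis. You instead linearize the problem: since each vertex of the cycle lies on exactly two edges, the exponent of $x_j$ in $\prod_i L_i^{a_i}$ is $w_ja_j+a_{j+1}$, so the difference $c_j=a_j-b_j$ of two exponent vectors satisfies the cyclic recursion $c_{j+1}=-w_jc_j$, and going once around the cycle gives $\bigl(1-(-1)^n\prod_{j=1}^n w_j\bigr)c_1=0$, forcing $c=0$ because $\prod_j w_j\geq 2^n>1$. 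This is more direct (no induction, no analysis of shared factors), and it isolates exactly where the standing hypothesis $w(x)\geq 2$ enters; your observation that the relevant determinant is $\prod_j w_j+(-1)^{n-1}$, which vanishes for an even cycle with all weights $1$ (e.g. $L_1L_3=L_2L_4$ in $C_4$), shows the uniqueness genuinely fails without that hypothesis. What the paper's combinatorial route buys is stylistic continuity with the subsequent, harder results of the section (Lemma \ref{lem7} and Theorems \ref{thm2}--\ref{thm3}), where the same kind of ``common edge factor'' manipulation recurs; for the present theorem, your argument is the cleaner one. The existence half is handled the same way in both: every element of $\mathcal{G}(I(C_n)^t)$ is a product of $t$ of the $L_i$ because the monomial ideal $I(C_n)^t$ is generated by such products, which is what the paper dismisses as clear.
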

\begin{proof} It is clear that $M$ can be shown as
 $M=L_{i_1}^{a_{i_1}}\cdots L_{i_{\ell}}^{a_{i_{\ell}}}$ with $\sum\limits_{p=1}^{\ell}a_{i_p}=t$, $a_{i_p}>0$ for $1\leq p\leq \ell$ and $1\leq i_1<\cdots < i_{\ell}\leq n$.
Assume  $L_{i_1}^{a_{i_1}}\cdots L_{i_{\ell}}^{a_{i_{\ell}}}$ and $L_{j_1}^{b_{j_1}}\cdots L_{j_m}^{b_{j_m}}$ are two expressions of $M$ with $\sum\limits_{p=1}^{\ell}a_{i_p}=\sum\limits_{q=1}^{m}b_{i_q}=t$,
 where $a_{i_p}, b_{j_q}>0$ for any $1\leq p\leq \ell$, $1\leq q\leq m$ and  $1\leq i_1<\cdots < i_{\ell}\leq n$, $1\leq j_1<\cdots < j_m\leq n$. We will show that $\ell=m$, $i_p=j_p$ and $a_{i_p}=b_{j_p}$ for $1\leq p\leq \ell$.
 We use induction on $t$. Case $t=1$ is clear. Now we assume $t\geq 2$.

  Claim: $\{i_1,\ldots,i_{\ell}\}\cap \{j_1,\ldots,j_m\}\neq\emptyset$, thus
  we  assume that $i_1=j_1$. It follows that
 $$L_{i_1}^{a_{i_1}-1}\cdots L_{i_{\ell}}^{a_{i_{\ell}}}=L_{j_1}^{b_{j_1}-1}\cdots L_{j_m}^{b_{j_m}}$$
 Therefore, by  induction hypothesis, we obtain that
  $\ell=m$, $i_p=j_p$ and $a_{i_p}=b_{j_p}$ for $1\leq p\leq \ell$, as desired.

 In fact, if  $\{i_1,\ldots,i_{\ell}\}\cap \{j_1,\ldots,j_m\}=\emptyset$.  For  any  $1\leq p\leq \ell$, $L_{i_p}$ is a factor of monomial $L_{i_1}^{a_{i_1}}\cdots L_{i_{\ell}}^{a_{i_{\ell}}}$, thus it is also
a factor of monomial $L_{j_1}^{b_{j_1}}\cdots L_{j_m}^{b_{j_m}}$. Hence  there exists $1\leq s\leq m$ such that $L_{j_s}=x_{i_p}x_{i_p+1}^{w_{i_p+1}}$.  By the expression of  $L_{i_1}^{a_{i_1}}\cdots L_{i_{\ell}}^{a_{i_{\ell}}}$,
 we obtain
$$L_{j_1}^{b_{j_1}}\cdots L_{j_m}^{b_{j_m}}=(x_{i_1}x_{i_1+1}^{w_{i_{1}+1}})^{a_{i_1}w_{i_1}}\cdots (x_{i_{\ell}}x_{i_{\ell}+1}^{w_{i_{\ell}+1}})^{a_{i_{\ell}}w_{i_{\ell}}}M'$$
where $M'$ is a monomial. By comparing the degree of monomials $L_{i_1}^{a_{i_1}}\cdots L_{i_{\ell}}^{a_{i_{\ell}}}$ and $L_{j_1}^{b_{j_1}}\cdots L_{j_m}^{b_{j_m}}$, we get
 $$\sum\limits_{p=1}^{\ell}a_{i_p}(1+w_{i_p})\geq \sum\limits_{p=1}^{\ell}a_{i_p}w_{i_{p}}(1+w_{i_p+1}).$$
This implies $\sum\limits_{p=1}^{\ell}a_{i_p}(1-w_{i_p}w_{i_p+1})\geq0$, a contradiction.
\end{proof}

\medskip
\begin{Definition}\label{def1}
Let $1\leq k<t$ be two  integers, and  $M_1\in \mathcal{G}(I(C_n)^{k})$, $M_2\in \mathcal{G}(I(C_n)^{t})$.
We denoted by $M_1\mid^{edge}M_2$  if there exists $M_3\in \mathcal{G}(I(C_n)^{t-k})$ such that $M_2=M_1M_3$. Otherwise, we denoted by $M_1\nmid\,^{edge}M_2$.
\end{Definition}

\medskip
The following three results are needed.
\begin{Lemma}\label{lem7}
 Let $L_{i}^{(2)}, L_{j}^{(2)}\in L^{(2)}$ such that $L_{i}^{(2)}>L_{j}^{(2)}$,
then there exists $L_{k}^{(2)}\in L^{(2)}$  such that $L_{i}^{(2)}>L_{k}^{(2)}$ and
 $(L_{k}^{(2)}:L_{i}^{(2)})$ has one of the following two forms:
\begin{enumerate}
\item[(1)] $(L_{k}^{(2)}:L_{i}^{(2)})=(L_{\ell_2}:L_{\ell_1})$, where  $L_{\ell_1}>L_{\ell_2}$, $L_{\ell_2}\mid^{edge}L_{k}^{(2)}$ and $L_{\ell_1}\mid^{edge}L_{i}^{(2)}$;
\item[(2)] $(L_{k}^{(2)}:L_{i}^{(2)})=(L_{n-2}L_{n}:L_{1}L_{n-1})$, where $L_{n}\mid^{edge}L_{k}^{(2)}$,  $L_{n-2}\mid^{edge}L_{k}^{(2)}$, $L_{n-1}\mid^{edge}L_{i}^{(2)}$ and $L_{1}\mid^{edge}L_{i}^{(2)}$.
\end{enumerate}
Furthermore, $(L_{j}^{(2)}:L_{i}^{(2)})\subseteq (L_{k}^{(2)}:L_{i}^{(2)})$.
 \end{Lemma}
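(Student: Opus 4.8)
The plan is to work with the colon ideal $(L_j^{(2)}:L_i^{(2)})$ directly and produce the required $L_k^{(2)}$ by ``moving'' from $L_j^{(2)}$ towards $L_i^{(2)}$ one step in the lexicographic order, exactly as in the familiar reduction used for linear-quotient orderings. Write $L_i^{(2)}=L_{\ell_1}L_{\ell_1'}$ and $L_j^{(2)}=L_{\ell_2}L_{\ell_2'}$ with $\ell_1\le\ell_1'$, $\ell_2\le\ell_2'$ (this is the unique expression from Theorem~\ref{thm1}). Since $L_i^{(2)}>L_j^{(2)}$ in the order on $\mathcal G(I(C_n)^2)$, the exponent vectors satisfy $(a_1,\dots,a_n)>_{lex}(b_1,\dots,b_n)$; let $r$ be the first coordinate where they differ, so $a_r>b_r$. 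First I would single out the edge $L_{\ell_1}$ with $\ell_1=\min\{p : a_p>0\}$; by choice of $r$ and since the vectors agree before $r$, one checks $\ell_1\le r$ and $L_{\ell_1}\mid^{edge}L_i^{(2)}$ while, comparing which edges can appear in $L_j^{(2)}$, either $L_{\ell_1}\nmid^{edge}L_j^{(2)}$ or it divides with strictly smaller multiplicity. The monomial $L_k^{(2)}$ will be obtained from $L_i^{(2)}$ by replacing one factor $L_{\ell_1}$ with a suitable larger-index edge $L_{\ell_2}$ drawn from the support of $L_j^{(2)}$; then $L_k^{(2)}<L_i^{(2)}$ automatically because we have decreased the first nonzero exponent.

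The heart of the argument is the case analysis on how the edges $L_{\ell_1},L_{\ell_2}$ sit on the cycle $C_n$. Recall $L_p=x_{p-1}x_p^{w_p}$, so $(L_q:L_p)$ is controlled entirely by whether the two edges are vertex-disjoint, share the ``head'' vertex $x_p$, or share the ``tail'' vertex $x_{p-1}$. In the generic situation the single-edge swap already works: one verifies $(L_k^{(2)}:L_i^{(2)})=(L_{\ell_2}:L_{\ell_1})$, giving form (1), and that $L_{\ell_1}>L_{\ell_2}$ holds because we moved from the minimal-index edge of $L_i^{(2)}$ to an edge of $L_j^{(2)}$ of larger index (this is where $\ell_1\le r$ and $a_r>b_r$ are used). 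The exceptional form (2), with colon $(L_{n-2}L_n : L_1 L_{n-1})$, arises precisely from the wrap-around of the cyclic indexing: when $L_i^{(2)}$ is forced to contain the two consecutive edges $L_1$ and $L_{n-1}$ (i.e.\ $\ell_1=1$, and the next available edge index is $n-1$) and the corresponding edges on the $L_j^{(2)}$ side are $L_{n-2}$ and $L_n$, a single swap does not suffice and one must exchange the pair simultaneously; here I would just compute both sides using $x_0=x_n$, $x_{n+1}=x_1$ and $w(x)\ge 2$ to confirm the stated equality of colon ideals together with the four divisibility assertions $L_n,L_{n-2}\mid^{edge}L_k^{(2)}$ and $L_{n-1},L_1\mid^{edge}L_i^{(2)}$.

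Finally, the containment $(L_j^{(2)}:L_i^{(2)})\subseteq(L_k^{(2)}:L_i^{(2)})$ is proved by comparing supports: in case (1) every variable of $L_{\ell_2}/\gcd(L_{\ell_1},L_{\ell_2})$ appears in $L_j^{(2)}/\gcd(L_i^{(2)},L_j^{(2)})$ because $L_{\ell_2}\mid^{edge}L_j^{(2)}$ and the factor $L_{\ell_1}$ we removed is ``used up'' against $L_i^{(2)}$; in case (2) the same bookkeeping is done with the pair $L_{n-2}L_n$ against $L_1L_{n-1}$. I expect the main obstacle to be organizing the case distinction cleanly: one has to show that \emph{every} pair $L_i^{(2)}>L_j^{(2)}$ falls into exactly one of the two patterns, and in particular that whenever the naive single-edge swap would leave $L_{\ell_1}\le L_{\ell_2}$ (so form (1) fails) we are necessarily in the wrap-around configuration producing form (2) — this is the place where the cyclic structure of $C_n$, as opposed to a path, is genuinely used, and it is worth isolating as a short combinatorial sublemma about which index sets of size two can be the supports of elements of $L^{(2)}$.
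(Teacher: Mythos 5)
There is a genuine gap in the construction of $L_k^{(2)}$. You fix the removed factor in advance: $L_{\ell_1}$ is always the lex-leading (minimal-index) edge of $L_i^{(2)}$, and $L_k^{(2)}=\frac{L_i^{(2)}}{L_{\ell_1}}L_{\ell_2}$ for some edge $L_{\ell_2}$ of $L_j^{(2)}$. This choice does not always admit a valid insertion. Take $n=6$, all weights $2$, $L_i^{(2)}=L_2L_4=x_1x_2^2x_3x_4^2$ and $L_j^{(2)}=L_3L_5=x_2x_3^2x_4x_5^2$ (no shared edge, $L_i^{(2)}>L_j^{(2)}$). Here $(L_j^{(2)}:L_i^{(2)})=(x_3x_5^2)$, while removing $L_2$ and inserting $L_3$ gives $(L_3:L_2)=(x_3^2)$ and inserting $L_5$ gives $(L_5:L_2)=(x_4x_5^2)$; neither contains $x_3x_5^2$, and no other insertion helps (any other $L_m$ introduces a variable not dividing $x_3x_5^2$), nor does form (2) apply since $L_1,L_5\nmid^{edge}L_i^{(2)}$. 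The only single-edge swap that works removes the \emph{non}-leading edge: $L_k^{(2)}=L_2L_5$, with $(L_5:L_4)=(x_5^2)\supseteq(x_3x_5^2)$. The same failure already occurs in the shared-edge situation, e.g.\ $L_i^{(2)}=L_1L_3$, $L_j^{(2)}=L_1L_4$: the correct move cancels the common edge and swaps $L_3$ for $L_4$, not $L_1$ for anything. So the removed edge must be chosen \emph{after} one knows which edge is being inserted; the paper does this by first testing whether $(L_j^{(2)}:L_i^{(2)})\subseteq(x_{j_r}^{w_{j_r}})$ for some edge $L_{j_r}$ of $L_j^{(2)}$ and then removing the \emph{smallest} edge of $L_i^{(2)}$ that is still larger than $L_{j_r}$, reserving the wrap-around analysis (your form (2), plus the intermediate case $L_k^{(2)}=L_{j_1+1}L_n$) for when no such pure-power containment holds.

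A related soft spot is your containment argument for form (1): the claim that every variable of $L_{\ell_2}/\gcd(L_{\ell_1},L_{\ell_2})$ survives in $L_j^{(2)}/\gcd(L_j^{(2)},L_i^{(2)})$ ``because the removed factor is used up'' ignores cancellation of those variables against the edge of $L_i^{(2)}$ that you \emph{keep}; that is exactly what kills the insertions in the example above (the $x_4$ of $L_5$, resp.\ one copy of $x_3$ of $L_3$, is absorbed by the retained $L_4$). This cancellation control is where the paper spends its effort (showing $x_{j_r-1}\nmid\gcd(L_j^{(2)},L_i^{(2)})$ under its choice of removed edge), and it is the part your sketch would need to rebuild after correcting the choice of $\ell_1$. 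Also, the side remark that the leading edge of $L_i^{(2)}$ either does not divide $L_j^{(2)}$ or divides with strictly smaller multiplicity is false when $\ell_1<r$ (equal multiplicities); in that case one should simply cancel the common edge, as the paper does. The identity $(L_k^{(2)}:L_i^{(2)})=(L_{\ell_2}:L_{\ell_1})$ for a single swap and the inequality $L_i^{(2)}>L_k^{(2)}$ are fine, but they are the easy half of the lemma.
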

\begin{proof}
 Set  $L_{j}^{(2)}=L_{j_1}L_{j_2}$ with $1\leq j_1\leq j_2\leq n$. If  there exists some $1\leq a\leq 2$ such that $L_{j_a}\mid^{edge}L_{i}^{(2)}$. For convenience, we  assume $a=1$, thus we get $(L_{j}^{(2)}:L_{i}^{(2)})=(L_{j_2}^{(1)}:L_{i'}^{(1)})$ and $L_{i'}^{(1)}>L_{j_2}^{(1)}$,
where  $L_{i'}^{(1)}=\frac{L_{i}^{(2)}}{L_{j_1}}$.
Choose  $k=j$, $\ell_2=j_2$ and $\ell_1=i'$, the result holds.

Otherwise, if $L_{j_a}\nmid^{edge}L_{i}^{(2)}$ for any  $1\leq a\leq 2$, then  $j_1\geq2$. In this case,
 we consider the following two cases:

 (i) If there exists some $1\leq r\leq 2$ such that $(L_{j}^{(2)}:L_{i}^{(2)})\subseteq (x_{j_r}^{w_{j_r}})$. Let $L_{i}^{(2)}=L_{i_1}L_{i_2}$ with $1\leq i_1\leq i_2\leq n$. Choose $b=2$ if $L_{i_2}>L_{j_r}$, otherwise $b=1$,  and  $L_{k}^{(2)}=\frac{L_{i}^{(2)}}{L_{i_b}}L_{j_r}$, thus we get $L_{i_b+1}\geq L_{j_r}$, $L_{i}^{(2)}>L_{k}^{(2)}$ and  $(L_{k}^{(2)}:L_{i}^{(2)})=(L_{j_r}:L_{i_b})$. Notice that
 \[
 (L_{j}^{(2)}:L_{i}^{(2)})=(\frac{L_{j}^{(2)}}{\gcd(L_{j}^{(2)},L_{i}^{(2)})})\ \ \ \text{and}\ \ \
 (L_{j_r}:L_{i_b})=\left\{\begin{array}{ll}
(x_{j_r}^{w_{j_r}})\ \ &\text{if}\  \  L_{j_r}=L_{i_b+1},\\
(L_{j_r})\ \ &
\text{if}\ \ L_{i_b+1}>L_{j_r}.
\end{array}\right.
\]
  If  $L_{j_r}=L_{i_b+1}$, then the result is true. Otherwise,  it is enough to show that $x_{j_{r}-1}$  is not a factor of $\gcd(L_{j}^{(2)},L_{i}^{(2)})$. Thus we obtain $L_{j_r}$ is a factor of  generator $\frac{L_{j}^{(2)}}{\gcd(L_{j}^{(2)},L_{i}^{(2)})}$ of $(L_{j}^{(2)}:L_{i}^{(2)})$, the assertion follows   from the formula  above. In fact, if
  $x_{j_{r}-1}$  is a factor of $L_{i}^{(2)}$. By  the expression of $L_j^{(2)}$ and  the  hypothesis that  $L_{j_a}\nmid\,^{edge}L_{i}^{(2)}$ for any $a=1,2$,  we  obtain  $L_{j_r-1}\mid^{edge}L_{i}^{(2)}$.
 It follows  $L_{j_r-1}=L_{i_b}$ by the definition of $b$, contradicting with  the  hypothesis $L_{i_b+1}>L_{j_r}$.

(ii) If $(L_{j}^{(2)}:L_{i}^{(2)})\nsubseteq (x_{j_r}^{w_{j_r}})$ for any $1\leq r\leq 2$, then  $x_{j_r}$  is a factor of $\gcd(L_{j}^{(2)},L_{i}^{(2)})$
from the expression of $L_j^{(2)}$ and the formula of $(L_{j}^{(2)}:L_{i}^{(2)})$.
 This implies
$L_{j_r+1} \mid^{edge}L_{i}^{(2)}$ by  the  hypotheses that  $L_{j_a}\nmid\,^{edge}L_{i}^{(2)}$ for any $a=1,2$.
Thus $L_{i}^{(2)}$ has the form
$$L_{i}^{(2)}=L_{j_1+1}L_{j_2+1}.$$
It follows $j_2=n$  by the expression of $L_j^{(2)}$ and $L_{i}^{(2)}>L_{j}^{(2)}$.
Claim:   $j_1\neq n-1,n$. In fact, if $j_1=n-1$, then $L_{n} \mid^{edge}L_{i}^{(2)}$ and $L_{n} \mid^{edge}L_{j}^{(2)}$, contradicting with  the  hypothesis  $L_{j_a}\nmid\,^{edge}L_{i}^{(2)}$ for any $a=1,2$.
If  $j_1=n$, then $(L_{j}^{(2)}:L_{i}^{(2)})\subseteq (x_{n}^{w_{n}})$, contradicting with $(L_{j}^{(2)}:L_{i}^{(2)})\nsubseteq (x_{j_r}^{w_{j_r}})$ for any $1\leq r\leq 2$.
 Hence $j_1\leq n-2$, which implies $n\geq 4$ because of $j_1\geq 2$. We consider the following two cases:

 (i) If  $2\leq j_1< n-2$, then $n\geq 5$ and $j_1+1< n-1$. Choose $L_{k}^{(2)}=L_{j_1+1}L_{n}$, we obtain
 $(L_{k}^{(2)}:L_{i}^{(2)})=(L_n:L_{1})=(x_{n-1}x_n^{w_n-1})$, $L_{i}^{(2)}>L_{k}^{(2)}$ and $(L_{j}^{(2)}:L_{i}^{(2)})\subseteq(x_{n-1}x_n^{w_n-1})$, which implies $(L_{j}^{(2)}:L_{i}^{(2)})\subseteq (L_{k}^{(2)}:L_{i}^{(2)})$.

 (ii)  If  $j_1=n-2$, then we choose  $k=j$. Thus $L_{i}^{(2)}>L_{k}^{(2)}$ and $(L_{j}^{(2)}:L_{i}^{(2)})=(L_{k}^{(2)}:L_{i}^{(2)})=(L_{n-2}L_n:L_{n-1}L_{1})$.
\end{proof}

\medskip
The next two theorems are the most important technical results of this section. They play  vital roles in calculating the regularity of  powers of edge ideals of vertex-weighted oriented cycles in the next section.
\begin{Theorem}\label{thm2}
 Let $t$ be a positive integer, $\ell=\mbox{min}\,\{t,\lfloor\frac{n}{2}\rfloor\}-1$, where $\lfloor\frac{n}{2}\rfloor$ denotes  the largest integer  $\leq \frac{n}{2}$, and let $L_{i}^{(t)}, L_{j}^{(t)}\in L^{(t)}$ with $L_{i}^{(t)}>L_{j}^{(t)}$,
then there exists some $L_{k}^{(t)}\in L^{(t)}$  such that $L_{i}^{(t)}>L_{k}^{(t)}$ and
 $(L_{k}^{(t)}:L_{i}^{(t)})$ has one of the following two forms:
\begin{enumerate}
\item[(1)]   $(L_{k}^{(t)}:L_{i}^{(t)})=(L_{\ell_2}:L_{\ell_1})$, where $L_{\ell_1}>L_{\ell_2}$,  $L_{\ell_2}\mid^{edge}L_{k}^{(t)}$  and $L_{\ell_1}\mid^{edge}L_{i}^{(t)}$;
\item[(2)] $(L_{k}^{(t)}:L_{i}^{(t)})=(\prod\limits_{s=0}^{q}L_{n-2s}:\prod\limits_{s=0}^{q}L_{n+1-2s})$ for some $q\leq \ell$, where $L_{n-2s}\mid^{edge}L_{k}^{(t)}$,  $L_{n+1-2s}\mid^{edge}L_{i}^{(t)}$  for any $0\leq s \leq q$, and
$n+1-2s\equiv j$   mod $n$ for some $0<j\leq n$.
\end{enumerate}
 Furthermore, $(L_{j}^{(t)}:L_{i}^{(t)})\subseteq (L_{k}^{(t)}:L_{i}^{(t)})$.
 \end{Theorem}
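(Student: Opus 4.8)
The plan is to argue by induction on $t$, with $t=1$ and $t=2$ as base cases. For $t=1$ one computes directly the colon $(L_j:L_i)$ of two edge generators and checks that $L_k:=L_j$ already has all the required properties: it is of form (1) in general, and of form (2) with $q=0$ precisely when $L_i$ and $L_j$ are the two edges $L_1=x_nx_1^{w_1}$ and $L_n=x_{n-1}x_n^{w_n}$ overlapping at $x_n$. For $t=2$ the statement is exactly Lemma \ref{lem7} (note $\ell=1$ as soon as $n\geq 4$, and $n=3$ is immediate).

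For the inductive step, assume $t\geq 3$ and use Theorem \ref{thm1} to write $L_i^{(t)}$ and $L_j^{(t)}$ in their unique factorizations as products of edges. \emph{First case: some edge $L_p$ divides both $L_i^{(t)}$ and $L_j^{(t)}$ (in the sense of $\mid^{edge}$), i.e. $p\in\{i_1,\dots,i_t\}\cap\{j_1,\dots,j_t\}$.} Put $M_i:=L_i^{(t)}/L_p$ and $M_j:=L_j^{(t)}/L_p$, which lie in $\mathcal{G}(I(C_n)^{t-1})$. Deleting the common factor $L_p$ from the two exponent vectors preserves strict lexicographic order, so $M_i>M_j$ in $L^{(t-1)}$. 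Since $\min\{t-1,\lfloor\frac{n}{2}\rfloor\}-1\leq\ell$, the inductive hypothesis applied to $M_i>M_j$ produces $N\in L^{(t-1)}$ with $N<M_i$, with $(N:M_i)$ of one of the two prescribed forms, and with $(M_j:M_i)\subseteq(N:M_i)$. Set $L_k^{(t)}:=L_pN\in\mathcal{G}(I(C_n)^{t})$. Multiplying the two exponent vectors by $L_p$ again preserves order, so $L_k^{(t)}<L_i^{(t)}$; because $L_p$ divides both members of each pair we get $(L_k^{(t)}:L_i^{(t)})=(N:M_i)$ and $(L_j^{(t)}:L_i^{(t)})=(M_j:M_i)$; and the $\mid^{edge}$-conditions in forms (1)/(2) lift from $N,M_i$ to $L_k^{(t)},L_i^{(t)}$ since $L_p$ times a generator of $I(C_n)^{t-1}$ is a generator of $I(C_n)^{t}$. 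This disposes of every pair whose generators share an edge.

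\emph{Second case: no edge divides both $L_i^{(t)}$ and $L_j^{(t)}$.} This case does not descend to $t-1$ and is treated by imitating the proof of Lemma \ref{lem7}. Writing $L_j^{(t)}=L_{j_1}\cdots L_{j_t}$ with $1\leq j_1\leq\cdots\leq j_t\leq n$ (so $j_1\geq 2$, exactly as in Lemma \ref{lem7}), one distinguishes whether $(L_j^{(t)}:L_i^{(t)})$ is contained in some $(x_{j_r}^{w_{j_r}})$. If it is, a single edge swap --- replacing one edge of $L_i^{(t)}$ by $L_{j_r}$, with the edge removed chosen exactly as in Lemma \ref{lem7} so the result stays below $L_i^{(t)}$ --- yields $L_k^{(t)}$ with colon of form (1) containing $(L_j^{(t)}:L_i^{(t)})$. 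If it is not, the constraints on which powers of which variables are shared force $L_i^{(t)}$ and $L_j^{(t)}$ to be ``rotations'' of one another along a contiguous arc of the cycle whose extreme edges are $L_1$ and $L_n$; iterating the swap that replaces $L_{a+1}$ by $L_a$ along \emph{every second} edge $L_n,L_{n-2},L_{n-4},\dots$ builds $L_k^{(t)}$ with colon of form (2). The alternation by two is forced because consecutive edges $L_a=x_{a-1}x_a^{w_a}$ and $L_{a+1}=x_ax_{a+1}^{w_{a+1}}$ overlap in exactly the variable $x_a$; and the chain length $q+1$ is bounded by the number $t$ of edges available in $L_i^{(t)}$ and by the number $\lfloor\frac{n}{2}\rfloor$ of ``every-other'' positions on an $n$-cycle, whence $q\leq\ell$.

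I expect the second case to be the main obstacle. One must bookkeep carefully which powers of which variables are common to $L_i^{(t)}$ and $L_j^{(t)}$, check at each swap that the monomial produced is still a minimal generator of $I(C_n)^{t}$ and still strictly below $L_i^{(t)}$ in the order, determine precisely when the process terminates after one swap (form (1)) versus when it is forced to propagate around the arc (form (2)), and finally verify the containment $(L_j^{(t)}:L_i^{(t)})\subseteq(L_k^{(t)}:L_i^{(t)})$ for the monomial so constructed.
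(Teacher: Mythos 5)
Your plan coincides with the paper's own proof: induction on $t$ with $t=1,2$ (Lemma \ref{lem7}) as base cases; reduction modulo a common edge factor; and, when no edge is shared, the dichotomy according to whether $(L_{j}^{(t)}:L_{i}^{(t)})$ lies in some $(x_{j_r}^{w_{j_r}})$, with a single edge swap producing form (1) and an alternating chain of swaps $L_{n+1-2s}\mapsto L_{n-2s}$ producing form (2). The base cases and your first case are correct, and there you are if anything more explicit than the paper in checking that cancelling and re-multiplying by $L_p$ preserves the lex order and the colon ideals, and that the bound $q\le\ell$ survives the passage from $t-1$ to $t$.

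There is, however, a genuine gap exactly where you predict the difficulty: in the subcase with no shared edge and $(L_{j}^{(t)}:L_{i}^{(t)})\nsubseteq(x_{j_r}^{w_{j_r}})$ for every $r$, you only describe a program, and part of the description is inaccurate. What is forced there is not that $L_{i}^{(t)}$ and $L_{j}^{(t)}$ sit on a contiguous arc with extreme edges $L_1$ and $L_n$, but that $L_{i}^{(t)}$ is the edgewise unit shift $L_{j_1+1}\cdots L_{j_t+1}$ of $L_{j}^{(t)}$ (so $L_n\mid^{edge}L_{j}^{(t)}$ and $L_1\mid^{edge}L_{i}^{(t)}$, while the remaining edges need not be contiguous). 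The paper then takes $q$ maximal with $L_{j_{t-s}}=L_{n-2s}$ for $0\le s\le q$, sets $L_{k}^{(t)}=Q_2\prod_{s=0}^{q}L_{n-2s}$ where $Q_2$ is the untouched (shifted) part of $L_{i}^{(t)}$, and --- this is the crux --- proves $(L_{j}^{(t)}:L_{i}^{(t)})\subseteq(L_{k}^{(t)}:L_{i}^{(t)})$ by showing $\supp\,(\prod_{s=0}^{q}L_{n-2s})\cap\supp\,(Q_2)=\emptyset$, i.e.\ $L_{j_{t-q-1}+1}\notin\{L_{n-2q+1},L_{n-2q},L_{n-2q-1}\}$: the first possibility is excluded using the weight hypothesis $w(x)\ge 2$ (it would force the colon into $(x_{n-2q}^{w_{n-2q}})$, contradicting the subcase assumption), the second by the no-common-edge assumption, and the third by the maximality of $q$. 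None of this appears in your proposal --- in particular the hypothesis $w(x)\ge 2$ is never invoked --- so the argument is incomplete precisely at the step that yields form (2) and its containment. The gap is one of execution rather than strategy: filling it in along the lines just indicated reproduces the paper's proof.
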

\begin{proof}
We proceed by induction on $t$. Case $t=1$ holds if we choose $k=\ell_2=j$, $\ell_1=i$.  Case $t=2$ holds from Lemma
\ref{lem7}.  Now suppose that $t\geq 3$.
Set $L_{j}^{(t)}=L_{j_1}\cdots L_{j_t}$ with  $1\leq j_1\leq\cdots\leq j_t\leq n$. Similar  to Lemma \ref{lem7}, we consider the following two cases:

(I) If there exists some $1\leq a\leq t$ such that $L_{j_a}\mid^{edge}L_{i}^{(t)}$, then
\[
(L_{j}^{(t)}:L_{i}^{(t)})=(L_{j'}^{(t-1)}:L_{i'}^{(t-1)})
\]
where $L_{j'}^{(t-1)}=\frac{L_{j}^{(t)}}{L_{j_a}}$ and  $L_{i'}^{(t-1)}=\frac{L_{i}^{(t)}}{L_{j_a}}$.
By induction hypothesis, there exists some  $k'$   such that $L_{i'}^{(t-1)}>L_{k'}^{(t-1)}$ and
$(L_{k'}^{(t-1)}:L_{i'}^{(t-1)})$ is one of the following two forms:

(i) $(L_{k'}^{(t-1)}:L_{i'}^{(t-1)})=(L_{\ell_2}:L_{\ell_1})$ with $L_{\ell_1}>L_{\ell_2}$,  $L_{\ell_2}\mid^{edge}L_{k'}^{(t-1)}$  and $L_{\ell_1}\mid^{edge}L_{i'}^{(t-1)}$.

(ii)  $(L_{k'}^{(t-1)}:L_{i'}^{(t-1)})=(\prod\limits_{s=0}^{q'}L_{n-2s}:\prod\limits_{s=0}^{q'}L_{n+1-2s})$  for some $q'\leq \ell'$,
 where $\ell'=\mbox{min}\,\{t-1,\lfloor\frac{n}{2}\rfloor\}-1$, $L_{n-2s}\mid^{edge}L_{k'}^{(t-1)}$,  $L_{n+1-2s}\mid^{edge}L_{i'}^{(t-1)}$  for any $0\leq s \leq q'$ and  $n+1-2s\equiv j$  mod $n$ for some $0<j\leq n$.

We choose $L_{k}^{(t)}=L_{j_a}L_{k'}^{(t-1)}$, then  $L_{i}^{(t)}>L_{k}^{(t)}$ and  $(L_{k}^{(t)}:L_{i}^{(t)})= (L_{k'}^{(t-1)}:L_{i'}^{(t-1)})$, as desired.

(II)  If $L_{j_a}\nmid\,^{edge}L_{i}^{(t)}$ for any $1\leq a\leq t$, then  $j_1\geq 2$ because of $L_{i}^{(t)}>L_{j}^{(t)}$.
 We consider the following two cases:

(i) If there exists some  $1\leq r\leq t$ such that $(L_{j}^{(t)}:L_{i}^{(t)})\subseteq (x_{j_r}^{w_{j_r}})$. Set $L_{i}^{(t)}=L_{i_1}\cdots L_{i_t}$ with $1\leq i_1\leq\cdots\leq i_t\leq n$.
We choose $L_{i_b}=\mbox\,{min}\{L_{i_b}\mid L_{i_b}>L_{j_r}\ \mbox{and}\  L_{i_b}\mid^{edge} L_{i}^{(t)} \}$
 and $L_{k}^{(t)}=\frac{L_{i}^{(t)}}{L_{i_b}}L_{j_r}$, thus $L_{i}^{(t)}>L_{k}^{(t)}$ and $(L_{k}^{(t)}:L_{i}^{(t)})=(L_{j_r}:L_{i_b})$.
Similar arguments as  Lemma \ref{lem7},
we have
$(L_{j}^{(t)}:L_{i}^{(t)})\subseteq (L_{j_r}:L_{i_b})$.
 Hence the conclusion holds.

(ii) If $(L_{j}^{(t)}:L_{i}^{(t)})\nsubseteq (x_{j_r}^{w_{j_r}})$ for any $1\leq r\leq t$, then  $n\geq 4$ because of $L_{j_a}\nmid\,^{edge}L_{i}^{(t)}$ for any $1\leq a\leq t$, which implies $\ell\geq 1$. Since $(L_{j}^{(t)}:L_{i}^{(t)})=(\frac{L_{j}^{(t)}}{\gcd(L_{j}^{(t)},L_{i}^{(t)})})$, we have  $x_{j_r}$  is a factor of  $L_i^{(t)}$.
 This implies
$L_{j_r+1} \mid^{edge}L_{i}^{(t)}$ for $1\leq r\leq t$ by  the  hypothesis  that $L_{j_a}\nmid\,^{edge}L_{i}^{(t)}$ for any $1\leq a\leq t$.
Thus  $L_{i}^{(t)}$ has the form
$$L_{i}^{(t)}=L_{j_1+1}\cdots L_{j_t+1}.$$
This implies $L_{j_t+1}=L_1$  by the expression of $L_j^{(t)}$ and $L_{i}^{(t)}>L_{j}^{(t)}$.
It follows that $L_{j_t}=L_n$, i.e., $j_t=n$.

 Choose $q=\mbox{max}\{q\mid L_{j_{t-q}}=L_{n-2q}, L_{n-2q}\mid^{edge} L_{j}^{(t)}\ \mbox{for any}\ \  0\leq q\leq \ell\}$, thus   $L_{n-2s}\mid^{edge}L_{j}^{(t)}$ and   $L_{n+1-2s}\mid^{edge}L_{i}^{(t)}$  for any $0\leq s \leq q$. Next,  we consider the following two cases:

(i) If $q=t-1$, then $\ell=t-1$. In this case, $L_j^{(t)}=\prod\limits_{s=0}^{t-1}L_{n-2s}$ and  $L_i^{(t)}=\prod\limits_{s=0}^{t-1}L_{n-2s+1}$. Choose
$k=j$, as desired.

(ii) If $q\leq t-2$, then $n\geq 2\ell+2 \geq 2q+2$ by the definition of $q$ and $\ell$, $L_j^{(t)}=Q_1\prod\limits_{s=0}^{q}L_{n-2s}$ and  $L_i^{(t)}=Q_2\prod\limits_{s=0}^{q}L_{n-2s+1}$, where $Q_1=\frac{L_j^{(t)}}{\prod\limits_{s=0}^{q}L_{n-2s}}=L_{j_1}\cdots L_{j_{t-q-1}}$ and $Q_2=\frac{L_i^{(t)}}{\prod\limits_{s=0}^{q}L_{n+1-2s}}=L_{j_1+1}\cdots L_{j_{t-q-1}+1}$.  Choose $L_k^{(t)}=Q_2\prod\limits_{s=0}^{q}L_{n-2s}$, then $L_{i}^{(t)}>L_{k}^{(t)}$, $(L_{k}^{(t)}:L_{i}^{(t)})=(\prod\limits_{s=0}^{q}L_{n-2s}:\prod\limits_{s=0}^{q}L_{n+1-2s})$ and  $L_{n-2s}\mid^{edge}L_{k}^{(t)}$,  $L_{n+1-2s}\mid^{edge}L_{i}^{(t)}$  for any $0\leq s \leq q$.

Next we  prove $(L_{j}^{(t)}:L_{i}^{(t)})\subseteq (L_{k}^{(t)}:L_{i}^{(t)})$.
This is equivalent to prove $uL_{i}^{(t)}\in (L_{k}^{(t)})$ for  any $u\in (L_{j}^{(t)}:L_{i}^{(t)})$. It  is enough  to prove $\prod\limits_{s=0}^{q}L_{n-2s}\mid u\prod\limits_{s=0}^{q}L_{n-2s+1}$ by the expression of $L_{i}^{(t)}$ and $L_{k}^{(t)}$. In fact, let $u\in (L_{j}^{(t)}:L_{i}^{(t)})$, then $uL_{i}^{(t)}\in (L_{j}^{(t)})$. This implies  $Q_1\prod\limits_{s=0}^{q}L_{n-2s}\mid u\,Q_2\prod\limits_{s=0}^{q}L_{n+1-2s}$ because of   $L_j^{(t)}=Q_1\prod\limits_{s=0}^{q}L_{n-2s}$ and  $L_i^{(t)}=Q_2\prod\limits_{s=0}^{q}L_{n-2s+1}$.
It is  sufficient  to show $\supp\,(\prod\limits_{s=0}^{q}L_{n-2s})\cap \supp\,(Q_2)=\emptyset$, it is equivalent to  $L_{j_{t-q-1}+1}\notin \{L_{n-2q+1},L_{n-2q},L_{n-2q-1}\}$. In fact, if $L_{j_{t-q-1}+1}= L_{n-2q+1}$, then $L_{j_{t-q-1}}= L_{n-2q}$ and $(L_{j}^{(t)}:L_{i}^{(t)})\subseteq (x_{n-2q}^{w_{n-2q}})$
 because of $L_{n-2q}^2\mid^{edge}L_j^{(t)}$, the expression of $L_{i}^{(t)}$ and $w_{n-2q}\geq 2$,  contradicting  with the hypothesis  $(L_{j}^{(t)}:L_{i}^{(t)})\nsubseteq (x_{j_r}^{w_{j_r}})$ for any $1\leq r\leq t$.
If  $L_{j_{t-q-1}+1}=L_{n-2q}$, then  $L_{i}^{(t)}$ and  $L_{j}^{(t)}$ have a common edge $L_{n-2q}$, a contradiction.
If  $L_{j_{t-q-1}+1}=L_{n-2q-1}$, then $L_{j_{t-q-1}}= L_{n-2q-2}$,  contradicting  with the choice of $q$.
This proof is complete.
\end{proof}

\medskip
\begin{Theorem}\label{thm3}  Let $t$ be  a positive integer, $\ell=\mbox{min}\,\{t,\lfloor\frac{n}{2}\rfloor\}-1$, $L^{(t)}=\{L_1^{(t)}, \ldots,L_r^{(t)}\}$ a totally ordered set of all elements of
  $\mathcal{G}(I(C_n)^{t})$ such that $L_1^{(t)}> \cdots >L_r^{(t)}$. For any $1\leq i\leq r$, we write $L_i^{(t)}$ as
$L_i^{(t)}=L_{i_1}^{a_{i_1}}\cdots L_{i_{k_i}}^{a_{i_{k_i}}}$ with $1\leq i_1<\cdots < i_{k_i}\leq n$,
$\sum\limits_{j=1}^{{k_i}}a_{i_j}=t$ and $a_{i_j}>0$ for $j=1,\ldots,k_i$. For $1\leq i\leq r-1$, let
$J_i=(L_{i+1}^{(t)}, \ldots, L_r^{(t)})$, $K_i=((L_{i_1+1},\ldots, L_n):L_{i_1})+\sum\limits_{j=1}^{p_i}(L_{i_j+1}:L_{i_j})$, where  if $i_{k_i}=n$, then
${p_i}=k_i-1$, otherwise, ${p_i}=k_i$.
\begin{enumerate}
\item[(1)] If $i_1=1$, then  $(J_i:L_{i}^{(t)})=K_i+Q_i$
and  $Q_{i}=\sum\limits_{j=0}^{q_i}(\prod\limits_{s=0}^{j}L_{n-2s}:\prod\limits_{s=0}^{j}L_{n+1-2s})$, where $q_i=\mbox\,{max}\{q:L_{n+1-2q}|^{edge}L_{i}^{(t)}\ \mbox{for any}\  0\leq q\leq \ell\}$;
\item[(2)] If $i_1\geq2$,  then $(J_i:L_{i}^{(t)})=K_i$.
\end{enumerate}
\end{Theorem}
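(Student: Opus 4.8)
The plan is to compute the colon ideal $\bigl(J_i:L_i^{(t)}\bigr)$ by hand and match it against $K_i$ (resp.\ $K_i+Q_i$). The starting point is the standard identity for monomial ideals
$$\bigl(J_i:L_i^{(t)}\bigr)=\sum_{k=i+1}^{r}\bigl(L_k^{(t)}:L_i^{(t)}\bigr).$$
Throughout I keep the cyclic convention $L_0=L_n$, $L_{n+1}=L_1$, and I note that when $i\le r-1$ one necessarily has $i_1\le n-1$, since $i_1=n$ would force $L_i^{(t)}=L_n^{t}$, the least element of $L^{(t)}$.

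First I would prove the inclusion ``$\subseteq$''. For each $k>i$, Theorem \ref{thm2} supplies an index $k'$ with $L_i^{(t)}>L_{k'}^{(t)}$, with $\bigl(L_k^{(t)}:L_i^{(t)}\bigr)\subseteq\bigl(L_{k'}^{(t)}:L_i^{(t)}\bigr)$, and with $\bigl(L_{k'}^{(t)}:L_i^{(t)}\bigr)$ of shape (1) or (2) described there; it therefore suffices to recognise each of these two shapes inside $K_i$, resp.\ $K_i+Q_i$. In shape (1) the colon is $(L_{\ell_2}:L_{\ell_1})$ with $\ell_1<\ell_2$ and $L_{\ell_1}\mid^{edge}L_i^{(t)}$, so $\ell_1=i_j$ for some $j$ and $\ell_1\le n-1$. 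A direct gcd computation shows $\gcd(L_{\ell_1},L_{\ell_2})=1$ unless $\ell_2=\ell_1+1$ or $\{\ell_1,\ell_2\}=\{1,n\}$: when the gcd is $1$ the colon is $(L_{\ell_2})$ with $i_1\le\ell_1<\ell_2\le n$, hence $\subseteq((L_{i_1+1},\dots,L_n):L_{i_1})\subseteq K_i$; when $\ell_2=\ell_1+1$ it is $(x_{\ell_1+1}^{w_{\ell_1+1}})=(L_{\ell_1+1}:L_{\ell_1})$, which (as $\ell_1=i_j$ with $j\le p_i$, because $\ell_1\le n-1$) is a generator of $K_i$; and the case $\{\ell_1,\ell_2\}=\{1,n\}$ forces $i_1=1$ and gives $(L_n:L_1)\subseteq((L_2,\dots,L_n):L_1)\subseteq K_i$. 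In shape (2) the factor $L_{n+1}=L_1$ among the $L_{n+1-2s}$ dividing $L_i^{(t)}$ forces $i_1=1$, and the parameter $q$ there satisfies $q\le q_i$ (since $L_{n+1-2q}\mid^{edge}L_i^{(t)}$ and $q\le\ell$), so the colon equals the $j=q$ summand of $Q_i$. In particular shape (2) cannot occur when $i_1\ge 2$, which already gives part (2) of the statement.

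Next I would prove ``$\supseteq$'' by realising each listed generator $g$ as $(M:L_i^{(t)})=(g)$ for some monomial $M$ that is a product of $t$ edge generators and satisfies $M<L_i^{(t)}$; using the fact — which follows by an elementary counting argument from $w(x)\ge 2$ — that every product of $t$ edge generators of $C_n$ is already a minimal generator of $I(C_n)^{t}$, this $M$ equals $L_k^{(t)}$ for an index $k>i$, so $M\in J_i$ and $g\in(M:L_i^{(t)})\subseteq(J_i:L_i^{(t)})$. The monomial $M$ is obtained from $L_i^{(t)}$ by one edge swap: for the generator of $(L_m:L_{i_1})$ with $i_1<m\le n$ take $M=\frac{L_i^{(t)}}{L_{i_1}}L_m$; for $g=x_{i_j+1}^{w_{i_j+1}}$, the generator of $(L_{i_j+1}:L_{i_j})$ with $j\le p_i$, take $M=\frac{L_i^{(t)}}{L_{i_j}}L_{i_j+1}$ (valid since $i_j\le n-1$); and, when $i_1=1$, for the $j$-th summand of $Q_i$ take $M=\frac{L_i^{(t)}}{\prod_{s=0}^{j}L_{n+1-2s}}\prod_{s=0}^{j}L_{n-2s}$, which makes sense because the definition of $q_i$ forces $L_1,L_{n-1},\dots,L_{n+1-2j}\mid^{edge}L_i^{(t)}$, and these edges having pairwise disjoint supports their product also $\mid^{edge}L_i^{(t)}$. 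In each case, writing $M=CA$ and $L_i^{(t)}=CB$ with a common monomial $C$ yields $(M:L_i^{(t)})=(A:B)=(g)$, and $M<L_i^{(t)}$ holds because the swap lowers the exponent of the removed base and raises that of a base of strictly larger index, so the exponent vectors of $M$ and $L_i^{(t)}$ first differ — and differ downward — at the coordinate of the removed base; the cyclic identification $L_{n+1}=L_1$ is essential here, since removing a copy of $L_1$ lowers the first and hence leftmost coordinate.

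The step I expect to be the main obstacle is the bookkeeping for $i_1=1$. There the identification $L_0=L_n$ makes $L_n$ share the variable $x_n$ with $L_1$, and that single overlap is what produces the extra generator $(L_n:L_1)=x_{n-1}x_n^{w_n-1}$ inside $((L_2,\dots,L_n):L_1)$ and, once this is iterated along the alternating chain $L_1,L_{n-1},L_{n-3},\dots$, the whole tail $Q_i=\sum_{j=0}^{q_i}\bigl(\prod_{s=0}^{j}L_{n-2s}:\prod_{s=0}^{j}L_{n+1-2s}\bigr)$. One must keep precise track of which gcd's collapse to $1$ and which do not, of the role of the cut-off $\ell=\min\{t,\lfloor n/2\rfloor\}-1$ in guaranteeing that the chain never wraps around a second time, and of the fact that each swap replacing $L_1$ by $L_n$ (and its iterates along the chain) strictly decreases $L_i^{(t)}$ in the chosen order rather than merely producing a divisor that might be larger.
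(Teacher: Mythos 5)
Your proposal is correct and follows essentially the same route as the paper's own proof: one inclusion $(J_i:L_i^{(t)})\subseteq K_i$ (resp.\ $K_i+Q_i$) via Theorem \ref{thm2}, and the reverse inclusion by exhibiting exactly the same swapped monomials $\frac{L_i^{(t)}}{L_{i_1}}L_m$, $\frac{L_i^{(t)}}{L_{i_j}}L_{i_j+1}$ and $T_j=\frac{L_i^{(t)}}{\prod_{s=0}^{j}L_{n+1-2s}}\prod_{s=0}^{j}L_{n-2s}$ inside $J_i$. If anything, you spell out two points the paper leaves implicit (that every product of $t$ edges is a minimal generator of $I(C_n)^t$ when all weights are at least $2$, and the gcd case analysis placing the shape-(1) colons inside $K_i$), which is fine.
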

\begin{proof} It is obvious for $t=1$. Now  assume that $t\ge 2$.
Set  $M_{j}=\frac{L_{i}^{(t)}}{L_{i_{1}}}L_{j}$ for any $i_1+1\leq j\leq n$, $N_{j}=\frac{L_{i}^{(t)}}{L_{i_{j}}}L_{i_{j}+1}$ for any $1\leq j\leq p_i$, then $(M_{i_1+1},\ldots,M_{n},N_{1},\ldots,N_{p_i})\subseteq J_{i}$.
Hence
\begin{eqnarray*}
K_i&=&((L_{i_1+1},\ldots, L_n):L_{i_1})+\sum\limits_{j=1}^{p_i}(L_{i_j+1}:L_{i_j})\\
&=&((M_{i_1+1},\ldots,M_{n}):L_{i}^{(t)})+((N_{1},\ldots,N_{p_i}):L_{i}^{(t)})\\
&=&((M_{i_1+1},\ldots,M_{n},N_{1},\ldots,N_{p_i}):L_{i}^{(t)})\subseteq (J_{i}:L_i^{(t)}).
\end{eqnarray*}

We distinguish into the following two cases:

(i) If $i_1\geq 2$, then  $L_1\nmid^{\,edge}L_{i}^{(t)}$. For any monomial  $u\in \mathcal {G}(J_{i}:L_{i}^{(t)})$, then by  Theorem \ref{thm2}, there exists $L_{\ell_{1}}$, $L_{\ell_2}$, $L^{(t)}_{a}\in J_i$ for some $i+1\leq a\leq r$ such that $u\in (L_{\ell_2}:L_{\ell_1})$, $L_{\ell_{1}}>L_{\ell_2}$,  $L_{\ell_{2}}\mid^{edge}L_{a}^{(t)}$  and $L_{\ell_{1}}\mid^{edge}L_{i}^{(t)}$,
 which implies  $\ell_{2}>\ell_{1}\geq i_1$. Hence
$(L_{\ell_{2}}:L_{\ell_{1}})\subseteq K_i$, as desired.

(ii) If $i_1=1$, then $L_{1}\mid^{edge} L_{i}^{(t)}$.  By the definition of $q_i$, we get $\prod\limits_{s=0}^{j}L_{n+1-2s}\mid L_{i}^{(t)}$ for any $0\leq j\leq q_i$.
Set $T_{j}=\frac{L_{i}^{(t)}}{\prod\limits_{s=0}^{j}L_{n+1-2s}}\prod\limits_{s=0}^{j}L_{n-2s}$, we obtain  $L_{i}^{(t)}>T_{j}$.
It follows that $T_{j}\in J_{i}$. Hence
 \[
Q_i=\sum\limits_{j=0}^{q_i}(\prod\limits_{s=0}^{j}L_{n-2s}:\prod\limits_{s=0}^{j}L_{n+1-2s})=\sum\limits_{j=0}^{q_i}(T_{j}:L_{i}^{(t)})
=((T_{0},\ldots,T_{q_i}):L_{i}^{(t)})\subseteq(J_{i}:L_{i}^{(t)}).
\]
On the other hand,
$(J_{i}:L_{i}^{(t)})=((L_{i+1}^{(t)}, \ldots, L_r^{(t)}):L_{i}^{(t)})=\sum\limits_{j=i+1}^{r}(L_j^{(t)}:L_{i}^{(t)})$.
If there exists some $q_i\in\{1,\dots,\ell\}$, then $(J_{i}:L_{i}^{(t)})\subseteq K_i+Q_i$. Otherwise, $(J_{i}:L_{i}^{(t)})\subseteq K_i$.
We complete the  proof.
\end{proof}

\medskip
\section{Regularity of  powers of edge ideals of vertex-weighted oriented cycles}

In this section,  we give exact formulas for the
regularity of powers of edge ideals of vertex-weighted oriented cycles.  Meanwhile, we also give some examples to show  the  regularity of powers of edge ideals of vertex-weighted oriented cycles is related to direction selection and  the assumption  that $w(x)\geq 2$ for any vertex $x$  cannot be dropped.

A hypergraph $H=(X,\mathcal {E})$ over the vertex set $X=\{x_1, \ldots, x_n\}$
consists of $X$ and a collection $\mathcal {E}$ of nonempty subsets of $X$, these subsets are called the
edges of $H$. Let $Y\subseteq X$, the induced subhypergraph of $H$ on $Y$,
denoted by $H[Y]$, is the hypergraph with the vertex set $Y$ and the edge set $\{E\in\mathcal {E}\mid E\subseteq Y\}$. A hypergraph $H$ is simple if there is no containment between any pair of its edges.

\medskip
We need the following two lemmas.
\begin{Lemma}\label{lem8}(\cite[Lemma 3.1]{H})  Let $H$ be a simple hypergraph. Then $\mbox{reg}\,(H')\leq \mbox{reg}\,(H)$
 for any induced subhypergraph $H'$ of $H$.
\end{Lemma}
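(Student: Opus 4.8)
The plan is to translate the statement into the language of Stanley--Reisner ideals and apply Hochster's formula. Let $\Delta=\Delta(H)$ be the independence complex of $H$, the simplicial complex on the vertex set $X$ whose faces are the subsets of $X$ containing no edge of $H$. Since $H$ is simple, the minimal nonfaces of $\Delta$ are exactly the edges of $H$ (a proper subset of an edge is independent, while every nonface contains some edge), so $I(H)=I_{\Delta}$ is the Stanley--Reisner ideal of $\Delta$ in $S=k[X]$. The first step I would carry out is the purely combinatorial observation that this construction commutes with passing to induced subhypergraphs: if $H'=H[Y]$ for $Y\subseteq X$, then $\Delta(H')=\Delta_{Y}:=\{F\in\Delta\mid F\subseteq Y\}$, the restriction of $\Delta$ to $Y$, because a subset of $Y$ contains an edge of $H[Y]$ if and only if it contains an edge of $H$. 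Since extra variables not occurring in the ideal do not change the regularity of the quotient, we have $\mbox{reg}\,(H)=\mbox{reg}\,(S/I_{\Delta})$ and $\mbox{reg}\,(H')=\mbox{reg}\,(S/I_{\Delta_{Y}})$ with the same ambient ring $S$ (and, via Lemma \ref{lem2}, one obtains the equivalent statement for $\mbox{reg}\,(I(H))$ and $\mbox{reg}\,(I(H'))$).

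The second step is to invoke Hochster's formula (see \cite{HH2}): for a simplicial complex $\Delta$ on $X$,
$$\beta_{i,j}(S/I_{\Delta})=\sum_{W\subseteq X,\ |W|=j}\dim_{k}\widetilde{H}_{j-i-1}(\Delta_{W};k).$$
Writing $d=j-i$ and noting that a nonzero $(d-1)$-cycle of $\Delta_{W}$ forces $|W|\geq d$, so that the corresponding $i=|W|-d$ is admissible, this gives
$$\mbox{reg}\,(S/I_{\Delta})=\max\{\,d\geq 1\mid \widetilde{H}_{d-1}(\Delta_{W};k)\neq 0\ \text{for some}\ W\subseteq X\,\},$$
so the regularity is governed entirely by the reduced homology of the restrictions of $\Delta$.

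The third step is the comparison. The restrictions of $\Delta_{Y}$ are precisely the complexes $\Delta_{W}$ with $W\subseteq Y$, which form a subfamily of the restrictions $\{\Delta_{W}\mid W\subseteq X\}$ of $\Delta$. Hence every $d$ realized in the above formula for $\mbox{reg}\,(S/I_{\Delta_{Y}})$ is also realized for $\mbox{reg}\,(S/I_{\Delta})$, whence $\mbox{reg}\,(S/I_{\Delta_{Y}})\leq\mbox{reg}\,(S/I_{\Delta})$, i.e. $\mbox{reg}\,(H')\leq\mbox{reg}\,(H)$, and since every induced subhypergraph arises this way, the lemma follows.

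I expect no serious obstacle: once the dictionary is set up the argument is essentially bookkeeping, and the only points needing care are (i) using simplicity of $H$ to match the minimal nonfaces of $\Delta$ with the edges of $H$, and (ii) pinning down the homological degree in Hochster's formula together with the admissibility check $|W|\geq d$. An alternative that avoids Hochster's formula is to induct on $|X\setminus Y|$, reducing to the deletion of a single vertex $v$, and to use the short exact sequence $0\to (S/(I(H):x_{v}))(-1)\xrightarrow{\ \cdot x_{v}\ }S/I(H)\to S/(I(H),x_{v})\to 0$ together with Lemma \ref{lem6}(1), since $S/(I(H),x_{v})\cong k[X\setminus\{v\}]/I(H-v)$; however this route still requires the auxiliary bound $\mbox{reg}\,(S/(I(H):x_{v}))\leq\mbox{reg}\,(S/I(H))$, whose cleanest proof is again Hochster's formula, so I would present the homological argument directly.
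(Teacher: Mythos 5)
Your argument is correct: the identification of $I(H)$ with the Stanley--Reisner ideal of the independence complex (using simplicity), the compatibility with restriction to $Y$, and Hochster's formula giving $\mbox{reg}$ as the maximal homological degree over all restrictions $\Delta_W$ together yield the inequality, and the admissibility point you flag is harmless. The paper itself gives no proof, citing \cite[Lemma 3.1]{H}, and the proof there is this same Hochster-formula argument, so your proposal matches the intended route.
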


\begin{Lemma}\label{lem9}(\cite[Proposition 4.1]{LM})
Let $I\subseteq S=k[x_1,\ldots,x_n]$ be a squarefree monomial ideal satisfying every minimal generator of $I$ contains at least one variable not dividing any
other generator of $I$. Then
$$reg\,(I)=|X|-|\mathcal {G}(I)|+1 $$
where $X=supp(I)$.
\end{Lemma}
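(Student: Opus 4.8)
The plan is to argue by induction on $m=|\mathcal{G}(I)|$, using the Betti splitting tools of Lemma~\ref{lem1} and Corollary~\ref{cor1}. Write $\mathcal{G}(I)=\{u_1,\dots,u_m\}$; the hypothesis says each $u_i$ has a \emph{private} variable $y_i$ with $y_i\mid u_i$ and $y_i\nmid u_j$ for every $j\neq i$, and the $y_i$ are necessarily pairwise distinct. For $m=1$ the ideal $I=(u_1)$ is principal, so $\reg(I)=\deg u_1=|\supp(u_1)|=|X|=|X|-1+1$, giving the base case. For $m\geq 2$ the generators of $I$ divisible by $y_m$ are precisely $\{u_m\}$, so taking $J=(u_m)$ and $K=(u_1,\dots,u_{m-1})$ (with $\mathcal{G}(K)$ nonempty) and noting that $J$ is principal, hence has a linear resolution, Lemma~\ref{lem1} shows $I=J+K$ is a Betti splitting and Corollary~\ref{cor1} gives $\reg(I)=\max\{\reg(J),\reg(K),\reg(J\cap K)-1\}$.

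I would then evaluate the three quantities. First, $\reg(J)=\deg u_m=|\supp(u_m)|$, and since $y_1,\dots,y_{m-1}$ are $m-1$ distinct elements of $X\setminus\supp(u_m)$ we get $\reg(J)\leq|X|-(m-1)=|X|-m+1$. Second, $K$ again satisfies the hypothesis of the statement — each $u_i$ ($i<m$) keeps $y_i$ as a private variable inside the smaller family — so induction yields $\reg(K)=|\supp(K)|-(m-1)+1$; as $y_m\in X\setminus\supp(K)$ we have $|\supp(K)|\leq|X|-1$, hence $\reg(K)\leq|X|-m+1$. Third, which is the heart of the matter, $J\cap K=\sum_{i<m}\big((u_m)\cap(u_i)\big)=(v_1,\dots,v_{m-1})$ with $v_i:=\lcm(u_m,u_i)$, a squarefree monomial ideal. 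Here $y_i\mid v_i$ while $y_i$ divides neither $u_m$ nor any $u_j$ with $j\neq i$, so $y_i\nmid v_j$ for $j\neq i$; consequently the $v_i$ are pairwise distinct and none divides another, so $\mathcal{G}(J\cap K)=\{v_1,\dots,v_{m-1}\}$ has exactly $m-1$ elements and again satisfies the private-variable hypothesis, with $\supp(J\cap K)=\supp(u_m)\cup\supp(K)=X$. Induction then gives $\reg(J\cap K)=|X|-(m-1)+1=|X|-m+2$, so $\reg(J\cap K)-1=|X|-m+1$. Assembling the three bounds, $\reg(I)=|X|-m+1$.

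The step I expect to cause the most trouble is the analysis of $J\cap K$: one must check that passing to the intersection neither merges two generators (which would spoil the count $|\mathcal{G}(J\cap K)|=m-1$) nor destroys the private-variable structure needed to reapply the induction hypothesis. Both follow once one notices that $y_i$ survives into $v_i$ but stays outside every $v_j$ with $j\neq i$; the remaining inequalities $\reg(J),\reg(K)\leq|X|-m+1$ are then just the observation that distinct generators carry distinct private variables.

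A cleaner alternative that sidesteps the induction is to note that the private-variable hypothesis forces the Taylor complex of $I$ to be its minimal graded free resolution (see \cite{HH2}): for any nonempty $\sigma\subseteq\{1,\dots,m\}$ and any $j\in\sigma$, the variable $y_j$ divides $\lcm(u_i:i\in\sigma)$ but not $\lcm(u_i:i\in\sigma\setminus\{j\})$, so every entry of every Taylor differential is a non-unit. Hence $\beta_{|\sigma|-1,\,\deg\lcm(u_i:i\in\sigma)}(I)\neq 0$ for each such $\sigma$, and $\reg(I)=\max_{\emptyset\neq\sigma}\big(\deg\lcm(u_i:i\in\sigma)-|\sigma|+1\big)$. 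Enlarging $\sigma$ by one index raises $\deg\lcm$ by at least $1$ (once more because of a fresh private variable) and raises $|\sigma|$ by exactly $1$, so the maximum is attained at $\sigma=\{1,\dots,m\}$, yielding $\deg\lcm(u_1,\dots,u_m)-m+1=|X|-m+1$.
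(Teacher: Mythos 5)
Your proposal is correct, and it is worth noting that the paper offers no argument of its own for this statement: Lemma \ref{lem9} is quoted as a black box from \cite[Proposition 4.1]{LM}. What you supply is therefore genuinely additional content. Your first argument is a self-contained induction on $|\mathcal{G}(I)|$ built entirely out of the paper's own splitting toolkit: since the private variable $y_m$ occurs in no generator but $u_m$, the splitting $I=(u_m)+K$ satisfies the hypothesis of Lemma \ref{lem1} (a principal monomial ideal trivially has a linear resolution), and the delicate point --- that $J\cap K=(\lcm(u_m,u_i):i<m)$ has exactly $m-1$ pairwise incomparable generators, inherits the private-variable property via the surviving $y_i$, and has support equal to all of $X$ --- is exactly what makes $\reg(J\cap K)-1=|X|-m+1$ the term that realizes the maximum in Corollary \ref{cor1}, while your bounds $\reg(J),\reg(K)\le |X|-m+1$ (distinct private variables of the other generators missing from $\supp(u_m)$, resp.\ $y_m\notin\supp(K)$) keep the other two terms below it; all of these checks are sound. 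Your second argument, via minimality of the Taylor complex, is essentially the route taken in \cite{LM} itself: the private variables force every entry $\lcm_{\sigma}/\lcm_{\sigma\setminus\{j\}}$ of the Taylor differentials to be a non-unit, so the Betti numbers are read off directly, and the monotonicity of $\deg\lcm_\sigma-|\sigma|$ under enlarging $\sigma$ (again by a fresh private variable) places the maximum at the full index set, giving $|X|-m+1$. In short, the first proof buys compatibility with the Betti-splitting framework the paper already uses elsewhere (Theorems \ref{thm4} and \ref{thm5}), while the second is shorter, gives all graded Betti numbers at once, and explains why the cited result of \cite{LM} holds; either one would serve as a legitimate replacement for the citation.
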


 For convenience,  all of notations used in the following two propositions and Theorem \ref{thm4}  are as those of Theorem  \ref{thm3}.
\begin{Proposition}\label{prop1}
Let $L^{(t)}$,  $L_i^{(t)}$,$J_i$, $K_i$ and $Q_i$ be as Theorem \ref{thm3}. For any $1\leq i\leq r-1$,
\begin{enumerate}
\item[(1)]  If $i_1=1$ and $q_i=0$, then $\mbox{reg}\,((J_i:L_{i}^{(t)}))=\sum\limits_{j=2}^{n}w_j-n+1$;
\item[(2)] If $i_1\geq 2$, then $\mbox{reg}\,((J_i:L_{i}^{(t)}))=\sum\limits_{j=i_1+1}^{n}w_j-(n-i_1)+1$.
\end{enumerate}
\end{Proposition}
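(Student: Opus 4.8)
I want to compute $\reg\big((J_i:L_i^{(t)})\big)$ in the two stated cases, and in both cases the colon ideal is controlled explicitly by Theorem~\ref{thm3}. The strategy is to identify the colon ideal as (up to a shift by a monomial on disjoint variables, handled by Lemma~\ref{lem4}) a squarefree monomial ideal in which every minimal generator contains a variable not dividing any other generator, so that Lemma~\ref{lem9} applies and the regularity is $|X|-|\mathcal G|+1$; then I just count variables and generators.

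\medskip
First consider case (2), $i_1\geq 2$. By Theorem~\ref{thm3}(2), $(J_i:L_i^{(t)})=K_i=((L_{i_1+1},\ldots,L_n):L_{i_1})+\sum_{j=1}^{p_i}(L_{i_j+1}:L_{i_j})$. I would first simplify each colon: since $L_{i_j}=x_{i_j-1}x_{i_j}^{w_{i_j}}$ and $L_{i_j+1}=x_{i_j}x_{i_j+1}^{w_{i_j+1}}$, we get $(L_{i_j+1}:L_{i_j})=(x_{i_j+1}^{w_{i_j+1}})$ when $i_j+1\le n$ (and the boundary term is treated via $p_i$); similarly $(L_{a}:L_{i_1})=(x_{a-1}x_a^{w_a})$ for $a>i_1+1$ while $(L_{i_1+1}:L_{i_1})=(x_{i_1+1}^{w_{i_1+1}})$. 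So $K_i$ is generated by the monomials $x_{i_1+1}^{w_{i_1+1}},\ x_{i_1+1}x_{i_1+2}^{w_{i_1+2}},\ x_{i_1+2}x_{i_1+3}^{w_{i_1+3}},\ \ldots,\ x_{n-1}x_n^{w_n}$, together with the pure powers $x_{i_j+1}^{w_{i_j+1}}$ coming from the tree/cycle vertices $i_j$ with $j\le p_i$. After discarding non-minimal generators (a pure power $x_{a}^{w_a}$ kills $x_{a-1}x_a^{w_a}$), the minimal generators are supported on $\{x_{i_1+1},\ldots,x_n\}$, and I would check directly that each minimal generator has a "private" variable: the pure power $x_{i_1+1}^{w_{i_1+1}}$ owns $x_{i_1+1}$ after polarization, and each $x_{a-1}x_a^{w_a}$ owns $x_a$ (no later generator is divisible by $x_a^{w_a}$ once we have passed the pure powers). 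Polarizing (Lemma~\ref{lem5}) and applying Lemma~\ref{lem9}: $|X|=\sum_{j=i_1+1}^n w_j$ (one $x_j$ has multiplicity $w_j$ after polarization) and $|\mathcal G|=n-i_1$, giving $\reg=\sum_{j=i_1+1}^n w_j-(n-i_1)+1$, exactly as claimed. The one point requiring care is the correct count of minimal generators after the pure powers are absorbed; this is a short combinatorial check using that the $L_a$'s form a path and $w\ge 2$.

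\medskip
For case (1), $i_1=1$ and $q_i=0$, Theorem~\ref{thm3}(1) gives $(J_i:L_i^{(t)})=K_i+Q_i$ with $Q_i=(L_n:L_{n+1})=(x_{n-1}x_n^{w_n}:x_nx_1^{w_1})$; since $x_1\nmid L_i^{(t)}$-wait, $i_1=1$ so $L_1\mid^{edge}L_i^{(t)}$, and $(L_n:L_{n+1})=(x_{n-1}x_n^{w_n-1})$. Here $K_i=((L_2,\ldots,L_n):L_1)+\sum_{j}(L_{i_j+1}:L_{i_j})$, and $(L_2:L_1)=(x_2^{w_2})$, $(L_a:L_1)=(x_{a-1}x_a^{w_a})$ for $a\ge 3$. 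So $K_i+Q_i$ is generated (after removing non-minimal elements, the pure power $x_2^{w_2}$ and the pure powers from the $i_j$'s absorbing the corresponding path generators) by monomials supported on $\{x_2,\ldots,x_n\}$ — crucially the extra generator $x_{n-1}x_n^{w_n-1}$ from $Q_i$ is divisible by $x_{n-1}x_n^{w_n-1}\mid x_{n-1}x_n^{w_n}$, so it makes $x_{n-1}x_n^{w_n}$ (which came from $(L_n:L_1)$) non-minimal and replaces it, without enlarging the support or changing the generator count. Then the same private-variable check and Lemma~\ref{lem9} give $|X|=\sum_{j=2}^n w_j$, $|\mathcal G|=n-1$, hence $\reg=\sum_{j=2}^n w_j-n+1$.

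\medskip
\textbf{Main obstacle.} The genuine work is verifying, after simplifying all the colons to pure powers $x_a^{w_a}$ and path-monomials $x_{a-1}x_a^{w_a}$ (and the one anomalous $x_{n-1}x_n^{w_n-1}$ in case (1)), that the resulting minimal generating set is exactly what Lemma~\ref{lem9} needs — i.e.\ no unexpected containments occur and each minimal generator retains a private polarized variable. This uses the hypothesis $w(x)\ge 2$ in an essential way (to know $x_a^{w_a}$ is still a proper divisor situation and $x_n^{w_n-1}\ne 1$), and uses that the generators trace out a path on consecutive indices $i_1+1,\ldots,n$, so the combinatorics is that of an edge ideal of a path plus isolated loops, for which Lemma~\ref{lem9} is tailor-made. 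Once the minimal generating set and its support are pinned down, the regularity formula is an immediate count.
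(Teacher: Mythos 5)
Your overall route is the same as the paper's: use Theorem \ref{thm3} to write the colon ideal $(J_i:L_i^{(t)})$ explicitly, polarize (Lemma \ref{lem5}), verify the hypothesis of Lemma \ref{lem9} by exhibiting a private polarized variable in each minimal generator, and count variables and generators. Your case (2) is correct and matches the paper. In case (1), however, your bookkeeping is internally inconsistent and, as written, produces the wrong number. You compute $Q_i=(L_n:L_1)=(x_{n-1}x_n^{w_n-1})$ correctly, yet when expanding $K_i$ you assert $(L_a:L_1)=(x_{a-1}x_a^{w_a})$ for all $a\ge 3$; this fails at $a=n$, because $L_1=x_nx_1^{w_1}$ shares $x_n$ with $L_n$, so $(L_n:L_1)=(x_{n-1}x_n^{w_n-1})$ already. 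Hence there is no generator $x_{n-1}x_n^{w_n}$ to be ``replaced'': in fact $K_i+Q_i=K_i$. The consequence you missed is in the support count: since the minimal generator involving $x_n$ is $x_{n-1}x_n^{w_n-1}$, the polarized variable $x_{n,w_n}$ never occurs, so $|X|=\sum_{j=2}^{n}w_j-1$, not $\sum_{j=2}^{n}w_j$. With your stated counts $|X|=\sum_{j=2}^{n}w_j$ and $|\mathcal G|=n-1$, Lemma \ref{lem9} would give $\sum_{j=2}^{n}w_j-n+2$, one more than the proposition claims; with the corrected $|X|$ you get exactly $\sum_{j=2}^{n}w_j-n+1$. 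This $-1$ is precisely what distinguishes case (1) from the naive extension of the case (2) pattern, so it cannot be absorbed into the phrase ``without enlarging the support.''

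A second point you should make explicit in case (1): the hypothesis $q_i=0$ means $L_{n-1}\nmid^{edge}L_i^{(t)}$, which together with the definition of $p_i$ gives $i_{p_i}<n-1$; therefore no pure power $x_n^{w_n}$ enters $K_i$, the minimal generator count stays at $n-1$, and the generator $x_{n-1}x_n^{w_n-1}$ survives with its private variables $x_{n,k}$. With these two repairs your argument coincides with the paper's proof.
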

\begin{proof}
(1) If  $q_i=0$, then $L_{n-1}\nmid^{\,edge}L_{i}^{(t)}$. If $i_1=1$, then $i_{p_i}<n-1$ by the definition of $p_i$. Thus
\begin{eqnarray*}
K_i&=&((L_{2},\ldots, L_n):L_{1})+\sum\limits_{j=1}^{p_i}(L_{i_j+1}:L_{i_j})\\
&=&(x_{2}^{w_{2}},x_{2}x_{3}^{w_{3}},\ldots,x_{n-1}x_n^{w_n-1})+\sum\limits_{j=1}^{p_i}(x_{i_j+1}^{w_{i_j+1}}),\\
(J_{i}:L_{i}^{(t)})&=&K_i+Q_i=K_i.
 \end{eqnarray*}
Let $K_i^{\mathcal {P}}$ be the polarization of the ideal $K_i$, then  $|\supp\,(K_i^{\mathcal {P}})|=\sum\limits_{j=2}^{n}w_j-1$ and $|\mathcal {G}(K_i^{\mathcal {P}})|=n-1$.
Notice a fact that  $x_{j,w_j}$ is only a factor of the unique monomial $x_{j-1,1}\prod\limits_{k=1}^{w_{j}}x_{j,k}$ or  $\prod\limits_{k=1}^{w_{j}}x_{j,k}$   of  the set $\mathcal {G}(K_i^{\mathcal {P}})$ for any $2\leq j\leq n-1$ and $x_{n,w_n-1}$ is also only a factor of the unique monomial $x_{n-1,1}\prod\limits_{j=1}^{w_{n}-1}x_{n,j}$ of  the set $\mathcal {G}(K_i^{\mathcal {P}})$.
Hence  by Lemma \ref{lem5} (2) and Lemma \ref{lem9}, we obtain
\begin{eqnarray*}
\mbox{reg}\,((J_{i}:L_{i}^{(t)}))&=&\mbox{reg}\,(K_i)=\mbox{reg}\,(K_i^{\mathcal {P}})=|supp(K_i^{\mathcal {P}})|-|\mathcal {G}(K_i^{\mathcal {P}})|+1\\
&=&(\sum\limits_{j=2}^{n}w_j-1)-(n-1)+1=\sum\limits_{j=2}^{n}w_j-n+1.
\end{eqnarray*}

(2) If $i_1\geq2$, then by Theorem \ref{thm3} (2), we obtain
\begin{eqnarray*}
(J_{i}:L_{i}^{(t)})&=&K_i=((L_{i_1+1},\ldots, L_n):L_{i_1})+\sum\limits_{j=1}^{p_i}(L_{i_j+1}:L_{i_j})\\
&=&(x_{i_1+1}^{w_{i_1+1}},x_{i_1+1}x_{i_1+2}^{w_{i_1+2}},\ldots,x_{n-1}x_n^{w_n})+\sum\limits_{j=1}^{p_i}(x_{i_j+1}^{w_{i_j+1}})
\end{eqnarray*}
Let $K_i^{\mathcal {P}}$ be the polarization of the ideal $K_i^{\mathcal {P}}$, then  $|\supp\,(K_i^{\mathcal {P}})|=\sum\limits_{j=i_1+1}^{n}w_j$ and $|\mathcal {G}(K_i^{\mathcal {P}})|=n-i_1$. Similar arguments as the proof of (1), we get
$$
\mbox{reg}\,((J_{i}:L_{i}^{(t)}))=\sum\limits_{j=i_1+1}^{n}w_j-(n-i_1)+1.
$$
\end{proof}

\begin{Proposition}\label{prop2}
Let $L^{(t)}$,  $L_i^{(t)}$,$J_i$, $K_i$ and $Q_i$ be as Theorem \ref{thm3}.   For any $1\leq i\leq r-1$. If $i_1=1$ and $q_i\geq1$, then $$\mbox{reg}\,((J_i:L_{i}^{(t)}))\leq \sum\limits_{j=2}^{n}w_j-n+1.$$
\end{Proposition}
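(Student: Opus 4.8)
The plan is to reduce, via the structure results of Theorem \ref{thm3} and Theorem \ref{thm2}, to the ideal $K_i$ treated in Proposition \ref{prop1}, and then control the extra summand $Q_i$ by an exact-sequence argument. Concretely, write $(J_i:L_i^{(t)})=K_i+Q_i$ as in Theorem \ref{thm3}(1), where
$$Q_i=\sum_{j=0}^{q_i}\Bigl(\prod_{s=0}^{j}L_{n-2s}:\prod_{s=0}^{j}L_{n+1-2s}\Bigr).$$
Each generator of $Q_i$ is, up to the colon, a monomial supported on the variables $x_{n-2j-1},\dots,x_n$; in particular, after carrying out the colon one checks that every generator of $Q_i$ lies in the ideal $(x_n^{w_n-1})$ (the first summand $(L_n:L_1)=(x_{n-1}x_n^{w_n-1})$ already does, and the higher terms only add more factors). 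More usefully, one verifies from the definitions that $Q_i\subseteq K_i$ is \emph{not} quite true, so instead I would show that $K_i+Q_i$ and $K_i$ have comparable regularity by comparing their polarizations directly.

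First I would polarize: by Lemma \ref{lem5}(2), $\mbox{reg}\,((J_i:L_i^{(t)}))=\mbox{reg}\,((K_i+Q_i)^{\mathcal{P}})$. Here $(K_i+Q_i)^{\mathcal{P}}=K_i^{\mathcal{P}}+Q_i^{\mathcal{P}}$, and from Proposition \ref{prop1}(1) we already know $\mbox{reg}\,(K_i^{\mathcal{P}})=\sum_{j=2}^{n}w_j-n+1$ with $K_i^{\mathcal{P}}$ satisfying the hypothesis of Lemma \ref{lem9} on the variable set $X=\supp(K_i^{\mathcal{P}})$. The key observation is that every generator of $Q_i^{\mathcal{P}}$ is supported only on variables $x_{j,k}$ with $2\le j\le n$ that already occur in $\supp(K_i^{\mathcal{P}})$ — in fact on the subset of variables indexed by $n-2q_i-1,\dots,n$ — so adding $Q_i^{\mathcal{P}}$ enlarges the generating set but not the support. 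Then I would run an induction on $q_i$, peeling off one generator $T=\mathcal{P}\bigl(\prod_{s=0}^{q_i}L_{n-2s}:\prod_{s=0}^{q_i}L_{n+1-2s}\bigr)$ of $Q_i^{\mathcal{P}}$ at a time and using the short exact sequence
$$0\longrightarrow \frac{S^{\mathcal{P}}}{(I':T)}(-\deg T)\longrightarrow \frac{S^{\mathcal{P}}}{I'}\longrightarrow \frac{S^{\mathcal{P}}}{I'+(T)}\longrightarrow 0$$
together with Lemma \ref{lem6} and Lemma \ref{lem2}, where $I'$ is the ideal generated by $K_i^{\mathcal{P}}$ and the already-processed generators of $Q_i^{\mathcal{P}}$. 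At each stage $(I':T)$ is again a squarefree monomial ideal on a subset of the same variables, so Lemma \ref{lem8} (applied to the associated simple hypergraphs) bounds $\mbox{reg}\,((I':T))$ by $\mbox{reg}\,(K_i^{\mathcal{P}})$, and the degree shift is absorbed because $\deg T$ is small relative to $|X|$.

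The main obstacle I anticipate is the bookkeeping in the induction step: one must show that the colon ideal $(I':T)$ — whose generators come from the "Betti-splitting-type" relations between the path-ideal part $((L_{i_1+1},\dots,L_n):L_{i_1})$, the single-variable part $\sum(L_{i_j+1}:L_{i_j})$, and the remaining $Q_i^{\mathcal{P}}$ generators — is still a squarefree ideal whose support is contained in $\supp(K_i^{\mathcal{P}})$ and for which the regularity bound $\sum_{j=2}^n w_j-n+1$ persists. This is where Theorem \ref{thm2} is essential: any generator of such a colon is subordinate to some $(L_{\ell_2}:L_{\ell_1})$ with $\ell_2>\ell_1\ge i_1=1$, hence lies in $K_i$ modulo the variables already accounted for, so no new support variable and no regularity increase is introduced. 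Once that containment is established, the exact-sequence estimate gives $\mbox{reg}\,((J_i:L_i^{(t)}))\le\max\{\mbox{reg}\,(K_i^{\mathcal{P}}),\,\mbox{reg}\,((I':T))+\deg T-1,\dots\}$, and careful tracking of the degree shifts shows the right-hand side never exceeds $\sum_{j=2}^{n}w_j-n+1$, as claimed.
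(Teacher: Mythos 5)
Your overall skeleton matches the paper's: you keep the decomposition $(J_i:L_i^{(t)})=K_i+Q_i$ from Theorem \ref{thm3}(1), you plan to add the generators $u_0,\dots,u_{q_i}$ of $Q_i$ one at a time, and at each stage you use the short exact sequence $0\to S/(I':T)(-\deg T)\to S/I'\to S/(I'+(T))\to 0$ with Lemma \ref{lem2} and Lemma \ref{lem6}(1). This is exactly the structure of the paper's proof (there $T_j=K_i+(u_0,\dots,u_j)$ and one inducts on $j$). However, the step that actually carries the proof is missing in your proposal. From the exact sequence one gets $\mbox{reg}\,(I'+(T))\leq\mbox{max}\,\{\mbox{reg}\,((I':T))+\deg T-1,\ \mbox{reg}\,(I')\}$, and $\deg T=\deg u_{q_i}$ is \emph{large}: it equals $\sum_{j=0}^{q_i}w_{n-2j}-q_i$ (or $\sum_{j=1}^{n/2}w_{2j}-n/2$ in the extreme case). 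So a bound of the form $\mbox{reg}\,((I':T))\leq\mbox{reg}\,(K_i^{\mathcal P})=\sum_{j=2}^n w_j-n+1$ is useless here; after adding $\deg T-1$ it overshoots the target by roughly $\deg T$. What is needed, and what the paper proves, is that the colon $(T_{q_i-1}:u_{q_i})$ is a very \emph{small} ideal: the paper computes it explicitly in three cases and finds it is generated mostly by variables (plus a few low-degree monomials $B$, $x_2^{w_2-1}$, etc.), so that $\mbox{reg}\,((T_{q_i-1}:u_{q_i}))+\deg u_{q_i}\leq\sum_{j=2}^n w_j-n+1$, the slack coming precisely from the hypothesis $w_{n-2j+1}\geq 2$ on the odd-indexed weights that do not appear in $\deg u_{q_i}$. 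Your phrase ``the degree shift is absorbed because $\deg T$ is small relative to $|X|$'' is not a valid mechanism — regularity bounds do not trade degree against the size of the support — and you never compute the colon, so the absorption is asserted rather than proved.

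A second, related problem is the tool you invoke for the colon: Lemma \ref{lem8} applies to \emph{induced subhypergraphs}, and a colon ideal such as $(I':T)$ is not the edge ideal of an induced subhypergraph of the hypergraph of $K_i^{\mathcal P}$ (its generators are quotients of generators, not a subset of them restricted to a vertex subset), so the inequality $\mbox{reg}\,((I':T))\leq\mbox{reg}\,(K_i^{\mathcal P})$ does not follow from it; and, as noted above, even if it did it would be the wrong kind of bound. Likewise, your appeal to Theorem \ref{thm2} (``any generator of such a colon is subordinate to some $(L_{\ell_2}:L_{\ell_1})$'') describes colons of the form $(J_i:L_i^{(t)})$, not colons of partial sums $T_{j-1}$ by a single generator $u_j$, so it does not identify $(T_{j-1}:u_j)$. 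To repair the argument you must do what the paper does: handle the base case $T_0=K_i'+(x_n^{w_n})$ by computing $(K_i':x_n^{w_n})=P_i+(x_{n-1})$ and bounding its regularity via polarization and Lemma \ref{lem9} (using $w_{n-1}\geq 2$), and in the induction step compute $(T_{q_i-1}:u_{q_i})$ explicitly, distinguish the cases $n$ even/odd with $q_i=\lfloor n/2\rfloor-1$ and the general case, and verify in each that the regularity of the colon plus $\deg u_{q_i}$ stays below $\sum_{j=2}^{n}w_j-n+1$ because each skipped weight satisfies $w_{n-2j+1}\geq 2$.
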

\begin{proof} Since $i_1=1$ and $q_i\geq1$, we have  $L_{j}\mid^{edge}L_{i}^{(t)}$ for $j=1,n-1$. It follows that  $i_{p_i}=n-1$.
Thus
\begin{eqnarray*}
\hspace{2.0cm} K_i&=&(x_{2}^{w_{2}},x_{2}x_{3}^{w_{3}},\ldots,x_{n-1}x_n^{w_n-1})+\sum\limits_{j=1}^{p_i}(x_{i_j+1}^{w_{i_j+1}}),\hspace{3.0cm}(1)\\
\hspace{2.0cm} Q_i&=&\sum\limits_{j=0}^{q_i}(\prod\limits_{s=0}^{j}L_{n-2s}:\prod\limits_{s=0}^{j}L_{n+1-2s})=(u_0,u_1,\ldots, u_{q_i}), \hspace{2.3cm}(2)
\end{eqnarray*}
where monomial $u_j=\frac{\prod\limits_{s=0}^{j}L_{n-2s}}{gcd(\prod\limits_{s=0}^{j}L_{n-2s},\prod\limits_{s=0}^{j}L_{n+1-2s})}$ for   $0\leq j\leq q_i$.

Let $$T_{j}=K_i+(u_0,u_{1}\ldots,u_{j})\ \  \text{for any}\ \ 0\leq j\leq q_i,\eqno(3)$$
then $(J_{i}:L_{i}^{(t)})=K_i+Q_i=T_{q_i}$.

For  $0\leq j\leq q_i$, we will prove
$$\mbox{reg}\,(T_{j})\leq \sum\limits_{j=2}^{n}w_j-n+1, \eqno(4)$$
thus the result follows.

Now we prove  formulas  (4) by induction on $j$.

If  $j=0$, then
$$T_{0}=K_i+(u_0)=K'_i+(x_n^{w_n}),$$
 where $K'_i=(x_{2}^{w_{2}},x_{2}x_{3}^{w_{3}},\ldots,x_{n-1}x_n^{w_n-1})+\sum\limits_{j=1}^{p_i-1}(x_{i_j+1}^{w_{i_j+1}})$.

Let ${K'_i}^{\mathcal {P}}$ be the polarization of the ideal $K'_i$, then  $|\supp\,({K'_i}^{\mathcal {P}})|=\sum\limits_{j=2}^{n}w_j-1$ and $|\mathcal {G}({K'_i}^{\mathcal {P}})|=n-1$.
Since  $x_{j,w_j}$ is only a factor of the unique monomial $x_{j-1,1}\prod\limits_{k=1}^{w_{j}}x_{j,k}$ or  $\prod\limits_{k=1}^{w_{j}}x_{j,k}$ of  the set $\mathcal {G}({K'_i}^{\mathcal {P}})$ for any $2\leq j\leq n-1$ and $x_{n,w_n-1}$ is also only a factor of the unique monomial $x_{n-1,1}\prod\limits_{j=1}^{w_{n}-1}x_{n,j}$ of  the set $\mathcal {G}({K'_i}^{\mathcal {P}})$,
 we obtain  by Lemma \ref{lem5} (2) and Lemma \ref{lem9},
\[
\mbox{reg}\,(K'_i)=\mbox{reg}\,({K'_i}^{\mathcal {P}})=|\supp\,({K'_i}^{\mathcal {P}})|-|\mathcal {G}({K'_i}^{\mathcal {P}})|+1\\
=\sum\limits_{j=2}^{n}w_j-n+1. \eqno (5)
\]
Notice that $(K'_i:x_n^{w_n})=P_i+(x_{n-1})$,
where $P_i=(x_2^{w_2},x_2x_3^{w_3},\ldots,x_{n-3}x_{n-2}^{w_{n-2}})+\sum\limits_{j=1}^{p'_i}(x_{i_j+1}^{w_{i_j+1}})$,
where  if $i_{p_i-1}=n-2$, then  ${p'_i}=p_{i}-2$ otherwise, ${p'_i}=p_{i}-1$.

Let $P_i^{\mathcal {P}}$ be the polarization of the ideal $P_i$, then  $|\supp\,(P_i^{\mathcal {P}})|=\sum\limits_{j=2}^{n-2}w_j$ and $|\mathcal {G}(P_i^{\mathcal {P}})|=n-3$.  Similar arguments as above, we have
\begin{eqnarray*}
\mbox{reg}\,((K'_i:x_n^{w_n})(-w_n))\!\!&=&\!\!\!\mbox{reg}\,(P_i+(x_{n-1}))+w_n=\mbox{reg}\,(P_i)+w_n=\mbox{reg}\,(P_i^{\mathcal {P}})+w_n\\
\!\!&=&\!\!\!\sum\limits_{j=2}^{n-2}w_j-(n-3)+1+w_n\leq \sum\limits_{j=2}^{n}w_j-n+2  \hspace{2.0cm}(6)
\end{eqnarray*}
where the inequality holds because of $w_{n-1}\geq2$.

Using formulas  (5) and (6), Lemma \ref{lem2}  and Lemma \ref{lem6} (1)  on the short exact sequence
$$0\longrightarrow \frac{S}{(K'_i:x_n^{w_n})}(-w_n)\stackrel{ \cdot x_n^{w_n}} \longrightarrow \frac{S}{K'_i}\longrightarrow \frac{S}{T_0}\longrightarrow 0, $$
we have
$$\mbox{reg}\,(T_0)\leq\sum\limits_{j=2}^{n}w_j-n+1. $$

Suppose the formulas  (4) is  true for any $1\leq j\leq q_i-1$. Now assume $j=q_i$.
We first compute $(T_{q_i-1}:u_{q_i})$. Since   $K_i=((L_{i_1+1},\ldots, L_n):L_{i_1})+\sum\limits_{j=1}^{p_i}(L_{i_j+1}:L_{i_j})$ and
$T_{q_i-1}=K_i+(u_0,u_{1},\ldots,u_{q_i-1})$,  we obtain by simple calculation
\begin{eqnarray*}
& &((\sum\limits_{j=0}^{q_i-1}u_j): u_{q_i})=\sum\limits_{j=0}^{q_i-1}(u_j: u_{q_i})=\sum\limits_{j=0}^{q_i-1}((\prod\limits_{s=0}^{j}L_{n-2s}:\prod\limits_{s=0}^{j}L_{n+1-2s}): u_{q_i})\\
&=&\sum\limits_{j=0}^{q_i-1}(x_{n-2j-1})=(x_{n-2q_i+1},x_{n-2q_i+3},\ldots,x_{n-1}),
\end{eqnarray*}
\begin{eqnarray*}
& &((\sum\limits_{j=1}^{p_i}(L_{i_j+1}:L_{i_j})): u_{q_i})=\sum\limits_{j=1}^{p_i}((L_{i_j+1}:L_{i_j}):u_{q_i})=\sum\limits_{j=1}^{p_i}(x_{i_j+1}^{w_{i_j+1}}:u_{q_i})\\
&=&\left\{\begin{array}{ll}
(x_2)+\sum\limits_{j=0}^{q_i-1}(x_{n-2s}),&\ \text{if}\ n \ \text{is even and}\ q_i=\lfloor\frac{n}{2}\rfloor-1,\\
\sum\limits_{j=1}^{p_i}(x_{i_j+1}^{w_{i_j+1}})+\sum\limits_{j=0}^{q_i-1}(x_{n-2s}),&\ \text{otherwise},\\
\end{array}\right.
\end{eqnarray*}
\begin{eqnarray*}
& &(T_{q_i-1}:u_{q_i})=((K_i+(u_0,u_{1}\ldots,u_{q_i-1})):u_{q_i})\\
&=&((\sum\limits_{j=i_1+1}^{n}L_{j}):L_{i_1}):u_{q_i})+(\sum\limits_{j=1}^{p_i}(L_{i_j+1}:L_{i_j}):u_{q_i})+((\sum\limits_{j=0}^{q_i-1}u_j): u_{q_i})\\
&=&\sum\limits_{j=i_1+1}^{n}((L_{j}:L_{i_1}):u_{q_i})+\sum\limits_{j=1}^{p_i}((L_{i_j+1}:L_{i_j}):u_{q_i})+((\sum\limits_{j=0}^{q_i-1}u_j): u_{q_i})\\
&=&\left\{\begin{array}{ll}
\hspace{-0.2cm}(A+(x_2))+\sum\limits_{j=3}^{n}(x_j),&\text{if}\ n\ \text{is even,}\ q_i=\lfloor\frac{n}{2}\rfloor-1\\
\hspace{-0.2cm}(A+(x_{2}^{w_{2}-1},x_3))+\!\!\sum\limits_{j=4}^{n}(x_j),&\text{if}\ n\ \text{is odd,}\ q_i=\lfloor\frac{n}{2}\rfloor-1\\
\hspace{-0.2cm}(A+B+(x_{n-2q_i}))+\!\!\sum\limits_{j=1}^{p_i}(x_{i_j+1}^{w_{i_j+1}})+\!\!\sum\limits_{j=n-2q_i+1}^{n}(x_{j}),&\text{otherwise},\\
\end{array}\right.\\
&=&\left\{\begin{array}{ll}
\sum\limits_{j=2}^{n}(x_j),&\text{if}\ n \ \text{is even and}\ q_i=\lfloor\frac{n}{2}\rfloor-1\\
(x_{2}^{w_{2}-1})+\sum\limits_{j=3}^{n}(x_j),&\text{if}\ n \ \text{is odd and}\ q_i=\lfloor\frac{n}{2}\rfloor-1\\
B+\sum\limits_{j=1}^{p''_i}(x_{i_j+1}^{w_{i_j+1}})+\sum\limits_{j=n-2q_i}^{n}(x_{j}),&\text{otherwise},\\
\end{array}\right.
\end{eqnarray*}
where $A=\sum\limits_{j=2}^{q_i}(x_{n-2j+1}^{w_{n-2j+1}},x_{n-2j+1}x_{n-2j+2})+(x_{n-1})$, $B=(x_{2}^{w_{2}},x_{n-2q_i-2}x_{n-2q_i-1}^{w_{n-2q_i-1}-1})+\sum\limits_{j=2}^{n-2(q_i+1)}(x_{j}x_{j+1}^{w_{j+1}})$
and ${p''_i}=\mbox{max}\,\{{p''_i}: 1\leq p''_i\leq p_i\ \mbox{and } i_{p''_i}\leq  n-2q_i-2\}$.

Next we  compute $\mbox{reg}\,((T_{q_i-1}:u_{q_i}))$.  Let $d$ be the degree of monomial $u_{q_i}$. If $q_i=\lfloor\frac{n}{2}\rfloor-1$ and $n$ is even, then $d=\sum\limits_{j=1}^{\frac{n}{2}}w_{2j}-\frac{n}{2}$, otherwise, $d=\sum\limits_{j=0}^{q_i}w_{n-2j}-q_i$. We distinguish into the following three case:

(i) If $n=2m$  and  $q_i=m-1$,  then by Lemma \ref{lem3},
\begin{eqnarray*}
\mbox{reg}\,((T_{q_i-1}:u_{q_i})(-d))&=&\mbox{reg}\,((T_{q_i-1}:u_{q_i}))+d=\mbox{reg}\,(\sum\limits_{j=2}^{n}(x_j))+d\\
&=&1+(\sum\limits_{j=1}^{m}w_{2j}-m)=(\sum\limits_{j=2}^{n}w_{j}-n+1)+(m-\sum\limits_{j=2}^{m}w_{2j-1})\\
&\leq&\sum\limits_{j=2}^{n}w_j-n+1.
\end{eqnarray*}

(ii) If $n=2m+1$ and $q_i=m-1$, then by Lemma \ref{lem3} (1),
\begin{eqnarray*}
\mbox{reg}\,((T_{q_i-1}:u_{q_i})(-d))&=&\mbox{reg}\,((T_{q_i-1}:u_{q_i}))+d=\mbox{reg}\,((x_{2}^{w_{2}-1})+\sum\limits_{j=3}^{n}(x_j))+d\\
&=&(\sum\limits_{j=0}^{m-1}w_{n-2j}-(m-1))+(w_2-1)\\
&=&\!(\sum\limits_{j=2}^{n}w_{j}-n+1)+(m-\sum\limits_{j=1}^{m-1}w_{n-2j+1})\\
&\leq&\sum\limits_{j=2}^{n}w_j-n+1.
\end{eqnarray*}

(iii) In other cases, by Lemma \ref{lem3}, we have
\begin{eqnarray*}
& &\mbox{reg}\,((T_{q_i-1}:u_{q_i})(-d))=\mbox{reg}\,((T_{q_i-1}:u_{q_i}))+d\\
&=& \mbox{reg}\,(B+\sum\limits_{j=1}^{p''_i}(x_{i_j+1}^{w_{i_j+1}})+\sum\limits_{ j=n-2q_i}^{n}(x_{j}))+d
=\mbox{reg}\,(B+\sum\limits_{j=1}^{p''_i}(x_{i_j+1}^{w_{i_j+1}}))+d\\
&\leq&(\sum\limits_{j=2}^{n-2q_i-1}\!\!w_{j}-(n-2q_i-1)+1)+(\sum\limits_{j=0}^{q_i}w_{n-2j}-q_i)\\
&=&(\sum\limits_{j=2}^{n}w_{j}-n+1)+(q_i+1-\sum\limits_{j=1}^{q_i}w_{n-2j+1})\leq \sum\limits_{j=2}^{n}w_j-n+1,
\end{eqnarray*}
where the first inequality holds because of $\mbox{reg}\,(B+\sum\limits_{j=1}^{p''_i}(x_{i_j+1}^{w_{i_j+1}}))\leq
\sum\limits_{j=2}^{n-2q_i-1}\!\!w_{j}-(n-2q_i-1)+1$  by similar arguments as the calculation of $\mbox{reg}\,(T_0)$.

Using the above formulas of $\mbox{reg}\,((T_{q_i-1}:u_{q_i})(-d))$,  Lemma \ref{lem2}, Lemma \ref{lem6} (1) and the induction hypothesis on the short exact sequence
$$0\longrightarrow \frac{S}{(T_{q_i-1}:u_{q_i})}(-d)\stackrel{ \cdot u_{q_i}} \longrightarrow \frac{S}{T_{q_i-1}}\longrightarrow \frac{S}{T_{q_i}}\longrightarrow 0,$$
we have
$$\mbox{reg}\,((J_{i}:L_{i}^{(t)}))=\mbox{reg}\,(T_{q_i})\leq \sum\limits_{j=2}^{n}w_j-n+1. $$
The proof is complete.
\end{proof}

\medskip
The following Theorem is main result in this section.
¡®\begin{Theorem}\label{thm4}
Let  $C_n=(V(C_n),E(C_n),w)$ be  a vertex-weighted oriented cycle,  $I(C_n)=(L_1,\ldots,L_n)$   an edge ideal of $C_n$,  where $L_i=x_{i-1}x_i^{w_i}$ and $w_i\geq 2$ for $1\leq i\leq n$. Then
$$\mbox{reg}\,(I(C_n)^{t})=\sum\limits_{x\in V(C_n)}w(x)-|E(C_n)|+1+(t-1)(w+1)\ \ \  \mbox{for any}\ \ t\geq 1,$$
where $w=\mbox{max}\,\{w_i\mid 1\leq i\leq n\}$.
\end{Theorem}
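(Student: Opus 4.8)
The plan is to prove the formula by induction on $t$, using the Betti splitting machinery developed in Section 2 together with the ordering and the technical results of Section 3. The base case $t=1$ is exactly \cite[Theorem]{Z4} (the regularity of the edge ideal of a vertex-weighted oriented cycle), which gives $\mbox{reg}\,(I(C_n))=\sum_{i=1}^n w_i-n+1$. For the inductive step, fix $t\geq 2$ and write $L^{(t)}=\{L_1^{(t)},\ldots,L_r^{(t)}\}$ ordered as in Section 3. The key idea is to peel off generators one at a time: set $I=I(C_n)^t=(L_1^{(t)},\ldots,L_r^{(t)})$ and consider the chain of ideals $I=(L_1^{(t)},\ldots,L_r^{(t)})\supset (L_2^{(t)},\ldots,L_r^{(t)})\supset\cdots$, i.e. at step $i$ split $I_i:=(L_i^{(t)},\ldots,L_r^{(t)})$ as $(L_i^{(t)})+J_i$ where $J_i=(L_{i+1}^{(t)},\ldots,L_r^{(t)})$. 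Since a principal monomial ideal has a linear resolution, Lemma~\ref{lem1} shows each such split is a Betti splitting, so Corollary~\ref{cor1} gives
\[
\mbox{reg}\,(I_i)=\mbox{max}\,\{\mbox{reg}\,((L_i^{(t)})),\ \mbox{reg}\,(J_i),\ \mbox{reg}\,((L_i^{(t)})\cap J_i)-1\}.
\]
Now $(L_i^{(t)})\cap J_i=L_i^{(t)}\cdot(J_i:L_i^{(t)})$, and since $\supp\,(L_i^{(t)})$ meets $\supp\,(J_i:L_i^{(t)})$ only in a controlled way, Lemma~\ref{lem4} reduces $\mbox{reg}\,((L_i^{(t)})\cap J_i)$ to $\mbox{reg}\,((J_i:L_i^{(t)}))$ plus $\deg L_i^{(t)}$ on the disjoint part — more precisely one has to track which variables of $L_i^{(t)}$ appear in the colon ideal and split off the rest.

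The heart of the argument is then to feed in the colon-ideal computations. By Theorem~\ref{thm3}, $(J_i:L_i^{(t)})$ equals $K_i$ (if $i_1\geq 2$) or $K_i+Q_i$ (if $i_1=1$), and Propositions~\ref{prop1} and~\ref{prop2} bound $\mbox{reg}\,((J_i:L_i^{(t)}))$ from above by $\sum_{j=i_1+1}^n w_j-(n-i_1)+1$, which is maximized when $i_1=1$, giving the uniform bound $\sum_{j=2}^n w_j-n+1$. Combining this with the degree contribution of $L_i^{(t)}$: since $L_i^{(t)}=L_{i_1}^{a_{i_1}}\cdots L_{i_{k_i}}^{a_{i_{k_i}}}$ with $\sum a_{i_j}=t$ and $\deg L_{i_j}=1+w_{i_j}$, the degree of $L_i^{(t)}$ is $\sum_{j}a_{i_j}(1+w_{i_j})\leq t(1+w)$; but one must be careful because only the variables of $L_i^{(t)}$ disjoint from the support of the colon contribute to the shift, and $x_{i_1}$ (together with part of the $L_1$-block) does appear. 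Carrying this out yields $\mbox{reg}\,((L_i^{(t)})\cap J_i)-1\leq (\sum_{j=2}^n w_j-n+1)+(t-1)(w+1)$, with equality attained for a suitable choice of $i$ (namely taking $L_i^{(t)}=L_1^{a_1}\cdots$ with one factor equal to a maximal-weight edge repeated $t-1$ times and $L_1$ once, or similar), so that the maximum over the whole chain is exactly $\sum_{j=1}^n w_j-n+1+(t-1)(w+1)$. Finally $\mbox{reg}\,(J_i)$ is handled inductively down the chain, with the very last term $\mbox{reg}\,((L_r^{(t)}))=\deg L_r^{(t)}=t(1+w_n)$ (or whatever the minimal generator is) contributing no more than the claimed bound.

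Putting the pieces together: an easy downward induction on $i$ (from $i=r$ up to $i=1$) using the three-term max in Corollary~\ref{cor1} shows $\mbox{reg}\,(I(C_n)^t)=\mbox{reg}\,(I_1)$ is the maximum of the $t(1+w_n)$-type base term and the colon-minus-one terms, all of which are $\leq \sum_{j=1}^n w_j-n+1+(t-1)(w+1)$; the reverse inequality comes from exhibiting one index $i$ where the colon term achieves the bound, using the base case $t=1$ result of \cite{Z4} applied to the shape of $L_i^{(t)}\cdot(J_i:L_i^{(t)})$ after polarization (Lemmas~\ref{lem5},~\ref{lem9}). I expect the main obstacle to be the careful bookkeeping in the disjoint-support reduction of Lemma~\ref{lem4}: one has to verify precisely which variables of the big monomial $L_i^{(t)}$ survive in $L_i^{(t)}\cdot(J_i:L_i^{(t)})$ versus get absorbed, and that this gives exactly a shift of $(t-1)(w+1)$ and not something larger or smaller — in particular that the extremal index $i$ is chosen so that the maximal weight $w$ is used exactly $t-1$ times and the leftover edge $L_1$ is used once, matching both the upper bound from Propositions~\ref{prop1}--\ref{prop2} and the lower bound from the base case.
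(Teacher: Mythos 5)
Your outline reproduces the paper's chain-of-colons strategy for the upper bound, but it contains a genuine gap at the point on which the equality actually rests. You claim that each peel-off $I_i=(L_i^{(t)})+J_i$ is a Betti splitting ``since a principal monomial ideal has a linear resolution,'' citing Lemma~\ref{lem1}. That lemma requires, in addition to the linear resolution, that the summand in question contain \emph{all} generators of $I_i$ divisible by some fixed variable. In the unpolarized ring this hypothesis is never satisfied by a single generator $(L_i^{(t)})$: every variable occurring in $L_i^{(t)}$ also divides many other generators of $I(C_n)^t$. The paper invokes a Betti splitting exactly once, for $(I(C_n)^t)^{\mathcal{P}}=J^{\mathcal{P}}+K^{\mathcal{P}}$ with $K=(L_1^{(t)})=(x_n^tx_1^{tw_1})$, and only \emph{after polarization}, because the polarized variable $x_{1,tw_1}$ divides no generator of $J^{\mathcal{P}}$ (since $w_1\geq 2$ forces every other generator to have strictly smaller $x_1$-degree). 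For the intermediate generators of your chain no such variable exists in general, so your ``exact max at every step'' formula is unsupported; and since the lower bound comes precisely from the intersection term of a genuine Betti splitting, this is not a cosmetic defect.

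The upper-bound half of your argument can be salvaged without any Betti splitting, and this is what the paper does: it runs the same chain $J_{i-1}=J_i+(L_i^{(t)})$ but uses only the one-sided estimate of Lemma~\ref{lem6}(1) on the exact sequences $0\to S/(J_i:L_i^{(t)})(-d_i)\to S/J_i\to S/J_{i-1}\to 0$, fed by Theorem~\ref{thm3} and Propositions~\ref{prop1}--\ref{prop2}. For the equality, however, your sketch is vague exactly where the work lies: the extremal term is $\mbox{reg}(J^{\mathcal{P}}\cap K^{\mathcal{P}})-1$ with $K=(L_1^t)$ and $w=w_1$ arranged without loss of generality (not ``a maximal-weight edge repeated $t-1$ times and $L_1$ once''); one must compute $J^{\mathcal{P}}\cap K^{\mathcal{P}}=K^{\mathcal{P}}L$ explicitly, apply Lemma~\ref{lem9} to $L$ (each generator of $L$ having a private variable) and Lemma~\ref{lem4}(2), whose disjoint-support hypothesis is provided by polarization --- your appeal to Lemma~\ref{lem4} with overlapping supports is outside its hypotheses --- and also control $\mbox{reg}(J^{\mathcal{P}})$, which the paper does via the induced-subhypergraph Lemma~\ref{lem8} combined with the Step-1 upper bound rather than by induction down the chain. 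Without the polarization step and the verification of the variable condition in Lemma~\ref{lem1}, the reverse inequality does not follow from your outline.
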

\begin{proof}
Case $t=1$ follows from \cite[Theorem 4.1]{Z4}.
Now assume $t\geq2$.  Let $w=w_1$ without loss of generality and
$L^{(t)}=\{L_1^{(t)}, \ldots, L_r^{(t)}\}$  a totally ordered set of all elements of
  $\mathcal{G}(I(C_n)^{t})$ such that $L_1^{(t)}> \cdots >L_r^{(t)}$.
 For  $1\leq i\leq r$, we write $L_i^{(t)}$ as
$L_i^{(t)}=L_{i_1}^{a_{i_1}}\cdots L_{i_{k_i}}^{a_{i_{k_i}}}$ with $1\leq i_1<\cdots < i_{k_i}\leq n$,
$\sum\limits_{j=1}^{{k_i}}a_{i_j}=t$ and $a_{i_j}>0$ for $j=1,\ldots,k_i$. Let $d_i$ be the degree of monomial $L_{i}^{(t)}$, then  we get
$d_i\leq(w_{i_1}+1)+(t-1)(w+1)$ for $1\leq i\leq r-1$ by the definition of $w$. We prove this argument in the
following two steps.

Step $1$:  We first show $\mbox{reg}\,(I(C_n)^{t})\leq\sum\limits_{j=1}^{n}w_j-n+1+(t-1)(w+1)$. Let  $J_{i}=(L_{i+1}^{(t)},\ldots, L_{r}^{(t)})$
for $1\leq i\leq r-1$. Since $J_{r-1}=(L_{r}^{(t)})=(x_{n-1}^tx_n^{tw_n})$, we get
\begin{eqnarray*}
\mbox{reg}\,(J_{r-1})&=&t(w_n+1)=(\sum\limits_{j=1}^{n}w_j-n+1+(t-1)(w_n+1))+(n-\sum\limits_{j=1}^{n-1}w_j)\\
&\leq&\sum\limits_{j=1}^{n}w_j-n+1+(t-1)(w+1), \hspace{5.5cm}(1)
\end{eqnarray*}
where the inequality above holds because of $w_n\leq w$ and $w_j\geq2$ for  $1\leq j\leq n-1$.

By Proposition \ref{prop1} and Proposition \ref{prop2}, we have, for any $1\leq i\leq r-1$,
\begin{eqnarray*}
\mbox{reg}\,((J_i:L_{i}^{(t)}))&\leq & \left\{\begin{array}{ll}
\sum\limits_{j=2}^{n}w_j-n+1\ \ &\text{if}\ \ i_1=1,\\
 \sum\limits_{j=i_1+1}^{n}w_j-(n-i_1)+1\ \ &\text{if} \ \ i_1\geq 2,\\
\end{array}\right.\\
\end{eqnarray*}
\begin{eqnarray*}
&\leq&\left\{\begin{array}{ll}
\sum\limits_{j=1}^{n}w_j-n+1-w_1\ \ &\text{if}\ \ i_1=1,\\
 \sum\limits_{j=1}^{n}w_j-n+1-w_{i_1}\ \ &\text{if} \ \ i_1\geq 2,\\
\end{array}\right.
\end{eqnarray*}
where the last inequality holds because of $w_j\geq2$ for  $1\leq j\leq n$.
It follows that
\begin{eqnarray*}
\mbox{reg}\,((J_i:L_{i}^{(t)})(-d_i))\!\!\!&=&\!\!\!\mbox{reg}\,((J_i:L_{i}^{(t)}))+d_i\\
\!\!\!&\leq&\!\!\! (\sum\limits_{j=1}^{n}w_j-n+1-w_{i_1})+((w_{i_1}+1)+(t-1)(w+1))\\
\!\!\!&=&\!\!\!\sum\limits_{j=1}^{n}w_j-n+1+(t-1)(w+1)+1.\hspace{3.7cm}(2)
\end{eqnarray*}

Using the formulas (1) and (2), Lemma \ref{lem2} and Lemma \ref{lem6} (1)  on the following short exact sequences
\begin{gather*}
\hspace{1.0cm}\begin{matrix}
 0 & \longrightarrow & \frac{S}{(J_1:L_{1}^{(t)})}(-d_1) & \stackrel{ \cdot L_{1}^{(t)}} \longrightarrow & \frac{S}{J_1} &\longrightarrow & \frac{S}{I(C_n)^{t}} & \longrightarrow & 0  \\
 0 & \longrightarrow & \frac{S}{(J_2:L_{2}^{(t)})}(-d_2)  & \stackrel{ \cdot L_{2}^{(t)}} \longrightarrow  & \frac{S}{J_2} & \longrightarrow & \frac{S}{J_1} & \longrightarrow & 0 & \\
 \\
     &  &\vdots&  &\vdots&  &\vdots&  &\\
 0&  \longrightarrow & \frac{S}{(J_{r-1}:L_{r-1}^{(t)})}(-d_r) & \stackrel{ \cdot L_{r-1}^{(t)}} \longrightarrow & \frac{S}{J_{r-1}}& \longrightarrow & \frac{S}{J_{r-2}}& \longrightarrow & 0,
 \end{matrix}
\end{gather*}
we obtain
$$\mbox{reg}\,(I(C_n)^{t})\leq\sum\limits_{j=1}^{n}w_j-n+1+(t-1)(w+1).\eqno(3)$$

Step $2$:  We  show $\mbox{reg}\,(I(C_n)^{t})=\sum\limits_{j=1}^{n}w_j-n+1+(t-1)(w+1)$.

We write $I(C_n)^t$ as  $I(C_n)^t=J+K$ with $\mathcal{G}(I(C_n)^t)=\mathcal{G}(J)\bigsqcup \mathcal{G}(K)$ and  $K=(L_1^{(t)})$. Let
$J^{\mathcal{P}}$, $K^{\mathcal{P}}$ and $(I(C_n)^t)^{\mathcal{P}}$ be the polarization of $J$, $K$ and $(I(C_n)^t)$ respectively, then $K=(x_n^tx_1^{tw_1})$ and
\[(I(C_n)^t)^{\mathcal{P}}=J^{\mathcal{P}}+K^{\mathcal{P}}\ \ \ \  \text{and}\ \ \ \  J^{\mathcal{P}}\cap K^{\mathcal{P}}=K^{\mathcal{P}}L,
\]
where $L=(\prod\limits_{j=1}^{w_{2}}x_{2j},x_{21}\!\prod\limits_{j=1}^{w_{3}}\!x_{3j},\ldots, x_{n-1,\,1}\!\!\prod\limits_{j=t+1}^{t-1+w_{n}}\!\!x_{n,\,j})$.
Then $|\supp\,(L)|=\sum\limits_{j=2}^{n}w_j-1$ and $|\mathcal {G}(L)|=n-1$. We distinguish into the following two steps:

Step (i): We first compute $\mbox{reg}\,(J^{\mathcal{P}}\cap K^{\mathcal{P}})$.

Since $x_{2,w_2}$ (resp.  $x_{n,t-1+w_n}$)  is only a factor of the unique monomial $\prod\limits_{j=1}^{w_{2}}x_{2j}$ (resp. $x_{n-1,\,1}\!\!\prod\limits_{j=t+1}^{t-1+w_{n}}\!\!x_{n,\,j}$)  of  the set $\mathcal {G}(L)$ and $x_{j,w_j}$ is also only a factor of the unique monomial
$x_{j-1,1}\prod\limits_{k=1}^{w_{j}}x_{j,k}$ of  the set $\mathcal {G}(L)$ for any $3\leq j\leq n-1$ and  the variables that appear in  $K^{\mathcal{P}}$ and  $L$ are different, thus  by Lemma \ref{lem4} (2) and Lemma \ref{lem9}, we obtain
\begin{eqnarray*}
\hspace{2cm}\mbox{reg}\,( J^{\mathcal{P}}\cap K^{\mathcal{P}})\!\!\!&=&\!\!\!\mbox{reg}\,(K^{\mathcal{P}}L)
=\mbox{reg}\,(K^{\mathcal{P}})+\mbox{reg}\,(L)\\
\!\!\!&=&\!\!\!t(w+1)+(|\supp\,(L)|-|\mathcal {G}(L)|+1)\\
\!\!\!&=&\!\!\!t(w+1)+(\sum\limits_{j=2}^{n}w_j-1-(n-1)+1)\\
&=&\sum\limits_{j=1}^{n}w_j-n+1+(t-1)(w+1)+1. \hspace{2.5cm}(4)
\end{eqnarray*}

Step (ii): We  compute $\mbox{reg}\,(J^{\mathcal{P}})$.

Let $H=(V(H), \mathcal{E}(H))$ and $H'=(V(H'), \mathcal {E}(H'))$ are hypergraphs associated to $\mathcal{G}((I(C_n)^t)^{\mathcal{P}})$ and $\mathcal{G}(J^{\mathcal{P}})$ respectively, then  $H'$ is an induced subhypergraph of $H$.
In fact, $H'$ is a  subhypergraph of $H$ by the choice  of $\mathcal{G}((I(C_n)^t)^{\mathcal{P}})$ and $\mathcal{G}(J^{\mathcal{P}})$.
On the other hand, if $E\in \mathcal{E}(H)$ with  $E\subseteq V(H')$, then  monomial $\prod\limits_{x_{ij}\in E}x_{ij}$ associated to $E$ belong to $\mathcal{G}((I(C_n)^t)^{\mathcal{P}})$.
Since $\mathcal{G}((I(C_n)^t)^{\mathcal{P}})=\mathcal{G}(K^{\mathcal{P}})\bigsqcup \mathcal{G}(J^{\mathcal{P}})$,
if $\prod\limits_{x_{ij}\in E}x_{ij}\in  \mathcal{G}(K^{\mathcal{P}})$, then $x_{1,tw_1}\in E$ by definition of $\mathcal{G}(K^{\mathcal{P}})$,
contradicting with  $x_{1,tw_1}\notin V(H')$. Thus $\prod\limits_{x_{ij}\in E}x_{ij}\in \mathcal{G}(J^{\mathcal{P}})$.
Hence $H'$ is an induced subhypergraph of $H$. By  Lemma \ref{lem5} (2), Lemma \ref{lem8} and the formula (3), we get
$$\mbox{reg }\,(J^{\mathcal{P}})\leq\mbox{reg }\,((I(C_n)^t)^{\mathcal{P}})
=\mbox{reg }\,((I(C_n)^t))\leq \sum\limits_{j=1}^{n}w_j-n+1+(t-1)(w+1).\eqno(5)$$
Let $\alpha=\mbox{reg}\,( J^{\mathcal{P}}\cap K^{\mathcal{P}})-1$ and $\beta=\mbox{reg}\,(K^{\mathcal{P}})=t(w+1)$, then
$$\alpha-\beta=(\sum\limits_{j=1}^{n}w_j-n+1+(t-1)(w+1))-t(w+1)
= \sum\limits_{j=2}^{n}w_j-n \geq 0,\eqno(6)$$
where the inequality holds because of $w_j\geq2$ for  $2\leq j\leq n$.

Since the variable $x_{1,tw_1}$ in $\supp\,(K^{\mathcal {P}})$  can not divided generators of $J^{\mathcal {P}}$ and   $K^{\mathcal {P}}$ has  a linear resolution. By Lemma \ref{lem1},  it follows that
$(I(C_n)^t)^{\mathcal{P}}=J^{\mathcal{P}}+K^{\mathcal{P}}$ is Betti splitting.  By Corollary \ref{cor1}, formulas (4), (5) and  (6), we obtain
\begin{eqnarray*}
\mbox{reg}\,((I(C_n)^t))\!\!\!&=&\!\!\!\mbox{reg}\,((I(C_n)^t)^{\mathcal{P}})=\mbox{max}\, \{\mbox{reg}\,(J^{\mathcal{P}}),\mbox{reg}\,(K^{\mathcal{P}}), \mbox{reg}\,(J^{\mathcal{P}}\cap K^{\mathcal{P}})-1\}\\
\!\!\!&=&\!\!\!\mbox{max}\, \{\mbox{reg}\,(J^{\mathcal{P}}),t(w+1),(\sum\limits_{j=1}^{n}\!w_j-n+1+(t-1)(w+1)+1)-1\}\\
\!\!\!&=&\!\!\!\sum\limits_{j=1}^{n}w_j-n+1+(t-1)(w+1).
\end{eqnarray*}
This proof is completed.
\end{proof}

\medskip
 As a consequence of Theorem \ref{thm4}, we have
\begin{Corollary}\label{cor2}
Let $C_n=(V(C_n),E(C_n),w)$ be a vertex-weighted oriented cycle as in Theorem \ref{thm4}. Then
$$\mbox{reg}\,(I(C_n)^{t})=\mbox{reg}\,(I(C_n))+(t-1)(w+1)\hspace{1cm}  \mbox{for any}\ \ t\geq 1,$$
where $w=\mbox{max}\,\{w(x)\mid x \in V(C_n)\}$.
\end{Corollary}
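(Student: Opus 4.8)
The plan is to read both sides of the claimed identity off Theorem \ref{thm4} directly; no new ideas are needed, since the corollary merely repackages the combinatorial constant appearing there. First I would apply Theorem \ref{thm4} with exponent $t=1$: the term $(t-1)(w+1)$ vanishes, so
$$\mbox{reg}\,(I(C_n)) = \sum_{x\in V(C_n)} w(x) - |E(C_n)| + 1,$$
which is exactly the base case quoted in the proof of Theorem \ref{thm4}, namely \cite[Theorem 4.1]{Z4}. Next I would apply Theorem \ref{thm4} with a general exponent $t\geq 1$, obtaining
$$\mbox{reg}\,(I(C_n)^{t}) = \sum_{x\in V(C_n)} w(x) - |E(C_n)| + 1 + (t-1)(w+1).$$

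Substituting the first displayed formula into the second yields
$$\mbox{reg}\,(I(C_n)^{t}) = \mbox{reg}\,(I(C_n)) + (t-1)(w+1)$$
for every $t\geq 1$ (the case $t=1$ reducing to a trivial identity). There is essentially no obstacle here: the corollary is a formal consequence of Theorem \ref{thm4}, and the only point to verify is that the standing hypothesis $w_i\geq 2$ for all $i$ is shared by both statements, so that both invocations of Theorem \ref{thm4} are legitimate; this is immediate since $C_n$ is taken "as in Theorem \ref{thm4}". Hence the main (and only) work was already done in establishing Theorem \ref{thm4}, and what remains is the one-line algebraic substitution above.
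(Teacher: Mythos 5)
Your argument is correct and is exactly how the paper treats this statement: the corollary is stated as an immediate consequence of Theorem \ref{thm4}, obtained by reading off the $t=1$ case (which recovers $\mbox{reg}\,(I(C_n))=\sum_{x\in V(C_n)}w(x)-|E(C_n)|+1$) and substituting it into the general formula. Nothing further is needed.
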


\medskip
The following example shows  the assumption  that  $w(x)\geq 2$ for any  $x\in V(C_n)$  in Theorem \ref{thm4}  cannot be dropped.
\begin{Example}  \label{example6}
Let $I(C_5)=(x_5x_1,x_1x_2^3,x_2x_3^3,x_3x_4,x_4x_5^3)$ be an edge ideal of the  vertex-weighted  oriented cycle $C_5=(V,E,w)$, its weight function is  $w_2=w_3=w_5=3$,  $w_1=w_4=1$. Thus  $w=3$. By using CoCoA, we obtain $\mbox{reg}\,(I(C_5)^2)=10$. But we have $\mbox{reg}\,(I(C_5)^2)=\sum\limits_{i=1}^{5}w_i-|E(C_5)|+1+w+1=11$  by Theorem \ref{thm4}.
\end{Example}

\medskip
The following example shows that the
regularity  of powers of  edge ideals of vertex-weighted oriented  cycles as Theorem \ref{thm4} is related to direction selection.
\begin{Example}  \label{example7}
Let $I(C_5)=(x_1x_5^3,x_1x_2^3,x_2x_3^3,x_3x_4^3,x_4x_5^3)$ be an  edge ideal of the vertex-weighted  oriented cycle $C_5=(V,E,w)$ with $w_2=w_3=w_4=w_5=3$,  $w_1=1$. Thus $w=3$. By using CoCoA, we obtain $\mbox{reg}\,(I(C_5)^2)=14$. But we have $\mbox{reg}\,(I(C_5)^2)=\sum\limits_{i=1}^{5}w_i-|E(C_5)|+1+w+1=13$  by Theorem \ref{thm4}.
 \end{Example}

\medskip

\section{Regularity of  powers of edge ideals of vertex-weighted unicyclic graphs}

 In this section, we  consider a vertex-weighted  oriented
unicyclic graph $D=(V(D),E(D),w)$ satisfying its underlying graph $G$ is the union of a circle and some forests. We will provide the exact formulas for the regularity of  powers of its edge ideal. We also give some examples to show the  regularity of powers of  edge ideals of vertex-weighted oriented unicyclic graphs is related to direction selection and  the assumption  that
$w(x)\geq 2$  if $d(x)\neq 1$ cannot be dropped.

 \begin{Definition}\label{def2}
 Let $G_i=(V_i,E_i)$ be some simple graphs for $1\leq i\leq s$,  their union is a graph $G=(V,E)$,  denoted by $\bigcup\limits_{i=1}^{s}G_{i}$, satisfying  its vertex set is $V=\bigcup\limits_{i=1}^{s}V_{i}$ and its edge set is $E=\bigcup\limits_{i=1}^{s}E_{i}$.
\end{Definition}

\begin{Definition}\label{def3}
 Let $G=(V(G),E(G))$ be a unicyclic graph with $n$ vertices. We write $G$ as $G=G_0\cup(\bigcup\limits_{j=1}^{s}G_j)$, where $G_0$ is an $m$-cycle and
 $G_j$ is a tree for $1\leq j\leq s$. The digraph $D=(V(D),E(D),w)$ is called  an oriented  unicyclic graph, denoted by $D=C_m\cup(\bigcup\limits_{j=1}^{s}T_j)$, if its underlying graph is $G$,  and $C_m$ is  an oriented cycle with underlying graph $G_0$ and $T_j$ is an oriented tree with  underlying graph $G_j$, its  orientation  is as follows:  if $V(G_0)\cap V(G_j)=\{x_{i_j}\}$, then $x_{i_j}$ is the root of $T_j$, and  all edges in $T_j$ are oriented away from $x_{i_j}$ for $1\leq j\leq s$.
\end{Definition}

Throughout  this section, let $D=(V(D),E(D),w)$ be a vertex-weighted oriented unicyclic graph with vertex set
 $V(D)=\{x_1,\ldots,x_n\}$, where  $C_m$ is the unique oriented cycle in $D$, its vertex set
 $V(C_m)=\{x_1,\ldots,x_m\}$, its edge set $E(C_m)=\{x_1x_2^{w_2},\ldots,x_mx_1^{w_1}\}$.
 The   orientation of $D$ defined  as above and the
weight $w(x_i)\geq 2$ of $x_i$  if $d(x_i)\neq 1$ for $1\leq i\leq n$.

\medskip
 Let  $D=(V(D), E(D),w)$ be a vertex-weighted oriented graph. For $T\subset V(D)$, we define
the {\em induced vertex-weighted  subgraph} $H=(V(H), E(H),w)$ of $D$  to be a vertex-weighted oriented graph
with $V(H)=T$,  for any $u,v\in V(H)$, $uv\in E(H)$  if and only if $uv\in E(D)$ and its orientation in $H$  is the same as  in $D$. For any $u\in V(H)$ and $u$ is not a source in $H$, its weight in $H$ equals to the weight of $u$ in $D$, otherwise, its weight in $H$ equals to $1$.
For  $P\subset V(D)$, we  denote
$D\setminus P$ the induced subgraph of $D$ obtained by removing the vertices in $P$ and the
edges incident to these vertices. If $P=\{x\}$ consists of a  element, then we write $D\setminus x$ for $D\setminus \{x\}$.
If $x\in V(D)$, then we denote by $N_D^{+}(x)=\{y:(x,y)\in E(D)\}$, $N_D^{-}(x)=\{y:(y,x)\in E(D)\}$ and $N_D(x)=N_D^{+}(x)\cup N_D^{-}(x)$.

\medskip
We need the following two lemmas, see for instance \cite[Lemma 3.4, Lemma 3.5, Lemma 3.6 and Theorem 4.2]{Z5}.
\begin{Lemma}\label{lem10}
 Let  $t\geq 2$ be a positive integer and $D=(V(D),E(D),w)$  a vertex-weighted oriented graph, let $z$ be a leaf with $N_D^{-}(z)=\{y\}$. Then,
\begin{itemize}
\item[(1)] $(I(D)^{t},z^{w_z})=(I(D\setminus z)^{t},z^{w_z})$,
\item[(2)] $(I(D)^{t}:yz^{w_z})=I(D)^{t-1}$,
\item[(3)] $((I(D)^{t}:z^{w_z}),y)=((I(D\setminus y)^{t}:z^{w_z}),y)=(I(D\setminus y)^{t},y)$.
\end{itemize}
\end{Lemma}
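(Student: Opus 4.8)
My plan rests on a single structural observation, used in all three parts: since $z$ is a leaf whose only incident edge is $yz$ (oriented $y\to z$), the variable $z$ divides exactly one element of $\mathcal{G}(I(D))$, namely $yz^{w_z}$. Hence $I(D)=I(D\setminus z)+(yz^{w_z})$ with $\mathcal{G}(I(D))=\mathcal{G}(I(D\setminus z))\sqcup\{yz^{w_z}\}$, and also $I(D)=I(D\setminus y)+(\text{the edge monomials of the edges meeting }y)$, where every generator in the second summand is divisible by $y$. Because all ideals occurring in the statement are monomial, I would prove each equality by testing it on an arbitrary monomial $v$, comparing the $z$- and $y$-exponents of $v$ (and of the relevant multiple of $v$) with those of a product $G_1\cdots G_t$, $G_i\in\mathcal{G}(I(D))$, dividing it; cancelling monomial factors coordinatewise is the only manipulation needed.

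For (1): the inclusion $(I(D\setminus z)^t,z^{w_z})\subseteq(I(D)^t,z^{w_z})$ is immediate from $I(D\setminus z)\subseteq I(D)$, and for the reverse I note that $I(D)^t$ is generated by the products $G_1\cdots G_t$ with $G_i\in\mathcal{G}(I(D))$; such a product either has some $G_i=yz^{w_z}$, hence is divisible by $z^{w_z}$, or has all $G_i\in\mathcal{G}(I(D\setminus z))$, hence lies in $I(D\setminus z)^t$; either way it belongs to $(I(D\setminus z)^t,z^{w_z})$. For (2): $I(D)^{t-1}\subseteq(I(D)^t:yz^{w_z})$ holds since $yz^{w_z}\in I(D)$; for the reverse, take a monomial $v$ with $v\,yz^{w_z}\in I(D)^t$ and fix $G_1,\dots,G_t\in\mathcal{G}(I(D))$ with $G_1\cdots G_t\mid v\,yz^{w_z}$. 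If some $G_i=yz^{w_z}$, cancelling this common factor shows the product of the other $t-1$ generators divides $v$, so $v\in I(D)^{t-1}$. Otherwise no $G_i$ is divisible by $z$ (the only generator of $I(D)$ containing $z$ is $yz^{w_z}$), so $G_1\cdots G_t\mid vy$; then either $y\nmid G_1\cdots G_t$, so $G_1\cdots G_t\mid v$ and $v\in I(D)^t\subseteq I(D)^{t-1}$, or, say, $y\mid G_1$, so $(G_1\cdots G_t)/y\mid v$, whence the product $G_2\cdots G_t$ of $t-1$ generators divides $v$, again giving $v\in I(D)^{t-1}$.

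For (3): the second equality $((I(D\setminus y)^t:z^{w_z}),y)=(I(D\setminus y)^t,y)$ holds because $z$ is isolated in $D\setminus y$, so $z\notin\supp(I(D\setminus y))$ and colon by $z^{w_z}$ changes nothing. For the first equality, $\supseteq$ follows from $I(D\setminus y)^t\subseteq I(D)^t$. For $\subseteq$, take a monomial $v\in(I(D)^t:z^{w_z})$; if $y\mid v$ then $v\in(y)$ and we are done, so assume $y\nmid v$. Then $v\,z^{w_z}$ has $y$-exponent $0$, so in any product $G_1\cdots G_t$ ($G_i\in\mathcal{G}(I(D))$) dividing $v\,z^{w_z}$ every $G_i$ has $y$-exponent $0$; since every generator of $I(D)$ outside $\mathcal{G}(I(D\setminus y))$ is divisible by $y$, all $G_i$ lie in $\mathcal{G}(I(D\setminus y))$, whence $G_1\cdots G_t\in I(D\setminus y)^t$; and since also $z\nmid G_1\cdots G_t$, this product divides $v$, so $v\in I(D\setminus y)^t$.

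The step I expect to require the most care is the case in (2) where the product $G_1\cdots G_t$ dividing $v\,yz^{w_z}$ does not use the edge $yz$: there is then no obvious common factor to cancel, and one must instead notice that $y$ divides $G_1\cdots G_t$ and peel off one edge. This works uniformly whether $y$ occurs in that edge as a source (with exponent $1$) or as a sink (with exponent $w_y$), because in both cases $(G_1\cdots G_t)/y$ is still a monomial and is still divisible by the remaining $t-1$ edge monomials. Everything else is routine exponent bookkeeping, and, notably, no hypothesis on the weights (such as $w(x)\ge 2$) is needed for this lemma.
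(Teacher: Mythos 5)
Your proof is correct, and every step checks out: the key fact that $yz^{w_z}$ is the unique generator of $I(D)$ involving $z$, the coordinatewise cancellation in part (2) (including the delicate subcase where the product dividing $v\,yz^{w_z}$ avoids the edge $yz$ and you peel one $y$ off a generator through $y$), and the reduction of part (3) to the observation that $z$ is isolated in $D\setminus y$ while every generator of $I(D)$ outside $\mathcal{G}(I(D\setminus y))$ is divisible by $y$. Note, however, that this paper does not actually prove Lemma 5.6: it cites the companion paper [Z5, Lemmas 3.4--3.6] for it, so there is no in-paper argument to compare yours with; your self-contained divisibility argument is a legitimate replacement and is the standard way such colon/sum identities for edge ideals are verified. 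One point you use silently and could state explicitly is that $\mathcal{G}(I(D\setminus z))\subseteq\mathcal{G}(I(D))$ and $\mathcal{G}(I(D\setminus y))\subseteq\mathcal{G}(I(D))$; this is where the paper's convention on induced weighted subgraphs matters (a vertex that becomes a source in the subgraph is reassigned weight $1$, but since only the weight of the ending point of an edge enters the edge monomial, and ending points of surviving edges are not sources, the surviving generators are literally unchanged). With that remark added, the proof is complete, and your closing observation that no hypothesis $w(x)\geq 2$ is needed here is also correct --- the weight assumptions in the paper enter only in the regularity formulas, not in this lemma.
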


\medskip
\begin{Lemma}\label{lem11}Let   $D=(V(D),E(D),w)$ be  a vertex-weighted rooted forest such that   $w(x)\geq 2$  if $d(x)\neq 1$. Let $w=\mbox{max}\,\{w(x)\mid x \in V(D)\}$, then
\[
\mbox{reg}\,(I(D)^{t})=\sum\limits_{x\in V(D)}w(x)-|E(D)|+1+(t-1)(w+1).
\]
\end{Lemma}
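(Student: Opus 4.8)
The plan is to prove the identity by a double induction: an outer induction on the exponent $t$ and, for each fixed $t$, an inner induction on $|V(D)|$, the reductions being supplied by the leaf formulas of Lemma~\ref{lem10} and the regularity comparisons of Lemma~\ref{lem6}. For the base case $t=1$ I would polarize. Since $D$ is a rooted forest, every non-root vertex $x_j$ has a unique parent $x_{p(j)}$, so $\mathcal{G}(I(D))=\{x_{p(j)}x_j^{w_j}\}$ and the polarized generator is $x_{p(j),1}x_{j,1}\cdots x_{j,w_j}$. The hypothesis ``$w(x)\geq 2$ when $d(x)\neq 1$'' is exactly what forces the top variable $x_{j,w_j}$ to occur in no other generator: a parent contributes only the first copy $x_{j,1}$, and if $w_j=1$ then $x_j$ is a leaf and occurs in no outgoing edge at all. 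Hence each minimal generator of $I(D)^{\mathcal{P}}$ owns a variable dividing no other, so Lemma~\ref{lem9} applies; as $|\supp(I(D)^{\mathcal{P}})|=\sum_x w(x)$ and $|\mathcal{G}(I(D))|=|E(D)|$, Lemma~\ref{lem5} gives $\reg(I(D))=\sum_x w(x)-|E(D)|+1$, the $t=1$ case.

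For $t\geq 2$ I would fix a leaf $z$ of maximal depth with parent $y$ (so $N_D^-(z)=\{y\}$ and all children of $y$ are leaves) and study the short exact sequence
\[
0\to \big(S/(I(D)^t:z^{w_z})\big)(-w_z)\xrightarrow{\ \cdot z^{w_z}\ } S/I(D)^t\to S/(I(D)^t,z^{w_z})\to 0 .
\]
For the cokernel, Lemma~\ref{lem10}(1) gives $(I(D)^t,z^{w_z})=(I(D\setminus z)^t,z^{w_z})$, and since $z$ has left the support, Lemma~\ref{lem3} yields $\reg((I(D)^t,z^{w_z}))=\reg(I(D\setminus z)^t)+w_z-1$, which the inner induction evaluates. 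For the kernel I would split off $y$ through a second sequence, identifying $\big((I(D)^t:z^{w_z}):y\big)=(I(D)^t:yz^{w_z})=I(D)^{t-1}$ by Lemma~\ref{lem10}(2) and $\big((I(D)^t:z^{w_z}),y\big)=(I(D\setminus y)^t,y)$ by Lemma~\ref{lem10}(3); the first term is controlled by the outer induction and the second (after discarding the vertices that $D\setminus y$ leaves isolated) by the inner induction.

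Writing $R=\sum_x w(x)-|E(D)|+1+(t-1)(w+1)$, the decisive point is that when the maximum weight $w$ is still attained in $D\setminus z$ the inner induction gives $\reg(S/(I(D)^t,z^{w_z}))=R-1$ exactly, while the two kernel terms are bounded by $\reg(I(D)^{t-1})=R-(w+1)$ and, using $w_y\geq 2$, by $\reg(I(D\setminus y)^t)\leq R-w_z$; feeding these into Lemma~\ref{lem6}(2) bounds the kernel's regularity and Lemma~\ref{lem6}(4) then forces $\reg(I(D)^t)=R$. The main obstacle is the arithmetic bookkeeping under the deletions—tracking that removing $y$ both isolates its leaf-children (which must be dropped before applying the hypothesis) and resets its surviving children to sources of weight $1$, and that the inequality $\reg(I(D\setminus y)^t)\leq R-w_z$ is driven precisely by $w_y\geq 2$—together with the genuinely special configuration in which the strictly unique heaviest vertex is a leaf that one is forced to delete (for instance a single edge, or a path with a heavy terminal leaf). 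In that configuration $\reg(S/(I(D)^t,z^{w_z}))$ drops below $R-1$, the value $R$ is instead realized by the colon term, and I would close the argument with parts (3) and (5) of Lemma~\ref{lem6} (supplemented, in the boundary subcase where the two regularities differ by exactly one, by a direct computation of the relevant graded Betti number, and by the principal-ideal identity $\reg((yz^{w_z})^t)=t(w_z+1)$ for the single-edge base).
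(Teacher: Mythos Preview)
The paper does not prove Lemma~\ref{lem11} itself; the sentence immediately preceding Lemmas~\ref{lem10} and~\ref{lem11} refers the reader to \cite[Theorem~4.2]{Z5}, so there is no in-paper argument to compare against. Your double induction via leaf removal and the two short exact sequences is precisely the template the paper applies one section later for unicyclic graphs (Proposition~\ref{prop4} and Theorem~\ref{thm6}), and is presumably also the template of \cite{Z5}, so the approach is the expected one. Your $t=1$ base case via polarization and Lemma~\ref{lem9} is correct.

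That said, the place where your sketch is not yet a proof is the boundary subcase you yourself flag. It really occurs: for the path $x_0\to x_1\to x_2$ with $w_1=2$, $w_2=3$ and $t=2$ one finds $\reg\bigl(S/(I^2,z^{w_z})\bigr)=R-2$ while $\reg\bigl(S/(I^2:z^{w_z})(-w_z)\bigr)=R-1$, exactly the gap at which Lemma~\ref{lem6} leaves $\reg(S/I^2)\in\{R-2,R-1\}$ undetermined. Your proposed ``direct computation of the relevant graded Betti number'' is not an argument---it would have to be carried out uniformly in $t$ and in the weights, and you give no mechanism for it. The clean fix, which the paper itself uses in Theorems~\ref{thm4} and~\ref{thm5}, is to decouple the two inequalities: keep your short-exact-sequence chain for the upper bound $\reg(I(D)^t)\le R$ (where Lemma~\ref{lem6}(2) suffices and no boundary case arises), and obtain the lower bound separately by polarizing and applying the Betti splitting of Lemma~\ref{lem1} with $K=((yz^{w_z})^t)$ for an edge $yz$ realizing $w_z=w$, then reading off $\reg(J^{\mathcal P}\cap K^{\mathcal P})-1=R$ via Lemmas~\ref{lem4} and~\ref{lem9}. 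This eliminates the boundary subcase altogether.
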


\medskip
We need the following propositions to prove the main results.
\begin{Proposition}\label{prop4}
Let  $t$ be a positive integer and $D=(V(D),E(D),w)$  a vertex-weighted oriented unicyclic graph,
where $D=C_m\cup T_1$ and $T_1$ is an oriented line graph, its  orientation is  as follows: $x_i$ is the root of $T_1$ if $V(T_1)\cap V(C_m)=\{x_{i}\}$
for some $1\leq i\leq m$, otherwise, $x_{m+1}$ is the root of $T_1$.
Then
$$\mbox{reg}\,(I(D)^t)\leq \sum\limits_{x\in V(D)}w(x)-|E(D)|+1+(t-1)(w+1)\ \ \mbox{for any}\ \ t\geq 1$$
where $w=\mbox{max}\,\{w(x)|x\in V(D)\}$.
\end{Proposition}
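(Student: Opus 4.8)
The plan is to establish the inequality by induction, ordered so that for the graph $D$ under consideration we may use the statement for the smaller exponent $t-1$, and for every exponent we may use it for graphs with strictly fewer vertices, together with Theorem \ref{thm4} for oriented cycles and Lemma \ref{lem11} for oriented forests. If $T_1$ has no edge, then $D=C_m$; since every vertex of $C_m$ has degree $\geq 2$ in $D$ and hence weight $\geq 2$, Theorem \ref{thm4} applies and gives the asserted value with equality. So assume $T_1$ has at least one edge and let $z$ be the endpoint of the line graph $T_1$ farthest from its root. Then $z$ is a leaf of $D$ with $N_D^-(z)=\{y\}$, where $y$ is the vertex preceding $z$ on $T_1$. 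Since $d_D(y)\geq 2$ we have $w_y\geq 2$, and $w_z\leq w$ by the definition of $w$. Write $R=\sum_{x\in V(D)}w(x)-|E(D)|+1+(t-1)(w+1)$ for the target value, and recall $\mbox{reg}\,(I(D)^t)=\mbox{reg}\,(S/I(D)^t)+1$ by Lemma \ref{lem2}.

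First I would apply Lemma \ref{lem6}(2) to the short exact sequence
$$0\longrightarrow\frac{S}{(I(D)^t:z^{w_z})}(-w_z)\stackrel{\cdot\,z^{w_z}}{\longrightarrow}\frac{S}{I(D)^t}\longrightarrow\frac{S}{(I(D)^t,z^{w_z})}\longrightarrow 0.$$
By Lemma \ref{lem10}(1) one has $(I(D)^t,z^{w_z})=(I(D\setminus z)^t,z^{w_z})$, and since $z$ does not occur in $I(D\setminus z)$, Lemma \ref{lem3} gives $\mbox{reg}\,(S/(I(D)^t,z^{w_z}))=\mbox{reg}\,(S/I(D\setminus z)^t)+w_z-1$. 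The graph $D\setminus z$ is again an oriented cycle with a line graph attached (or $C_m$ itself), with one vertex and one edge fewer than $D$ and the same weights, so by the inductive hypothesis (or Theorem \ref{thm4}) $\mbox{reg}\,(S/I(D\setminus z)^t)\leq R-w_z$; hence $\mbox{reg}\,(S/(I(D)^t,z^{w_z}))\leq R-1$. Because of the degree-$w_z$ twist in the sequence, everything reduces to the sharp estimate
$$\mbox{reg}\,\bigl(S/(I(D)^t:z^{w_z})\bigr)\leq R-1-w_z,\qquad(\ast)$$
for then Lemma \ref{lem6}(2) gives $\mbox{reg}\,(S/I(D)^t)\leq\max\{(R-1-w_z)+w_z,\ R-1\}=R-1$, i.e. $\mbox{reg}\,(I(D)^t)\leq R$.

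To prove $(\ast)$ I would peel off the variable $y$ in a second short exact sequence
$$0\longrightarrow\frac{S}{(I(D)^t:z^{w_z}y)}(-1)\stackrel{\cdot\,y}{\longrightarrow}\frac{S}{(I(D)^t:z^{w_z})}\longrightarrow\frac{S}{((I(D)^t:z^{w_z}),\,y)}\longrightarrow 0.$$
Lemma \ref{lem10}(2) identifies the submodule, $(I(D)^t:z^{w_z}y)=(I(D)^t:yz^{w_z})=I(D)^{t-1}$ (understood as $S$ when $t=1$), so the induction on $t$ yields $\mbox{reg}\,(S/I(D)^{t-1})+1\leq\bigl(R-(w+1)-1\bigr)+1=R-w-1\leq R-1-w_z$. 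Lemma \ref{lem10}(3) identifies the quotient, $((I(D)^t:z^{w_z}),y)=(I(D\setminus y)^t,y)$; here $I(D\setminus y)$ is the edge ideal of the connected oriented cycle-with-line-graph obtained by deleting $y$ (the vertex $z$ becoming an irrelevant isolated point), or of an oriented path when $y\in V(C_m)$, so the inductive hypothesis — respectively Lemma \ref{lem11} — applies to it. Keeping track of the deleted vertices and edges, and using $w_y\geq 2$, this gives $\mbox{reg}\,(S/(I(D\setminus y)^t,y))=\mbox{reg}\,(S/I(D\setminus y)^t)\leq R-1-w_z$. Plugging the two bounds into Lemma \ref{lem6}(2) proves $(\ast)$ and completes the induction.

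The main obstacle is exactly the sharp bound $(\ast)$: a naive estimate of the colon module $S/(I(D)^t:z^{w_z})$ is too lossy, because the first exact sequence re-inflates it by $w_z$; this forces the two-step argument, and the numerics close only because every non-leaf vertex — in particular the neighbour $y$ of the removed leaf — has weight at least $2$, which is precisely where the hypothesis ``$w(x)\geq 2$ whenever $d(x)\neq 1$'' enters. (If $T_1$ is disjoint from $C_m$, the statement either is excluded by the running assumptions or follows, through disjoint sets of variables, from Theorem \ref{thm4} together with Lemma \ref{lem11}.)
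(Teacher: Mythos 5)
Your proposal is correct and follows essentially the same route as the paper: peeling off the leaf $z$ and then its in-neighbour $y$ via the same two short exact sequences, identifying the resulting ideals through Lemma \ref{lem10}, and closing the double induction on $t$ and $|E(T_1)|$ with Theorem \ref{thm4}, Lemma \ref{lem11}, Lemma \ref{lem3} and Lemma \ref{lem6}(2), with the weight hypothesis $w\geq 2$ at non-leaf vertices absorbing the slack exactly as in the paper's estimates. The only differences are organizational (you isolate the intermediate bound $(\ast)$ explicitly, treat the cases $y\in V(C_m)$ and $y\notin V(C_m)$ uniformly, and handle $t=1$ inside the induction rather than by citation), so no substantive gap.
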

\begin{proof}  Let $V(D)=\{x_1,\ldots,x_m,x_{m+1},\ldots,x_n\}$, $V(C_m)=\{x_1,\ldots,x_m\}$ and $w_i=w(x_i)$ for $1\leq i\leq n$.

If  $V(T_1)\cap V(C_m)=\emptyset$, then  the result can be shown by similar arguments as case $V(T_1)\cap V(C_m)=\{x_{i}\}$ for some $1\leq i\leq m$, so we only prove that the conclusion holds under the condition that
 $V(T_1)\cap V(C_m)=\{x_{i}\}$ for some $1\leq i\leq m$. In this case, we set $i=m$ for convenience.
Thus $E(D)=\{x_1x_2,x_2x_3,\ldots,x_{m-1}x_m,x_mx_1,x_mx_{m+1},\\
x_{m+1}x_{m+2},\ldots,x_{n-1}x_{n}\}$ and  $x_n$ is the unique leaf of $D$. It follows that
\[
I(D)=(x_1x_2^{w_2},\ldots,x_{m-1}x_m^{w_m},x_mx_1^{w_1},x_mx_{m+1}^{w_{m+1}},x_{m+1}x_{m+2}^{w_{m+2}},\ldots,x_{n-1}x_{n}^{w_n}).
\]
We apply induction on  $t$ and $|E(T_1)|$. Case $t=1$ follows from
 \cite[Theorem 3.4]{Z6}. Now assume that $t\geq 2$.
 If $|E(T_1)|=1$, then $n=m+1$.  Consider the following two short exact sequences
 $$0\longrightarrow \frac{S}{(I(D)^t:x_{n}^{w_{n}})}(-w_{n})\stackrel{ \cdot x_{n}^{w_{n}}} \longrightarrow \frac{S}{I(D)^t}\longrightarrow \frac{S}{(I(D)^t,x_{n}^{w_{n}})}\longrightarrow 0 \eqno(1)$$
$$0\!\longrightarrow \!\frac{S}{(I(D)^t:x_{m}x_{n}^{w_{n}})}(-1)\stackrel{\cdot x_{m}} \longrightarrow \! \frac{S}{(I(D)^t:x_{n}^{w_{n}})}\!\longrightarrow\! \frac{S}{((I(D)^t:x_{n}^{w_{n}}),x_{m})} \!\longrightarrow \! 0.\eqno(2)$$
Notice that  $D\setminus x_m$ is a vertex-weighted rooted forest.  By Lemma \ref{lem10}, we have  $(I(D)^t,x_{n}^{w_{n}})=(I(C_{m})^t,x_{n}^{w_{n}})$, $((I(D)^t:x_{n}^{w_{n}}),x_{m})=(I(D\setminus x_m)^t,x_{m})$ and $(I(D)^t:x_{m}x_{n}^{w_{n}})=I(D)^{t-1}$.
 Thus by Lemma  \ref{lem3}, Lemma  \ref{lem11}, Theorem  \ref{thm4} and induction hypothesis on $t$, we obtain
\begin{eqnarray*}
\mbox{reg}\,((I(D)^t,x_{n}^{w_{n}}))\!\!&=&\!\!\mbox{reg}\,((I(C_{m})^t,x_{n}^{w_{n}}))
=\mbox{reg}\,(I(C_{m})^t)+\mbox{reg}\,((x_{n}^{w_{n}}))-1\\
&=&(\sum\limits_{i=1}^{m}w_{i}-m+1+(t-1)(w'+1))+w_{n}-1\\
&=&\sum\limits_{x\in V(D)}w(x)-|E(D)|+1+(t-1)(w'+1)\\
&\leq&\sum\limits_{x\in V(D)}w(x)-|E(D)|+1+(t-1)(w+1),\hspace{2.2cm} (3)
\end{eqnarray*}
where the fourth equality holds because of $n=m+1$  and the last inequality holds because of $w'\leq w$, where $w'=\mbox{max}\,\{w_{i}\mid 1\leq i\leq m\}$,
\begin{eqnarray*}
& & \mbox{reg}\,((I(D)^t:x_{m}x_{n}^{w_{n}})(-w_{n}\!-\!1))=\mbox{reg}\,(I(D)^{t-1})+w_{n}+1\\
&=&\!\!\!\!(\sum\limits_{x\in V(D)}w(x)-|E(D)|+1+(t-2)(w+1))+w_{n}+1\\
&\leq&\!\!\!\!\sum\limits_{x\in V(D)}w(x)-|E(D)|+1+(t-1)(w+1),\hspace{5.2cm} (4)
\end{eqnarray*}
where the last inequality holds because of $w_{n}\leq w$,
and
\begin{eqnarray*}
& &\mbox{reg}\,(((I(D)^t:x_{n}^{w_{n}}),x_{m})(-w_{n}))=\mbox{reg}\,((I(D\setminus x_m)^t,x_{m}))+w_{n}\\
&=&\!\!\!\mbox{reg}\,(I(D\setminus x_m)^t)+w_{n}\\
&=&\!\!\!\!\sum\limits_{x\in V(D\setminus x_m)}w(x)-|E(D\setminus x_m)|+1+(t-1)(w''+1)+w_{n}\\
&=&\!\!\!\!\sum\limits_{x\in V(D)}w(x)-|E(D)|+1+\!(t-1)(w''+1)+4-(w_{1}+w_{m})\\
&\leq&\!\!\!\!\sum\limits_{x\in V(D)}w(x)-|E(D)|+1+(t-1)(w+1) \hspace{5.0cm} (5)
\end{eqnarray*}
where the forth equality holds because   we have weighted one in vertex $x_1$ in the expression $\sum\limits_{x\in V(D\setminus x_m)}w(x)$ and
$|E(D)|=|E(D\setminus x_m)|+3$,    and the last inequality holds because of $w_{1},w_{m}\geq 2$ and $w''\leq w$, here $w''=\mbox{max}\,\{w_i\mid 2\leq i \leq m-1\}$.
Using Lemma \ref{lem2} and  Lemma \ref{lem6} (2) on the short exact sequences (1), (2) and formulas (3)$\sim$ (5), we have
\[
\mbox{reg}\,(I(D)^t)\leq\sum\limits_{x\in V(D)}w(x)-|E(D)|+1+(t-1)(w+1).
\]

Assume $|E(T)|\geq 2$, consider the short exact sequences
 $$0\longrightarrow \frac{S}{(I(D)^t:x_{n}^{w_n})}(-w_{n})\stackrel{ \cdot x_{n}^{w_n}} \longrightarrow \frac{S}{I(D)^t}\longrightarrow \frac{S}{(I(D)^t,x_{n}^{w_n})}\longrightarrow 0 \eqno(6)$$
and
$$0\!\!\longrightarrow \!\frac{S}{((I(D)^t\!:\!x_{n-1}x_{n}^{w_n}))}(-1)\!\stackrel{\cdot x_{n-1}} \longrightarrow  \!\frac{S}{(I(D)^t\!:\!x_{n}^{w_n})}\!\!\longrightarrow \frac{S}{((I(D)^t\!:\!x_{n}^{w_n}),x_{n-1})} \!\!\longrightarrow\!  0.\eqno(7)$$
Notice that  $(I(D)^t,x_{n}^{w_n})=(I(D\setminus x_n)^t,x_{n}^{w_n})$, $((I(D)^t:x_{n}^{w_n}),x_{n-1})=(I(D\setminus x_{n-1})^t,x_{n-1})$ and $(I(D)^t:x_{n-1}x_{n}^{w_n})=I(D)^{t-1}$ by  Lemma \ref{lem10},
both $D\setminus x_n$ and $D\setminus x_{n-1}$ are vertex-weighted oriented unicyclic graphs. Thus,  by Lemma  \ref{lem3}, Theorem  \ref{thm4} and induction hypotheses on $t$ and $|E(T)|$, we obtain
\begin{eqnarray*}
\mbox{reg}\,((I(D)^t,x_{n}^{w_n}))&=&\mbox{reg}\,((I(D\setminus x_n)^t,x_{n}^{w_n}))=\mbox{reg}\,(I(D\setminus x_{n})^t)+\mbox{reg}\,((x_{n}^{w_n}))-1\\
\!\!\!&\leq&\!\!\!(\sum\limits_{x\in V(D\setminus x_n)}\!\!\!w(x)-|E(D\setminus x_n)|+1+(t-1)(w_a+1))+w_{n}-1\\
\!\!\!&=&\!\!\!\sum\limits_{x\in V(D)}\!\!w(x)-|E(D)|+1+(t-1)(w_a+1)\\
\!\!\!&\leq&\!\!\!\sum\limits_{x\in V(D)}\!\!w(x)-|E(D)|+1+(t-1)(w+1),\hspace{2.8cm} (8)
\end{eqnarray*}
where the last inequality holds because of $w_a\leq w$, where $w_a=\mbox{max}\,\{w(x)\mid x\in V(D\setminus x_n)\}$,
\begin{eqnarray*}
\mbox{reg}\,((I(D)^t:x_{n-1}x_{n}^{w_n})(-w_n\!\!-\!\!1))&=&\mbox{reg}\,(I(D)^{t-1})+w_n+1\\
\!\!\!&\leq&\!\!\!\!\!\!\sum\limits_{x\in V(D)}\!\!\!w(x)\!-\!|E(D)|\!+\!1+\!(t-2)(w+1)\!+\!w_n\!\!+\!1\\
\!\!\!&=&\!\!\!\!\!\!\sum\limits_{x\in V(D)}\!\!\!w(x)\!-\!|E(D)|\!+\!1+\!(t-1)(w+1)\!+\!w_n\!\!-\!w\\
\!\!\!&\leq&\!\!\!\!\!\!\sum\limits_{x\in V(D)}\!\!\!w(x)-|E(D)|+1+(t-1)(w+1),\hspace{0.5cm} (9)
\end{eqnarray*}
where the last inequality holds because of $w_n\leq w$,
and
\begin{eqnarray*}
& &\mbox{reg}\,(((I(D)^t\!\!:\!x_{n}^{w_n}\!),x_{n-1})(-w_n))=\mbox{reg}\,((I(D\setminus x_{n-1})^t,x_{n-1}))+w_n\\
\!\!\!&=&\!\!\!\mbox{reg}\,(I(D\setminus x_{n-1})^t)+w_n\leq\sum\limits_{x\in V(D\setminus x_{n-1})}\!\!\!\!\!\!\!\!\!w(x)\!\!-\!|E(D\setminus \!x_{n-1})|\!\!+\!1\!\!+\!(t\!-\!1)(w_b\!+\!1)\!\!+w_n\\
\!\!\!&=&\!\!\!\!\!\!\sum\limits_{x\in V(D)}\!\!\!w(x)\!-\!|E(D)|\!+\!1\!+\!(t\!-\!1)(w_b\!+\!1)\!+\!2\!-\!w_{n-1}\\
\!\!\!&\leq&\!\!\!\!\!\!\sum\limits_{x\in V(D)}\!\!\!w(x)-|E(D)|+1+(t-1)(w+1) \hspace{5.5cm} (10)
\end{eqnarray*}
where the last inequality holds because of $w_{n-1}\geq 2$, $w_b\leq w$, here $w_b=\mbox{max}\,\{w(x),x\in V(D\setminus x_{n-1})\}$.
Using  Lemma \ref{lem2} and  Lemma \ref{lem6} (2) on the short exact sequences (6) (7) and inequalities (8) $\sim$ (10), we have
\[
\mbox{reg}\,(I(D)^t)\leq\sum\limits_{x\in V(D)}w(x)-|E(D)|+1+(w+1).
\]
\end{proof}

\medskip
Now we are ready to present the main result of this section.
\begin{Theorem}\label{thm5}
Let $D=(V(D),E(D),w)$ be a vertex-weighted oriented unicyclic graph as  Proposition \ref{prop4}. Then
$$\mbox{reg}\,(I(D)^t)=\sum\limits_{x\in V(D)}w(x)-|E(D)|+1+(t-1)(w+1)\ \ \mbox{for any}\ \ t\geq 1$$
where $w=\mbox{max}\,\{w(x)|x\in V(D)\}$.
\end{Theorem}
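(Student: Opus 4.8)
The plan is to prove the two inequalities separately, writing $R:=\sum_{x\in V(D)}w(x)-|E(D)|+1+(t-1)(w+1)$. The upper bound $\mbox{reg}\,(I(D)^t)\le R$ is exactly Proposition \ref{prop4}, so all the work goes into the reverse inequality $\mbox{reg}\,(I(D)^t)\ge R$. First I would set up the same double induction as in the proof of Proposition \ref{prop4}: induction on $t$, with base case $t=1$ (the non-power statement, already invoked in Proposition \ref{prop4} via \cite[Theorem 3.4]{Z6}), and, for fixed $t\ge 2$, induction on $|E(T_1)|$, with base case $|E(T_1)|\le 1$, where $D\setminus x_n=C_m$ and $D\setminus x_m$ is a rooted forest, so that Theorem \ref{thm4} and Lemma \ref{lem11} apply. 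The case $V(T_1)\cap V(C_m)=\emptyset$ runs verbatim, as noted in Proposition \ref{prop4}.

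The heart of the matter is to re-examine the two short exact sequences that were used there to get the upper bound, namely
\[
0\to \frac{S}{(I(D)^t:x_{n}^{w_{n}})}(-w_{n})\to \frac{S}{I(D)^t}\to \frac{S}{(I(D)^t,x_{n}^{w_{n}})}\to 0
\]
\[
0\to \frac{S}{(I(D)^t:x_{n-1}x_{n}^{w_{n}})}(-1)\to \frac{S}{(I(D)^t:x_{n}^{w_{n}})}\to \frac{S}{((I(D)^t:x_{n}^{w_{n}}),x_{n-1})}\to 0,
\]
where $x_n$ is the unique leaf of $D$. By Lemma \ref{lem10} the three outer ideals are $(I(D\setminus x_n)^t,x_{n}^{w_{n}})$, $I(D)^{t-1}$ and $(I(D\setminus x_{n-1})^t,x_{n-1})$, and by the induction hypotheses — together with Theorem \ref{thm4}, Lemma \ref{lem11} and Lemma \ref{lem3} — their regularities are now known \emph{exactly}, not merely bounded. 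Whereas Proposition \ref{prop4} only used Lemma \ref{lem6}(2) (which gives $\le$), here I would use parts (3)--(5): once one outer term dominates the others by the appropriate margin, the regularity of the middle term is forced. I would distinguish two cases according to where the maximal weight $w$ is attained. If $w=w(x)$ for some non-leaf vertex $x$, then $x$ survives the deletion of $x_n$, so (using Lemma \ref{lem3}) $\mbox{reg}\,((I(D\setminus x_n)^t,x_{n}^{w_{n}}))-1=R-1$ exactly; using $w(y)\ge 2$ for all non-leaves $y$ and $w_j\le w$ one checks that this term dominates the colon term propagated through the second sequence, so Lemma \ref{lem6}(4) yields $\mbox{reg}\,(S/I(D)^t)=R-1$, i.e. $\mbox{reg}\,(I(D)^t)=R$. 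If instead $w$ is attained only at the leaf $x_n$ (so $w_n=w$), I would instead exploit $(I(D)^t:x_{n-1}x_n^{w_n})=I(D)^{t-1}$: the left-hand term of the two composed sequences, $\frac{S}{I(D)^{t-1}}(-w_n-1)$, has regularity $\mbox{reg}\,(I(D)^{t-1})+w_n=R-1$ by the induction hypothesis on $t$, and propagating this lower bound up through the two sequences with Lemma \ref{lem6}(3) gives $\mbox{reg}\,(I(D)^t)\ge R$.

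The hard part will be this second case. To apply Lemma \ref{lem6}(3) one needs the competing cokernel $\frac{S}{(I(D)^t,x_{n-1}x_n^{w_n})}$ to have regularity strictly below $R-1$; since $x_{n-1}$ still divides generators of $I(D\setminus x_n)^t$, the ideal $(I(D)^t,x_{n-1}x_n^{w_n})$ equals $I(D\setminus x_n)^t+(x_{n-1}x_n^{w_n})$, and bounding its regularity requires one further short exact sequence — splitting off $x_n^{w_n}$ — together with a careful combination of $w_{n-1}\ge 2$ and the fact that in this case every weight other than $w_n$ is $\le w-1$. The margins in Lemma \ref{lem6} are tightest for small $t$, and $t=2$ in particular may need a separate, more computational treatment, for instance via polarization and Lemma \ref{lem9} in the spirit of the proof of Theorem \ref{thm4}. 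Once the lower bound is established for all $t$ and all tail lengths, combining it with Proposition \ref{prop4} completes the proof.
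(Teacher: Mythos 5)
Your upper bound is indeed exactly Proposition \ref{prop4}, and your lower-bound strategy is genuinely different from the paper's: the paper never returns to the leaf-deletion exact sequences for Theorem \ref{thm5}, but instead polarizes $I(D)^t$ and peels off the $t$-th powers of the edge generators one at a time through a chain of Betti splittings $J_{i-1}=J_i+K_i$ (Lemma \ref{lem1}, Corollary \ref{cor1}), computing $\mbox{reg}\,(J_i\cap K_i)=\mbox{reg}\,(K_iL_i)$ exactly via Lemmas \ref{lem4} and \ref{lem9} and bounding the leftover $L^{\mathcal P}$ by the induced-subhypergraph argument (Lemma \ref{lem8}) together with Proposition \ref{prop4}; the maximum in the Betti-splitting formula then produces the term $(t-1)(w+1)$ uniformly in $t$ and independently of where the maximal weight sits. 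Your first case (the maximum weight attained at some vertex other than the leaf $x_n$) does go through: there $\mbox{reg}\,((I(D\setminus x_n)^t,x_n^{w_n}))=R$ exactly by induction on the tail length and Lemma \ref{lem3}, and the estimates (9)--(10) from Proposition \ref{prop4} give $\mbox{reg}\,(S/(I(D)^t:x_n^{w_n}))+w_n\leq R-1$, so Lemma \ref{lem6}(4) forces $\mbox{reg}\,(I(D)^t)=R$ for every $t\geq 2$.

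The genuine gap is your second case, where $w$ is attained only at the leaf, for $t=2$. Your plan needs the strict inequalities of Lemma \ref{lem6}(3), and these degenerate to equalities precisely at $t=2$: with $w_n=w$, $w_b\leq w-1$ and $w_{n-1}\geq 2$ one only gets
$\mbox{reg}\,\bigl(S/(I(D\setminus x_{n-1})^t,x_{n-1})\bigr)\leq R-w-t$ and
$\mbox{reg}\,\bigl(S/(I(D\setminus x_n)^t,x_n^{w_n})\bigr)\leq R-t$, while the colon terms sit at $R-w-1$ and $R-1$ respectively; for $t=2$ both comparisons become ``$\,>\,$'' versus an equal quantity (and equality really occurs, e.g.\ all weights $2$ on the cycle and interior of the tail and leaf weight $3$, so $w_b=w-1$, $w_{n-1}=2$), so Lemma \ref{lem6}(3) does not apply and neither exact sequence pins the regularity from below. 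Since the theorem asserts the formula for all $t\geq 1$ and the hypotheses allow the pendant vertex to carry the unique maximal weight, this subcase cannot be excluded; you acknowledge it but your proposed remedy (``polarization and Lemma \ref{lem9} in the spirit of Theorem \ref{thm4}'') is not carried out, and it is in substance the Betti-splitting computation the paper actually performs for all $t$. As written, the proof is therefore incomplete exactly at $t=2$ with the maximum weight on the leaf; either execute that polarization argument there, or adopt the paper's splitting chain throughout.
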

\begin{proof}
Case $t=1$ follows from \cite[Theorem 3.5]{Z6}. Now we assume $t\geq 2$.
If  $V(T_1)\cap V(C_m)=\emptyset$, or $V(T_1)\cap V(C_m)=\{x_{i}\}$ for some $1\leq i\leq m$ and $|E(T_1)|\leq 3$, then  the conclusion can be shown by similar arguments as case $V(T_1)\cap V(C_m)=\{x_{i}\}$ for some $1\leq i\leq m$ and $|E(T_1)|\geq 4$, so we only prove the conclusion holds under the condition that
 $V(T_1)\cap V(C_m)=\{x_{i}\}$ for some $1\leq i\leq m$ and  $|E(T_1)|\geq 4$.
  In this case, we set $i=m$ for convenience.
Thus $E(D)=\{x_1x_2,x_2x_3,\ldots,x_{m-1}x_m,x_mx_1,x_mx_{m+1},x_{m+1}x_{m+2},\ldots,x_{n-1}x_{n}\}$. It follows that
\[
I(D)=(x_1x_2^{w_2},\ldots,x_{m-1}x_m^{w_m},x_mx_1^{w_1},x_mx_{m+1}^{w_{m+1}},x_{m+1}x_{m+2}^{w_{m+2}},\ldots,x_{n-1}x_{n}^{w_n}).
\]
Let $L$ be an ideal satisfying
\[
\mathcal{G}(L)=\mathcal{G}(I(D)^t)\setminus \mathcal{G}(M),
\]
where $M=((x_1x_2^{w_2})^t,\ldots,(x_{m-1}x_m^{w_m})^t,(x_mx_1^{w_1})^t,(x_mx_{m+1}^{w_{m+1}})^t
,\ldots,(x_{n-1}x_n^{w_n})^t)$.  Let $J_0$ be the polarization of $I(D)^t$, then
\[
J_0=M^{\mathcal{P}}+L^{\mathcal{P}}
\]
with $\mathcal{G}(J_0)=\mathcal{G}(M^{\mathcal{P}})\cup\mathcal{G}(L^{\mathcal{P}})$ and $\mathcal{G}(M^{\mathcal{P}})\cap\mathcal{G}(L^{\mathcal{P}})=\emptyset$.

For $1\leq i\leq n-2$, we set $K_{i}=((\prod\limits_{j=1}^{t}x_{i,\,j})(\prod\limits_{j=1}^{tw_{i+1}}x_{i+1,\,j}))$,
\begin{eqnarray*}
J_i&=&(\Widehat{(\prod\limits_{j=1}^{t}x_{1j})(\prod\limits_{j=1}^{tw_2}x_{2j})},\ldots,
\Widehat{(\prod\limits_{j=1}^{t}\!x_{i,\,j})(\prod\limits_{j=1}^{tw_{i+1}}x_{i+1,\,j})},
(\prod\limits_{j=1}^{t}\!x_{i+1,\,j})(\prod\limits_{j=1}^{tw_{i+2}}x_{i+2,\,j}),\ldots,\\
& &(\prod\limits_{j=1}^{t}\!\!x_{n-1,\,j})(\prod\limits_{j=1}^{tw_{n}}\!x_{n,\,j}),
(\prod\limits_{j=1}^{t}x_{m,\,j})(\prod\limits_{j=1}^{tw_1}x_{1j}))+L^{\mathcal{P}},
\end{eqnarray*}
 where $\Widehat{(\prod\limits_{j=1}^{t}\!x_{i,\,j})(\prod\limits_{j=1}^{tw_{i+1}}x_{i+1,\,j})}$ denotes the element
 $(\prod\limits_{j=1}^{t}\!x_{i,\,j})(\prod\limits_{j=1}^{tw_{i+1}}x_{i+1,\,j})$ being
 omitted from $J_i$.

Remind: when $i=m$, we set  $K_{m}=((\prod\limits_{j=1}^{t}x_{m,\,j})(\prod\limits_{j=1}^{tw_{m+1}}x_{m+1,\,j}))$,
\[
J_m=((\prod\limits_{j=1}^{t}x_{m+1,\,j})(\prod\limits_{j=1}^{tw_{m+2}}x_{m+2,\,j}),\ldots,
(\prod\limits_{j=1}^{t}x_{n-1,\,j})(\prod\limits_{j=1}^{tw_{n}}x_{n,\,j}),
(\prod\limits_{j=1}^{t}x_{m,\,j})(\prod\limits_{j=1}^{tw_1}x_{1j}))+L^{\mathcal{P}}.
\]
Let $K_{n-1}=((\prod\limits_{j=1}^{t}x_{n-1,\,j})(\prod\limits_{j=1}^{tw_{n}}x_{n,\,j}))$, $J_{n-1}=((\prod\limits_{j=1}^{t}x_{m,\,j})(\prod\limits_{j=1}^{tw_1}x_{1j}))+L^{\mathcal{P}}$,\\
$K_{n}=((\prod\limits_{j=1}^{t}x_{m,\,j})(\prod\limits_{j=1}^{tw_1}x_{1j}))$, $J_{n}=L^{\mathcal{P}}$.
Then for $1\leq i\leq n$, we have
$$J_{i-1}=J_{i}+K_{i}\ \  \text{and}\ \  J_{i}\cap K_{i}=K_{i}L_{i},$$
 \begin{eqnarray*}
L_1&=&(\prod\limits_{j=1}^{w_{3}}x_{3j},x_{31}\prod\limits_{j=1}^{w_{4}}x_{4j},\ldots,
x_{n-1,1}\prod\limits_{j=1}^{w_{n}}x_{n,j},x_{m,1}\prod\limits_{j=t+1}^{t-1+w_{1}}x_{1j}),\\
L_2&=&(x_{11}\prod\limits_{j=t+1}^{t-1+w_{2}}x_{2j},\prod\limits_{j=1}^{w_{4}}x_{4j},x_{41}\prod\limits_{j=1}^{w_{5}}x_{5j},\ldots,
x_{n-1,1}\prod\limits_{j=1}^{w_{n}}x_{n,j},x_{m,1}\prod\limits_{j=1}^{w_{1}}x_{1j}),\\
L_{i}&=&(x_{11}\prod\limits_{j=1}^{w_{2}}x_{2j},\ldots,x_{i-1,1}\!\!\!\!\prod\limits_{j=t+1}^{t-1+w_{i}}\!\!\!\!x_{i,\,j},
\prod\limits_{j=1}^{w_{i+2}}x_{i+2,\,j},x_{i+2,1}\prod\limits_{j=1}^{w_{i+3}}x_{i+3,\,j},\ldots,
x_{n-1,1}\prod\limits_{j=1}^{w_{n}}x_{n,j},\\
& &x_{m,1}\prod\limits_{j=1}^{w_{1}}x_{1j}) \ \ \text{for all}\ \ 3\leq i\leq m-2 \ \ \text{or}\ \ m+1\leq i\leq n-3,\\
L_{m-1}&=&(x_{11}\prod\limits_{j=1}^{w_{2}}x_{2j},\ldots,x_{m-3,1}\!\!\!\!\prod\limits_{j=1}^{w_{m-2}}\!\!\!x_{m-2,\,j},
x_{m-2,1}\!\!\!\!\prod\limits_{j=t+1}^{t-1+w_{m-1}}\!\!\!x_{m-1,\,j},
\prod\limits_{j=1}^{w_{m+1}}x_{m+1,\,j},\\
 & &x_{m+1,\,1}\prod\limits_{j=1}^{w_{m+2}}x_{m+2,\,j},\ldots,
x_{n-1,1}\prod\limits_{j=1}^{w_{n}}x_{n,j},\prod\limits_{j=1}^{w_{1}}x_{1j}),\\
\end{eqnarray*}
 \begin{eqnarray*}
 L_{m}&=&(x_{11}\prod\limits_{j=1}^{w_{2}}x_{2j},\ldots,x_{m-2,1}\!\!\!\!\prod\limits_{j=1}^{w_{m-1}}\!\!\!x_{m-1,\,j},
x_{m-1,1}\!\!\!\!\prod\limits_{j=t+1}^{t-1+w_{m}}\!\!\!x_{m,\,j},
\prod\limits_{j=1}^{w_{m+2}}x_{m+2,\,j},\\
& &x_{m+2,\,1}\prod\limits_{j=1}^{w_{m+3}}x_{m+3,\,j},\ldots,
x_{n-1,1}\prod\limits_{j=1}^{w_{n}}x_{n,j},\prod\limits_{j=1}^{w_{1}}x_{1j}),\\
L_{n-2}&=&(x_{11}\prod\limits_{j=1}^{w_{2}}x_{2j},\ldots,x_{n-4,1}\prod\limits_{j=1}^{w_{n-3}}x_{n-3,\,j},
x_{n-3,\,1}\!\!\!\!\prod\limits_{j=t+1}^{t-1+w_{n-2}}\!\!\!\!x_{n-2,\,j},\prod\limits_{j=1}^{w_{n}}x_{n,j},x_{m,1}\prod\limits_{j=1}^{w_{1}}x_{1j}),\\
L_{n-1}&=&(x_{11}\prod\limits_{j=1}^{w_{2}}x_{2j},\ldots,x_{n-3,1}\prod\limits_{j=1}^{w_{n-2}}x_{n-2,\,j},
x_{n-2,\,1}\prod\limits_{j=t+1}^{t-1+w_{n-1}}x_{n-1,\,j},x_{m,1}\prod\limits_{j=1}^{w_{1}}x_{1j}),
\end{eqnarray*}
\begin{eqnarray*}
L_{n}&=&(\prod\limits_{j=1}^{w_{2}}x_{2j},x_{21}\prod\limits_{j=1}^{w_{3}}x_{3j},\ldots,x_{m-2,1}\!\!\!\!\prod\limits_{j=1}^{w_{m-1}}\!\!\!x_{m-1,\,j},
x_{m-1,1}\!\!\!\!\prod\limits_{j=t+1}^{t-1+w_{m}}\!\!\!x_{m,\,j},
\prod\limits_{j=1}^{w_{m+1}}x_{m+1,\,j},\\
& &x_{m+1,\,1}\prod\limits_{j=1}^{w_{m+2}}x_{m+2,\,j},\ldots,
x_{n-1,1}\prod\limits_{j=1}^{w_{n}}x_{n,j}).
\end{eqnarray*}

Thus for $1\leq i \leq n$, $|\supp\,(L_{i})|=\sum\limits_{i=1}^{n}w(x)-w_{i+1}-1$ and $|\mathcal {G}(L_{i})|=n-1$.
By similar arguments as Proposition \ref{prop1}, we obtain $$\mbox{reg}\,(L_{i})=(\sum\limits_{i=1}^{n}w(x)-w_{i+1}-1)-(n-1)+1=\sum\limits_{i=1}^{n}w(x)-n+1-w_{i+1},$$
where $w_{n+1}=w_{1}$.  Notice that the  variables appear in $K_{i}$ and $L_{i}$ are different,  by Lemma   \ref{lem4} (2), we have
\begin{eqnarray*}
\mbox{reg }\,(J_{i}\cap K_{i})&=&\mbox{reg }\,(K_{i}L_{i})
=\mbox{reg }\,(K_{i})+\mbox{reg}\,(L_{i})\\
&=&t(w_{i+1}+1)+(\sum\limits_{i=1}^{n}w(x)-n+1-w_{i+1})\\
&=&\sum\limits_{i=1}^{n}w(x)-n+2+(t-1)(w_{i+1}+1).  \ \ \hspace{4.0cm} (1)
\end{eqnarray*}

Let $H=(V(H), \mathcal{E}(H))$ and $H'=(V(H'), \mathcal {E}(H'))$ are hypergraphs associated to $\mathcal{G}(J)$ and $\mathcal{G}(L^{\mathcal{P}})$ respectively,
then  $H'$ is an induced subhypergraph of $H$  by similar arguments as Theorem \ref{thm4}.  Thus by  Lemma \ref{lem5} (2), Lemma \ref{lem8} and Proposition \ref{prop4}, we get
\begin{eqnarray*}
\mbox{reg }\,(J_n)&=&\mbox{reg }\,(L^{\mathcal{P}})\leq \mbox{reg }\,((I(D)^t)^{\mathcal{P}})=\mbox{reg }\,(I(D)^t)\\
&\leq& \sum\limits_{x\in V(D)}w(x)-|E(D)|+1+(t-1)(w+1). \hspace{4.0cm} (2)
\end{eqnarray*}

For any $1\leq i \leq n$, the variable $x_{{i+1},tw_{i+1}}$ in $K_{i}$ is not a factor of any minimal generator of $J_{i}$ and $K_{i}$ has a linear resolution.
We have
$J_{i}=J_{i+1}+K_{i+1}$ is  Betti splitting by Lemma \ref{lem1}. Hence by Corollary \ref{cor1}, we obtain
\[
\mbox{reg}\,(J_{i-1})=\mbox{max}\{\mbox{reg}\,(K_{i}),\mbox{reg}\,(J_{i}), \mbox{reg}\,(K_{i}\cap J_{i})-1\}.\eqno(3)
\]
Let $\alpha=\mbox{reg}\,(K_{i})$, $\beta=\sum\limits_{x\in V(D)}w(x)-|E(D)|+1+(t-1)(w+1)$, then
\begin{eqnarray*}
\hspace{2.0cm} \mbox{reg}\,(K_{i})&=&t(w_{i+1}+1),\\
\beta-\alpha&=&(\sum\limits_{x\in V(D)}w(x)-|E(D)|+1+(t-1)(w+1))-t(w_{i+1}+1)\\
&\geq&\sum\limits_{\left.\begin{subarray}{l}x\in V(D)\\
x\neq x_{i+1}
\end{subarray}\right.}
w(x)-|E(D)|\geq 0.\hspace{5.2cm} (4)
\end{eqnarray*}
By Lemma \ref{lem5} (2), repeated use of the above the equality (3) and comparing formulas (1), (2), (4), we obtain
\begin{eqnarray*}
\mbox{reg}\,(I(D)^t)&=&\mbox{reg}\,(J_{0})=\mbox{max}\{\mbox{reg }\,(J_n),\mbox{reg}\,(K_{i}),\mbox{reg}\,(K_{i}\cap J_{i})-1,
 \  \text{for } \ 1\leq i \leq n \}\\
&=&\mbox{max}\{\mbox{reg }\,(J_n),t(w_{i+1}+1),\\
& &
\sum\limits_{x\in V(D)}w(x)-|E(D)|+2+(t-1)(w_i+1)-1, \  \text{for } \ 1\leq i \leq n\}\\
&=&\sum\limits_{x\in V(D)}w(x)-|E(D)|+1+(t-1)(w+1).
\end{eqnarray*}
The result follows.
\end{proof}

\begin{Theorem}\label{thm6}
Let $D=(V(D),E(D),w)$ be a vertex-weighted oriented unicyclic graph,
where $D=C_m\cup T$ and  $T$ is an oriented forest.  Let $w(x)\geq 2$ for any $d(x)\neq 1$.
Then
$$\mbox{reg}\,(I(D)^{t})=\sum\limits_{x\in V(D)}w(x)-|E(D)|+1+(t-1)(w+1)\ \ \  \mbox{for any}\ \ t\geq 1,$$
where $w=\mbox{max}\,\{w(x)\mid x \in V(D)\}$.
\end{Theorem}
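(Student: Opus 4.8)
The plan is to reduce Theorem \ref{thm6} to the already-established case of a single attached tree, namely Theorem \ref{thm5}, by peeling off the trees $T_1,\dots,T_s$ one at a time via short exact sequences and induction on $s$ (and on $t$). Write $D=C_m\cup(\bigcup_{j=1}^s T_j)$ with $V(G_0)\cap V(G_j)=\{x_{i_j}\}$, and set $W=\sum_{x\in V(D)}w(x)-|E(D)|+1$ so that the claimed value is $W+(t-1)(w+1)$. When $s\le 1$ this is exactly Theorem \ref{thm5}, which serves as the base case. For $s\ge 2$, pick a leaf $z$ of $T_s$ that is furthest from the root $x_{i_s}$, with $N_D^-(z)=\{y\}$, and run the two short exact sequences
$$0\longrightarrow \frac{S}{(I(D)^t:z^{w_z})}(-w_z)\stackrel{\cdot z^{w_z}}{\longrightarrow} \frac{S}{I(D)^t}\longrightarrow \frac{S}{(I(D)^t,z^{w_z})}\longrightarrow 0,$$
$$0\longrightarrow \frac{S}{(I(D)^t:yz^{w_z})}(-1)\stackrel{\cdot y}{\longrightarrow} \frac{S}{(I(D)^t:z^{w_z})}\longrightarrow \frac{S}{((I(D)^t:z^{w_z}),y)}\longrightarrow 0.$$
By Lemma \ref{lem10} the three ``smaller'' modules are governed by: $(I(D)^t,z^{w_z})=(I(D\setminus z)^t,z^{w_z})$, where $D\setminus z$ is again a vertex-weighted oriented unicyclic graph with one fewer edge; $(I(D)^t:yz^{w_z})=I(D)^{t-1}$; and $((I(D)^t:z^{w_z}),y)=(I(D\setminus y)^t,y)$, where $D\setminus y$ is a disjoint union of a vertex-weighted oriented unicyclic graph and a (possibly empty) vertex-weighted rooted forest.

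The induction then proceeds as follows. For the first summand, apply Lemma \ref{lem3} to split off $(z^{w_z})$ and the induction hypothesis on the number of edges of $D\setminus z$, together with $w(z)\le w$ and $w(y)\ge 2$ (since $y$ is not a leaf once $T_s$ has at least two edges; the one-edge case is handled exactly as the $|E(T_1)|=1$ subcase in Proposition \ref{prop4}), to bound its regularity by $W+(t-1)(w+1)$. For the term $I(D)^{t-1}(-w_z-1)$, the induction hypothesis on $t$ gives regularity $W+(t-2)(w+1)+w_z+1 \le W+(t-1)(w+1)$ because $w_z\le w$. For the last term, $D\setminus y$ splits as a disjoint union over the polynomial ring, so Lemma \ref{lem3} reduces $\mbox{reg}\,(I(D\setminus y)^t)$ to the sum of the regularities of the unicyclic piece and each rooted-forest piece, handled by the induction hypothesis and Lemma \ref{lem11} respectively; the bookkeeping $|E(D)|=|E(D\setminus y)|+\deg_D(y)$ and the hypotheses $w(x)\ge 2$ for $d(x)\ne 1$ then yield a bound $\le W+(t-1)(w+1)$, exactly as in inequalities (3)--(5) and (8)--(10) of Proposition \ref{prop4}. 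Feeding these three bounds into Lemma \ref{lem6}(2) on the two exact sequences gives $\mbox{reg}\,(I(D)^t)\le W+(t-1)(w+1)$.

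For the reverse inequality I would mimic Step 2 of Theorem \ref{thm4} / the Betti-splitting argument in Theorem \ref{thm5}: polarize $I(D)^t$, peel off the generators $(x_{e})^t$ for each edge $e$ one at a time, and use Lemma \ref{lem1} to see each peeling is a Betti splitting (the top polarized variable of the peeled generator divides no remaining generator, and the principal ideal has a linear resolution). Corollary \ref{cor1} then expresses $\mbox{reg}\,(I(D)^t)^{\mathcal P}$ as a maximum over $\mbox{reg}\,(K_i)$, the leftover $\mbox{reg}\,(L^{\mathcal P})$ (bounded above by the already-proven upper bound), and the intersection terms $\mbox{reg}\,(K_i\cap J_i)-1$; computing one such intersection term via Lemma \ref{lem4}(2) and Lemma \ref{lem9} — exactly as in formula (1) of Theorem \ref{thm5} — produces the value $W+(t-1)(w+1)$ when $e$ is an edge of $C_m$ incident to a maximum-weight vertex, and the inequality $\beta\ge\alpha$ coming from $w(x)\ge 2$ shows this term dominates. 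I expect the main obstacle to be the last term in the first exact-sequence argument: verifying that removing a \emph{non-leaf} vertex $y$ lying on the cycle (when $T_s$ is a single edge attached to the cycle) or at a branch point produces exactly a clean disjoint union of a smaller unicyclic graph and rooted forests with the induced weights behaving correctly — this is where the induced-subgraph conventions for weights of new sources, and the precise edge-count bookkeeping, have to be checked carefully to keep the estimate tight.
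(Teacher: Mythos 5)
Your upper-bound argument is essentially the paper's: the same two short exact sequences (colon by $z^{w_z}$, then by $y$), Lemma \ref{lem10}, Lemma \ref{lem3}, and induction on $t$ and the number of tree edges. But as stated your induction does not close: you take the base case ``$s\le 1$'' to be Theorem \ref{thm5}, whereas Theorem \ref{thm5} (via Proposition \ref{prop4}) only covers $D=C_m\cup T_1$ with $T_1$ an oriented \emph{line} graph, not an arbitrary attached tree, so for $s=1$ with a branching $T_1$ you have no base case. The correct reduction --- and the one the paper makes --- is to the situation where $D$ has at most one leaf, i.e.\ you must keep peeling leaves also when $s=1$ until only a path (or nothing) remains attached; your own machinery does this, but the proposal as written stops too early. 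Relatedly, your description of $((I(D)^t:z^{w_z}),y)=(I(D\setminus y)^t,y)$ as ``a smaller unicyclic graph plus rooted forests'' fails when $y\in V(C_m)$ (then $D\setminus y$ is a rooted forest, to be handled by Lemma \ref{lem11}), and the bookkeeping when children of $y$ become new sources or isolated vertices (induced weight $1$; isolated vertices must not be counted in $\sum w(x)-|E|$) is exactly where the estimate is delicate: the paper isolates the degenerate situation of a single edge component $\{y,z\}$ disjoint from the cycle as a separate Case (1), treated by the mapping-cone type sequence $0\to I(D\setminus z)^t(-w_z-1)\to I(D\setminus z)^t\oplus I(D)^{t-1}(-w_z-1)\to I(D)^t\to 0$ and Lemma \ref{lem6}(5), and its key inequality in Case (2) uses $d(y)\ge2$, $w_y\ge2$. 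You flag this verification but do not carry it out.

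Where you genuinely diverge is the equality. The paper never polarizes in Theorem \ref{thm6}: it chooses the leaf $z$ of smaller weight ($w_z\le w_x$), so the maximum weight survives in $D\setminus z$ and $\mbox{reg}\,((I(D)^t,z^{w_z}))$ equals the claimed value \emph{exactly} by induction; it then shows this term dominates the other two and concludes with Lemma \ref{lem6}(4) (resp.\ \ref{lem6}(5) in Case (1)), so upper and lower bounds come out of the same induction. Your alternative --- polarize, peel off the generator $u=\mathcal{P}((x_ax_b^{w_b})^t)$ with $w_b=w$ as a Betti splitting (Lemma \ref{lem1}, Corollary \ref{cor1}), and read off the value from $\mbox{reg}\,(J^{\mathcal{P}}\cap K^{\mathcal{P}})-1$ via Lemmas \ref{lem4} and \ref{lem9}, bounding the leftover by Lemma \ref{lem8} and the already-proved upper bound --- is a reasonable generalization of Step 2 of Theorem \ref{thm4} and of Theorem \ref{thm5}, and it buys you the equality without the careful choice of $z$ or the Case (1)/(2) split. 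Two caveats: the maximum-weight vertex need not lie on $C_m$ (what you need is the unique edge whose \emph{head} has weight $w$, which exists since in-degree is at most one and sources have weight $1$), and the computation of $(J^{\mathcal{P}}:u)$ for an arbitrary attached forest, asserted ``exactly as in formula (1) of Theorem \ref{thm5}'', is precisely the content that still has to be written out (one generator per remaining edge, each with a private polarized variable). So the shape is right, but the base case of the induction and these unperformed computations are genuine gaps in the proposal as it stands.
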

\begin{proof}
 We apply  induction on  $t$ and $|E(T)|$. The case $t=1$ follows from \cite[Theorem 3.5]{Z6}. Now assume that $t\geq 2$.
By Theorem \ref{thm5}, we  just need to prove the results hold  under the condition that
  there are at least two leaves in $D$.
 Let  $x,z$ be leaves of $D$ with $w_z\leq w_x$ and $N_{D}^{-}(z)=\{y\}$.
 We distinguish into two cases:

(1) If there exists a connected component $T_1$ of $T$ such that $E(T_1)=\{yz\}$ and $y\notin V(C_m)$.
Then
$$I(D)^{t}=I(D\setminus z)^{t}+(yz^{w_z})I(D)^{t-1}.$$
Thus there exists a surjection $\phi :\ I(D\setminus z)^{t}\oplus  I(D)^{t-1}(-w_{z}-1)\overset{\cdot (1,\,yz^{w_z})}\longrightarrow  I(D)^{t}$
and the kernel of $\phi$ is  $(yz^{w_z})I(D\setminus z)^{t}$ since $yz^{w_z}$ is a non-zero divisor of $S/I(D\setminus z)$.
Therefore, we have the  following short exact sequence
$$0\longrightarrow I(D\setminus z)^{t}(-w_{z}-1)\longrightarrow I(D\setminus z)^{t}\oplus  I(D)^{t-1}(-w_{z}-1)\overset{\cdot (1,\,yz^{w_z})}\longrightarrow  I(D)^{t} \longrightarrow 0.$$
By induction hypotheses on $t$ and $|E(T)|$, we obtain
\begin{eqnarray*}\mbox{reg}\,(I(D)^{t-1}(-w_{z}-1))&=&\mbox{reg}\,(I(D)^{t-1})+w_{z}+1\\
&=&(\sum\limits_{x\in V(D)}w(x)-|E(D)|+1+(t-2)(w+1))+w_{z}+1\\
&=&\sum\limits_{x\in V(D)}w(x)-|E(D)|+1+(t-1)(w+1)+w_{z}-w,\\
\end{eqnarray*}
\begin{eqnarray*}
& &\mbox{reg}\,(I(D\setminus z)^{t}(-w_{z}-1))=\mbox{reg}\,(I(D\setminus z)^{t})+w_{z}+1\\
&=&(\sum\limits_{x\in V(D\setminus z)}w(x)-|E(D\setminus z)|+1+(t-1)(w+1))+w_{z}+1\\
&=&\sum\limits_{x\in V(D)}w(x)-|E(D)|+1+(t-1)(w+1)+1.
\end{eqnarray*}
Since  $w_z\leq w_x$, we get
\[
\mbox{reg}\,(I(D\setminus z)^{t}(-w_{z}-1))>\mbox{max}\,\{\mbox{reg}\,(I(D)^{t-1}(-w_{z}-1)),\mbox{reg}\,(I(D\setminus z)^{t})\}.
\]
Thus the result follows from Lemma \ref{lem6} (5).

(2)  If there is no connected component containing $z$ in  $T$ such as (1), then $d(y)\geq 2$. Consider the following short exact sequences
 $$0\longrightarrow \frac{S}{(I(D)^{t}:z^{w_z})}(-w_z)\stackrel{ \cdot z^{w_z}} \longrightarrow \frac{S}{I(D)^{t}}\longrightarrow \frac{S}{(I(D)^{t},z^{w_z})}\longrightarrow 0    \eqno(1)$$
$$0\longrightarrow \frac{S}{((I(D)^{t}:z^{w_z}):y)}(-1)\stackrel{\cdot y} \longrightarrow \frac{S}{(I(D)^{t}:z^{w_z})}\longrightarrow \frac{S}{((I(D)^{t}:z^{w_z}),y)} \longrightarrow 0 \eqno(2)$$
Note that $D\setminus z$ is  a vertex-weighted oriented unicyclic graph,  $D\setminus y$ is  an oriented unicyclic graph or a  rooted forest, and $w_z\leq w_x$,
thus, Lemma \ref{lem3}, Lemma \ref{lem11} or Theorem \ref{thm4} and induction hypotheses on  $t$ and $|E(T)|$, we obtain
\begin{eqnarray*}
& &\mbox{reg}\,((I(D)^t,z^{w_z}))=\mbox{reg}\,((I(D\setminus z)^{t},z^{w_z}))
=\mbox{reg}\,((I(D\setminus z)^{t}))+\mbox{reg}\,((z^{w_z}))-1\\
&=&[\sum\limits_{x\in V(D\setminus z)}\!\!w(x)-|E(D\setminus z)|+1+(t-1)(w+1)]+w_z-1\\
&=&\sum\limits_{x\in V(D)}w(x)-|E(D)|+1+(t-1)(w+1), \hspace{5.4cm}\  \ (3)
\end{eqnarray*}
\begin{eqnarray*}
& &\mbox{reg}\,((I(D)^{t}:yz^{w_z})(-w_{z}-1))=\mbox{reg}\,((I(D)^{t-1}))+w_{z}+1\\
&=&(\sum\limits_{x\in V(D)}w(x)-|E(D)|+1+(t-2)(w+1))+w_{z}+1\\
&=&\sum\limits_{x\in V(D)}w(x)-|E(D)|+1+(t-1)(w+1)+w_{z}-w\\
&\leq& \sum\limits_{x\in V(D)}w(x)-|E(D)|+1+(t-1)(w+1). \hspace{5.4cm}\ (4)
\end{eqnarray*}
\begin{eqnarray*}
& &\mbox{reg}\,(((I(D)^{t}:z^{w_z}),y)(-w_z))=\mbox{reg}\,(((I(D\setminus y)^t,y))+w_z\\
&=&\mbox{reg}\,((I(D\setminus y)^t))+w_z=(\!\!\!\!\sum\limits_{x\in V(D\setminus y)}\!\!\!\!w(x)-|E(D\setminus y)|+1+(t-1)(w''+1))+w_z,
\end{eqnarray*}
 where $w''=\mbox{max}\,\{w(x)\mid x \in V(D\setminus y)\}$.

Notice that   $N_D^{-}(y)=\emptyset$ or  $N_D^{-}(y)=\{y_1\}$. For case $N_D^{-}(y)=\emptyset$, it  can be shown by similar arguments as case $N_D^{-}(y)=\{y_1\}$. So we only prove  the conclusion holds under the condition that $N_D^{-}(y)=\{y_1\}$.
In this case, $w_y\geq2$ and we set $|E(D)|=|E(D\setminus y)|+\ell$, then $|N_{D}^{+}(y)\setminus \{z\}|=\ell-2$.
Let $\alpha=\mbox{reg}\,((I(D)^t,z^{w_z}))$, $\beta=\mbox{reg}\,(((I(D)^{t}:z^{w_z}),y)(-w_z)$, then
\begin{eqnarray*}
\alpha-\beta&=&\sum\limits_{x\in V(D)}w(x)-|E(D)|+1+(t-1)(w+1)\\
&-&[\sum\limits_{x\in V(D\setminus y)}w(x)-|E(D\setminus y)|+1+(t-1)(w''+1)+w_z]\\
&\geq & (\sum\limits_{x\in V(D)}w(x)-\sum\limits_{x\in V(D\setminus y)}w(x)-w_z)-(|E(D)|-|E(D\setminus y)|)\\
&\geq &(\ell-2+w_y)-\ell \geq 0,\hspace{8.0cm}\  \ (5)
\end{eqnarray*}
where the first inequality holds because of $w''\leq w$  and the second inequality holds because of $|N_{D}^{+}(y)\setminus \{z\}|=\ell-2$.
By formulas (3), (4) and  (5), we get
\[
\mbox{reg}\,((I(D)^t,z^{w_z}))\geq \mbox{max}\{\mbox{reg}\,((I(D)^{t}:yz^{w_z})(-w_z-1)),\mbox{reg}\,(((I(D)^{t}:z^{w_z}),y)(-w_z))\}.
\]
Using Lemma \ref{lem6} (2), (4) on
the short exact sequences (1), (2) and the equality (3),  we obtain
\[
\mbox{reg}\,(I(D)^{t})=
\mbox{reg}\,((I(D)^t,z^{w_z}))=\!\!\!\sum\limits_{x\in V(D)}\!\!\!w(x)-|E(D)|+1+(t-1)(w+1).
\]
The proof is completed.
\end{proof}

\medskip
 As a consequence of Theorem \ref{thm6}, we have
\begin{Corollary}\label{cor4}
Let $D=(V(D),E(D),w)$ be a vertex-weighted oriented unicyclic graph as  Theorem
\ref{thm6}. Then
$$\mbox{reg}\,(I(D)^{t})=\mbox{reg}\,(I(D))+(t-1)(w+1)\hspace{1cm}  \mbox{for any}\ \ t\geq 1,$$
where $w=\mbox{max}\,\{w(x)\mid x \in V(D)\}$.
\end{Corollary}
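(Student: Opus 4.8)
The plan is short: the corollary is a direct reformulation of Theorem~\ref{thm6} obtained by specializing $t=1$ and substituting. First I would put $t=1$ in the formula of Theorem~\ref{thm6}; the term $(t-1)(w+1)$ then vanishes and we obtain
$$\mbox{reg}\,(I(D))=\sum_{x\in V(D)}w(x)-|E(D)|+1 .$$
This is consistent with \cite[Theorem~3.5]{Z6}, which is exactly the $t=1$ base case that feeds the induction in the proof of Theorem~\ref{thm6}.

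Next, for an arbitrary $t\geq 1$, I would substitute the expression just obtained into the general formula of Theorem~\ref{thm6}, namely
$$\mbox{reg}\,(I(D)^{t})=\Big(\sum_{x\in V(D)}w(x)-|E(D)|+1\Big)+(t-1)(w+1)=\mbox{reg}\,(I(D))+(t-1)(w+1),$$
which is precisely the asserted identity. No induction, polarization, or Betti splitting is needed at this point, since all of that work has already been carried out in the proof of Theorem~\ref{thm6}.

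I do not expect any real obstacle. The only thing to verify is that the standing hypotheses of the corollary agree with those of Theorem~\ref{thm6} — that $D=C_m\cup T$ with $T$ an oriented forest and $w(x)\geq 2$ whenever $d(x)\neq 1$ — and this holds by the phrase ``as Theorem~\ref{thm6}'' in the statement. This is the exact analogue, for unicyclic graphs, of the way Corollary~\ref{cor2} is obtained from Theorem~\ref{thm4}: the corollary merely records that in this family the eventual linear form $at+b$ of \cite{CHT} has slope $a=w+1$ and intercept $b=\mbox{reg}\,(I(D))-(w+1)$, so that the regularity of every power is governed by the single invariant $\mbox{reg}\,(I(D))$ together with the maximum weight $w$.
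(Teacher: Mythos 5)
Your proposal is correct and matches the paper's (implicit) argument: the corollary is stated in the paper as an immediate consequence of Theorem~\ref{thm6}, obtained exactly as you describe by reading off the $t=1$ case, $\mbox{reg}\,(I(D))=\sum_{x\in V(D)}w(x)-|E(D)|+1$, and substituting it back into the general formula. Your check that the hypotheses carry over via the phrase ``as Theorem~\ref{thm6}'' is all that is needed.
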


\medskip
The following  example shows  the assumption in Theorem \ref{thm6}
that $D$ is a vertex-weighted oriented unicyclic graph such that $w(x)\geq 2$ for any  $d(x)\neq 1$  cannot be dropped.
\begin{Example}  \label{example5}
Let $I(D)=(x_1x_2^2,x_2x_3^2,x_3x_4^2,x_4x_1^2,x_4x_5,x_5x_6,x_6x_7^2)$ be the edge ideal of an  oriented unicyclic graph, its  weight function is $w_1=w_2=w_3=w_4=w_7=2$ and $w_5=w_6=1$. Thus $w=2$. By using CoCoA, we obtain $\mbox{reg}\,(I(D)^2)=10$. But we have $\mbox{reg}\,(I(D)^2)=(\sum\limits_{i=1}^{7}w_i-|E(D)|+1)+(w+1)=9$  by Theorem \ref{thm6}.
\end{Example}

 The following example shows the regularity of  powers of
 edge ideals of  vertex-weighted  oriented  unicyclic graphs is related to
direction selection in Theorem \ref{thm6}.
\begin{Example}  \label{example6}
Let $I(D)=(x_1x_2^2,x_2x_3^2,x_3x_4^2,x_4x_1^2,x_4x_5^2,x_6x_5^2,x_6x_7^2)$ be the edge ideal of an oriented unicyclic graph, its  weight function is  $w_1=w_2=w_3=w_4=w_5=w_7=2$ and  $w_6=1$. Thus $w=2$. By using CoCoA, we obtain $\mbox{reg}\,(I(D)^2)=11$. But we have $\mbox{reg}\,(I(D)^2)=(\sum\limits_{i=1}^{7}w_i-|E(D)|+1)+(w+1)=10$  by Theorem \ref{thm6}.
\end{Example}

\medskip

\hspace{-6mm} {\bf Acknowledgments}

 \vspace{3mm}
\hspace{-6mm}  This research is supported by the National Natural Science Foundation of China (No.11271275) and  by foundation of the Priority Academic Program Development of Jiangsu Higher Education Institutions.

\end{document}